\theoremstyle{plain}
\newtheorem{maintheorem}{Theorem}
\newtheorem{theorem}{Theorem}
\newtheorem{lemma}[theorem]{Lemma}
\newtheorem{proposition}[theorem]{Proposition}
\numberwithin{theorem}{section}
\numberwithin{equation}{theorem}
\theoremstyle{definition}
\newtheorem{definition}[theorem]{Definition}
\newtheorem{example}[theorem]{Example}
\newtheorem{remark}[theorem]{Remark}
\newtheorem{question}[theorem]{Question}
\newtheorem*{question*}{Question}
\newcommand{\cwlt}{(\textup{cwlt})}
\newcommand{\INT}{\textup{int}}
\newcommand{\Z}{\mathbb{Z}}
\DeclareMathOperator{\ch}{char}
\DeclareMathOperator{\rk}{rk}
\DeclareMathOperator{\Af}{Af}
\DeclareMathOperator{\End}{End}
\DeclareMathOperator{\Aut}{Aut}
\DeclareMathOperator{\gr}{gr}
\DeclareMathOperator{\af}{af}
\DeclareMathOperator{\tr}{tr}
\DeclareMathOperator{\triangular}{tr} % for use as a subscript to Aut
\DeclareMathOperator{\op}{op}
\DeclareMathOperator{\reg}{reg}
\DeclareMathOperator{\pr}{pr}
\DeclareMathOperator{\GKdim}{GKdim}
\DeclareMathOperator{\Kdim}{Kdim}
\DeclareMathOperator{\Disc}{Disc}
\begin{document}

\title[Discriminant controls automorphism groups]
{The discriminant controls automorphism\\
groups of noncommutative algebras}

\author{S. Ceken, J. H. Palmieri, Y.-H. Wang and J. J. Zhang}

\address{Ceken: Department of Mathematics, Akdeniz University, 07058 Antalya,
Turkey}

\email{secilceken@akdeniz.edu.tr}

\address{Palmieri: Department of Mathematics, Box 354350,
University of Washington, Seattle, Washington 98195, USA}

\email{palmieri@math.washington.edu}

\address{Wang: Department of Applied Mathematics,
Shanghai University of Finance and
Economics, Shanghai 200433, China}

\email{yhw@mail.shufe.edu.cn}

\address{Zhang: Department of Mathematics, Box 354350,
University of Washington, Seattle, Washington 98195, USA}

\email{zhang@math.washington.edu}

\begin{abstract}
We use the discriminant to determine the automorphism groups of some
noncommutative algebras, and we prove that a family of noncommutative
algebras has tractable automorphism groups.
\end{abstract}

\subjclass[2010]{Primary 16W20, 11R29}

%16W20 (1991-now) Automorphisms and endomorphisms
%11R29   (1980-now) Class numbers, class groups, discriminants

\keywords{ automorphism group, discriminant, trace,
affine automorphism, triangular automorphism, locally nilpotent
derivation}

%\thanks{ }

\maketitle

%\tableofcontents

\setcounter{section}{-1}
\section{Introduction}
\label{xxsec0}

There is a long history and an extensive study of the
automorphism groups of algebras. Determining the full automorphism
group of an algebra is generally a notoriously difficult problem. For
example, the automorphism group of the polynomial ring of three
variables is not yet understood, and a remarkable  result in this direction
is given by Shestakov-Umirbaev \cite{SU} which shows the
Nagata automorphism is a wild automorphism. Since 1990s,
many researchers have been
successfully computing the automorphism groups of
interesting  infinite-dimensional noncommutative algebras,
including certain quantum
groups, generalized quantum Weyl algebras, skew polynomial rings and
many more -- see \cite{AlC, AlD, AnD, BJ, GTK, SAV},
which is only a partial list. Recently, by using a rigidity theorem
for quantum tori, Yakimov has proved the Andruskiewitsch-Dumas
conjecture and the Launois-Lenagan conjecture in \cite{Y1, Y2}, each of
which determines the automorphism group of a family of quantized
algebras with parameter $q$ being not a root of unity. A uniform approach
to both the Andruskiewitsch-Dumas conjecture and the Launois-Lenagan
conjecture  is provided in a  preprint by Goodearl-Yakimov \cite{GY}.
These beautiful results, as well as others, motivated us to look
into the automorphism groups of noncommutative algebras.

To warm up, let us consider an explicit example.
For the rest of the introduction, let $k$ be a field and let
$k^\times=k\setminus\{0\}$. For any integer $n\geq 2$,
let $W_n$ be the $k$-algebra generated by
$\{x_1,\dots,x_n\}$, subject to the relations $x_ix_j+x_jx_i=1$ for
all $i\neq j$. The action of the symmetric group $S_n$ on the set
$\{x_1,\dots,x_n\}$ extends to an action of $S_n$ on the algebra $W_n$,
and the map $x_i\mapsto -x_i$  determines an algebra automorphism
of $W_n$. Therefore $S_n\times \{\pm 1\}$ is a subgroup of
the full  automorphism group $\Aut(W_n)$ of the
$k$-algebra $W_n$.  We compute $\Aut(W_n)$ when $n$ is even.

\begin{maintheorem}\label{xxthm0.1}
Assume that $\ch k \neq 2$. If $n\geq 4$ is even, then
$\Aut(W_n)=S_n \times \{\pm 1\}$.
\end{maintheorem}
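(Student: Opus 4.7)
The plan is to apply the discriminant machinery of the paper to $W_n$. First, a direct calculation from $x_j x_i = 1 - x_i x_j$ shows that $x_i^2 x_j = x_i(1-x_j x_i) = x_i - x_i x_j x_i = (1-x_i x_j)x_i = x_j x_i^2$, so $x_i^2$ is central. Hence $Z_0 := k[x_1^2,\ldots,x_n^2] \subseteq Z(W_n)$, and a diamond-lemma argument gives that $W_n$ is a free $Z_0$-module of rank $2^n$, with basis the subset products $x_S = x_{i_1}\cdots x_{i_r}$ for $S = \{i_1 < \cdots < i_r\} \subseteq \{1,\ldots,n\}$. Equip $W_n$ with the total-degree filtration $\deg(x_i)=1$; since each relation has leading part $x_i x_j + x_j x_i$, the associated graded algebra is $\gr W_n = k_{-1}[x_1,\ldots,x_n]$, the $(-1)$-skew polynomial algebra. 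For $n$ even, $Z(\gr W_n) = k[x_1^2,\ldots,x_n^2]$, and lifting filtration-wise gives $Z(W_n) = Z_0$ as well.

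Next, compute $d := \Disc(W_n / Z_0) \in Z_0$ via the regular trace. By the leading-term passage between $W_n$ and $\gr W_n$, the dominating term of $d$ equals $\Disc(\gr W_n / Z_0)$. A direct computation on the $(-1)$-skew polynomial algebra (using that $\tr(x_S)=0$ for $S\neq\emptyset$ and tracking the anticommutation signs) gives, for $n$ even,
\[
\Disc(\gr W_n / Z_0) = c \prod_{i=1}^{n} (x_i^2)^{N}
\]
for some $c \in k^\times$ and some positive integer $N=N(n)$. Thus the dominating term of $d$ is a monomial in $Z_0$ whose irreducible factors are precisely $x_1^2,\ldots,x_n^2$.

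Let $\sigma \in \Aut(W_n)$. Then $\sigma$ restricts to an automorphism of $Z(W_n) = Z_0$ and fixes $d$ up to a scalar in $k^\times$. The discriminant-rigidity results proved earlier in the paper then force $\sigma$ to be filtered: $\sigma(F_d W_n) \subseteq F_d W_n$ for every $d$. The induced graded automorphism $\bar{\sigma}$ of $\gr W_n$ fixes $\Disc(\gr W_n / Z_0)$ up to a unit, and combined with the monomial factorization above, this forces $\bar{\sigma}(x_i) = \alpha_i x_{\pi(i)}$ for some $\pi \in S_n$ and $\alpha_i \in k^\times$. Lifting to $W_n$, we obtain $\sigma(x_i) = \alpha_i x_{\pi(i)} + \beta_i$ with $\beta_i \in F_0 W_n = k$.

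Substituting into $\sigma(x_i)\sigma(x_j) + \sigma(x_j)\sigma(x_i) = 1$ and using $x_{\pi(i)} x_{\pi(j)} + x_{\pi(j)} x_{\pi(i)} = 1$ yields
\[
\alpha_i\alpha_j + 2\beta_i\beta_j + 2\alpha_i\beta_j \, x_{\pi(i)} + 2\alpha_j\beta_i \, x_{\pi(j)} = 1.
\]
Since $\ch k \neq 2$ and $1, x_{\pi(i)}, x_{\pi(j)}$ are $Z_0$-linearly independent in $W_n$, we get $\beta_i = \beta_j = 0$ and $\alpha_i\alpha_j = 1$ for all $i\neq j$. Because $n \geq 4 \geq 3$, applying this to three distinct indices forces all $\alpha_i$ equal to a common $\alpha$ with $\alpha^2 = 1$, i.e.\ $\alpha = \pm 1$. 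Hence $\sigma(x_i) = \pm x_{\pi(i)}$ and $\sigma \in S_n \times \{\pm 1\}$; the reverse inclusion is obvious. The crux of the argument is the filtered step: verifying that every algebra automorphism respects the degree filtration, which is exactly what the ``discriminant controls automorphisms'' machinery of the paper provides. The hypothesis that $n$ is even is needed in the center/discriminant computation for $\gr W_n$: for $n$ odd the element $x_1 x_2 \cdots x_n$ becomes central, enlarging $Z(\gr W_n)$ and altering the monomial factorization used above.
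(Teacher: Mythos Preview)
Your overall strategy matches the paper's: show $W_n\in\Af$, invoke Theorem~\ref{xxthm0.3}(1) to get that every automorphism is affine, then classify affine automorphisms exactly as in Lemma~\ref{xxlem4.3}. The lifting argument and the final linear-algebra step are correct and essentially identical to the paper's.

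The genuine gap is in your discriminant step. You write ``By the leading-term passage between $W_n$ and $\gr W_n$, the dominating term of $d$ equals $\Disc(\gr W_n/Z_0)$,'' and then appeal to the paper's rigidity results. Two problems. First, passing to $\gr$ does not commute with $\det$: if $M=(m_{IJ})$ is the trace matrix, there is no general reason why the top-degree part of $\det M$ equals $\det(\gr m_{IJ})$, because off-diagonal permutation products can a priori cancel the diagonal contribution in top degree. The paper spends Lemma~\ref{xxlem4.6} and Proposition~\ref{xxpro4.8} establishing precisely that this cancellation does \emph{not} occur, by proving the finer statement that each $m_{IJ}$ with $I\neq J$ is component-wise less than both $\pr(m_{II})$ and $\pr(m_{JJ})$. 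Second, even granting that the leading term of $d$ is $c\prod_i(x_i^2)^{2^{n-1}}$, the rigidity input (Theorem~\ref{xxthm2.7} via Lemma~\ref{xxlem2.6}) requires $d$ to be \emph{dominating} in the sense of Definition~\ref{xxdef2.1}, and the sufficient criterion Lemma~\ref{xxlem2.2}(1) needs the full $\cwlt$ structure $d=c\prod_i(x_i^2)^{2^{n-1}}+\cwlt$, not merely knowledge of the top-degree homogeneous component. A lower-total-degree term that is not $\cwlt$ could, after substituting $y_i=g(x_i)$, overtake or cancel the leading term and spoil the degree inequality. So your shortcut through $\gr W_n$ does not, by itself, deliver what Theorem~\ref{xxthm2.7} needs; the $\cwlt$ analysis of Section~\ref{xxsec4} (in particular Theorem~\ref{xxthm4.9}(1)) is exactly the missing ingredient.
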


It is well-known that $\Aut(W_2)=S_2\ltimes k^\times$, see
\cite{AlD}.
If $n$ is odd or $\ch k=2$, then $\Aut(W_n)$ is
unknown and contains more automorphisms than $S_n\times \{\pm 1\}$:
see Example \ref{xxex5.12}.

Understanding the automorphism group of an algebra is fundamentally
important in general, and for the algebra $W_n$, is the first
step in the study of the invariant theory under group
actions \cite{CPWZ1}. The invariant theory of $W_2$ was studied in
\cite{CWWZ}, and \cite[Theorem 0.4]{CWWZ} applies to $W_2$ as $W_2$
is filtered Artin-Schelter regular of dimension 2. We have the following
for even integers $n\geq 4$.

\begin{maintheorem}\cite{CPWZ1} \label{xxthm0.2}
Assume that $\ch k \neq 2$.
Let $n$ be an even integer $\geq 4$ and $G$ be any group acting
on $W_n$. Then the fixed subring $W_n^G$ under the $G$-action is
filtered Artin-Schelter Gorenstein.
\end{maintheorem}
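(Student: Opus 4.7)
The plan is to reduce from the filtered algebra $W_n$ to its associated graded algebra, which is a $(-1)$-skew polynomial ring, and then apply the noncommutative invariant theory developed for Artin-Schelter regular algebras.

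First, Theorem \ref{xxthm0.1} gives $\Aut(W_n) = S_n \times \{\pm 1\}$, which is finite, so any group action on $W_n$ factors through this group and one may replace $G$ by a finite subgroup of $S_n \times \{\pm 1\}$. Equip $W_n$ with the ascending filtration in which each $x_i$ has degree one. Because the defining relation $x_i x_j + x_j x_i = 1$ has top-degree part $x_i x_j + x_j x_i$, the associated graded ring is $\gr W_n \cong k_{-1}[x_1, \ldots, x_n]$, the $(-1)$-skew polynomial ring, which is noetherian and AS regular of global dimension $n$. Each element $(\sigma, \epsilon) \in S_n \times \{\pm 1\}$ sends every $x_i$ to a scalar multiple of a generator, so it preserves the filtration and induces a graded action on $\gr W_n$ for which $\gr(W_n^G) = (\gr W_n)^G$.

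Next, a standard filtered-to-graded transfer principle shows that $W_n^G$ is filtered AS Gorenstein as soon as $(\gr W_n)^G$ is graded AS Gorenstein. The real work is therefore to prove that $k_{-1}[x_1,\ldots,x_n]^G$ is AS Gorenstein for every finite subgroup $G \leq S_n \times \{\pm 1\}$. For this one invokes the noncommutative analogues of the Chevalley--Shephard--Todd and Watanabe theorems. A direct computation using the canonical top-degree element $x_1 x_2 \cdots x_n$ yields $\mathrm{hdet}(\sigma) = \mathrm{sign}(\sigma)$ for $\sigma \in S_n$ (commuting the factors introduces one sign per inversion) and $\mathrm{hdet}(x_i \mapsto -x_i) = (-1)^n$. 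The hypothesis that $n$ is even makes the latter equal $+1$, which is the precise point at which the parity of $n$ enters the argument.

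The main obstacle will be the invariant-theoretic analysis at this final step. One must classify the quasi-reflections (both classical and ``mystic'') inside $S_n \times \{\pm 1\}$ acting on $k_{-1}[x_1,\ldots,x_n]$, factor out the subgroup $G'$ they generate, verify that $G/G'$ acts on $k_{-1}[x_1,\ldots,x_n]^{G'}$ with trivial homological determinant (this is the step where evenness of $n$ is used, together with the sign computation above), and then conclude that $k_{-1}[x_1,\ldots,x_n]^G$ is AS Gorenstein. Carrying out this case analysis uniformly in $G$, and handling the subtleties when $\ch k$ divides some factor of $|G|$, is the substantive content of \cite{CPWZ1}.
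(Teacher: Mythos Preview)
This theorem is not proved in the present paper. It is stated with a citation to the companion paper \cite{CPWZ1}, and the only comment the authors make here is that ``the proof of Theorem~\ref{xxthm0.2} is heavily dependent on the structure of $\Aut(W_n)$.'' There is therefore no proof in this paper against which to compare your proposal.

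That said, your outline is entirely consistent with that remark and with what one would expect \cite{CPWZ1} to contain: you use Theorem~\ref{xxthm0.1} to reduce to a finite group acting by filtration-preserving automorphisms, pass to the associated graded $(-1)$-skew polynomial ring (Lemma~\ref{xxlem4.1}(2)), and then invoke the noncommutative Watanabe and Chevalley--Shephard--Todd machinery from \cite{KKZ} and related work. Two places where your sketch would need care: the identification $\gr(W_n^G)=(\gr W_n)^G$ is automatic only when $|G|$ is invertible in $k$ (so a Reynolds-operator or spectral-sequence argument is needed otherwise), and the homological determinant of a permutation acting on $k_{-1}[x_1,\ldots,x_n]$ is not simply the sign --- the skew relations themselves contribute signs when one computes the action on the top $\operatorname{Ext}$, so the formula should be checked against \cite{KKZ} rather than read off from $x_1\cdots x_n$. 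You correctly identify that the substantive case analysis (quasi-reflections, the quotient $G/G'$, and bad characteristic) is what \cite{CPWZ1} must supply.
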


By Theorem \ref{xxthm0.2}, the $W_n$'s form a class of rings
with good homological properties under any group action. The proof
of Theorem \ref{xxthm0.2} is heavily dependent on the structure of
$\Aut(W_n)$.

As stated in the first sentence of \cite{Y1}, the automorphism group
of an algebra is often difficult to describe. For an algebra with many
generators, it is usually impossible to compute its automorphism
group directly. This leads us to consider the following question.

\begin{question*}
What invariants of an algebra control its automorphism group?
\end{question*}

This question has been implicitly asked by many authors, for example,
in the papers mentioned in the first paragraph of the introduction, and
different techniques have been used in the study of automorphism
groups. In this paper, we use the \emph{discriminant}.  When $n$ is
even, the discriminant of $W_n$ over its center is a non-unit element
of the center, and it is preserved by any algebra automorphism of
$W_n$. This is how we prove Theorem \ref{xxthm0.1}. Unfortunately,
when $n$ is odd or when the characteristic of $k$ is 2, the
discriminant of $W_n$ over its center is (conjecturally) trivial,
whence no useful
information can be derived from this invariant. This is one
reason why the form of $\Aut(W_n)$ is dependent on the parity of $n$
and $\ch k$.
%This phenomenon suggests that if the discriminant of an
%algebra $A$ over its center is trivial, or generally, not dominating
%[Definition \ref{xxdef2.1}], then we might expect that the automorphism
%group $\Aut(A)$ is not affine [Definition \ref{xxdef2.5}].

Our main theorem is an abstract version of Theorem \ref{xxthm0.1}.
Let $A$ be a filtered algebra with filtration $\{F_iA\}_{i\geq 0}$
such that the associated graded ring $\gr A$ is connected graded.
We say an automorphism $g\in \Aut(A)$ is
\emph{affine} if $g(F_1 A)\subset F_1 A$. Let $\Af$ be the
category of $k$-algebras $A$ satisfying the following conditions:
\begin{enumerate}
\item
$A$ is a filtered algebra such that the associated graded ring
$\gr A$ is a domain,
\item
$A$ is a finitely generated free module over its center $R$, and
\item
the discriminant $d(A/R)$ is dominating (see
Definition \ref{xxdef2.3}).
\end{enumerate}
The morphisms in this category are just isomorphisms of algebras.
Conditions (1) and (2) are easy to understand, while the terminology
in condition (3) will be defined in Sections \ref{xxsec1} and \ref{xxsec2}.
At this point we only mention that the algebras
$W_n$ are in $\Af$ when $n$ is even and that there are algebras such  that (1)
and (2) hold and (3) fails [Example \ref{xxex5.9}].

\begin{maintheorem}
\label{xxthm0.3} Let $A$ be in the category $\Af$.
In parts \textup{(3,4)}, assume that $\ch k=0$.
Let $R$ be the center of $A$. Then the following hold.
\begin{enumerate}
\item
Every automorphism $g$ of $A$ is affine.
\item
Every automorphism $h$ of the polynomial extension $A[t]$
is triangular. That is, there is a $g\in \Aut(A)$, $c\in k^\times$ and
$r\in R$ such that
\[
h(t)=ct+r \quad {\text{and}}\quad
h(x)=g(x)\in A \quad {\text{for all $x\in A$}}.
\]
In other words,
\[
\Aut(A[t])=\begin{pmatrix} \Aut(A)& R\\ 0& k^\times\end{pmatrix}.
\]
\item
Every locally nilpotent derivation (defined after
Lemma~\ref{xxlem3.2}) of $A$ is zero.
\item
$\Aut(A)$ is an algebraic group that fits into the exact sequence
\begin{equation}
 \label{0.3.1}\tag{*}
 1\to (k^\times)^r\to \Aut(A)\to S\to 1
\end{equation}
where $r\geq 0$ and $S$ is a finite group. In other words, $\Aut(A)=S\ltimes
(k^\times)^r$.
\end{enumerate}
\end{maintheorem}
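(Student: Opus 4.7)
The plan is to prove the four parts in order, using (1) as the foundation.

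For (1), the main input is that any automorphism $g\in\Aut(A)$ restricts to an automorphism of the center $R$, and because $g$ carries any $R$-basis of $A$ to another $R$-basis, it satisfies $g(d(A/R))=c\cdot d(A/R)$ for some unit $c\in R^\times$. The dominating hypothesis on $d(A/R)$ (Definition 2.3) should be precisely tailored so that such an identity prevents $g$ from raising filtration degrees: if $g$ sent some element of $F_1 A$ to something of strictly larger filtration degree, then the leading term of $g(d)$ in $\gr A$ would have strictly larger degree than the leading term of $d$, contradicting the equation above (here one uses that $\gr A$ is a domain so leading terms multiply without cancellation). Hence $g(F_1A)\subseteq F_1A$, i.e., $g$ is affine.

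For (2), the strategy is to apply (1) to $A[t]$. Equip $A[t]$ with the filtration $F_i A[t]=\sum_{j\le i} F_{i-j}A \cdot t^j$; then $\gr A[t]\cong (\gr A)[t]$ is still a domain, the center is $R[t]$, and $A[t]$ is free over $R[t]$ with the same basis, so $d(A[t]/R[t])=d(A/R)$, which inherits the dominating property. By (1), any $h\in\Aut(A[t])$ is affine, so $h(t)\in F_1(A[t])=F_1A+kt$ and $h(x)\in F_1A+kt$ for $x\in F_1 A$. Combining the affineness with the fact that $h$ must preserve the center $R[t]$ and be a $k$-algebra homomorphism quickly forces the coefficient of $t$ in $h(x)$ to vanish for $x\in A$, and $h(t)=ct+r$ with $c\in k^\times$ and $r\in R$; the restriction $h|_A$ is then an automorphism of $A$, giving the triangular form.

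For (3), given a locally nilpotent derivation $\delta$ of $A$, the formula $\exp(t\delta)(a)=\sum_{n\ge 0} \delta^n(a)t^n/n!$, together with $t\mapsto t$, defines an automorphism of $A[t]$ (well-defined in characteristic zero because $\delta$ is locally nilpotent). By (2), this automorphism is triangular; in particular $\exp(t\delta)(a)$ must lie in $A$ for every $a\in A$, forcing $\delta^n(a)=0$ for all $n\ge 1$ and therefore $\delta=0$.

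Finally for (4), by (1) the group $\Aut(A)$ embeds into $\mathrm{GL}(F_1A)$ as a closed subgroup (cut out by the algebraic conditions of preserving the defining relations), hence is an affine algebraic group. Its Lie algebra consists of derivations, and any one-parameter unipotent subgroup $\mathbb{G}_a\to\Aut(A)$ would differentiate to a nonzero locally nilpotent derivation, contradicting (3). Thus $\Aut(A)^\circ$ contains no $\mathbb{G}_a$'s, so it is a torus $(k^\times)^r$, and $S:=\Aut(A)/\Aut(A)^\circ$ is finite; producing a lift of $S$ (using that a finite subgroup of automorphisms can be chosen to normalize any chosen maximal torus, by the conjugacy of maximal tori) gives the semidirect product description. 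I expect the main obstacle to be (1), namely extracting the filtration control from the dominating hypothesis on $d(A/R)$; once this is in place, (2)--(4) are essentially formal, modulo the verification that $A[t]$ remains in $\Af$.
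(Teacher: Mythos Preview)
Your arguments for parts (1), (3), and (4) are essentially correct and match the paper's proofs (Theorem~2.7, Lemma~3.3, Lemma~3.4 respectively).

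The gap is in part (2). Your plan is to show $A[t]\in\Af$ and then invoke (1), but $A[t]$ is \emph{not} in $\Af$: the discriminant $d(A[t]/R[t])=d(A/R)$ does not involve $t$, so it cannot be dominating with respect to the generating set $\{x_1,\dots,x_n,t\}$. Concretely, for any $r\in R$ of filtration degree $\ge 2$, the map fixing $A$ and sending $t\mapsto t+r$ is an automorphism of $A[t]$ that is \emph{not} affine for the filtration you wrote down (indeed, such maps are exactly the triangular automorphisms you are trying to characterize). So applying (1) to $A[t]$ directly fails, and your subsequent claim that $h(t)\in F_1A+kt$ is false.

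The paper's fix (Lemma~3.2) is to use the dominating property of $d(A/R)$ only for the $n$ generators $x_1,\dots,x_n$, testing inside $T=A[t]$ equipped with a \emph{rescaled} filtration in which $\deg'(t)=2$ while $\deg'(x_i)=1$. Since the $x_i$ are noncentral, $h(x_i)\notin R[t]$, so $\deg' h(x_i)\ge 1$; the dominating hypothesis then forces $\deg' h(x_i)\le 1$, which in the rescaled filtration means $h(x_i)\in Y\oplus k$ (no $t$-component at all). This gives $h|_A\in\Aut(A)$. The shape of $h(t)$ is then handled by a separate elementary argument comparing $t$-degrees in $h\circ h^{-1}(t)=t$ (Theorem~3.5(1) in the paper), not by any dominating property.
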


If $\ch k\neq 0$, part (3) of the above could fail, see Example \ref{xxex3.9}.
Note that parts (3,4) are consequences of part (2) [Lemmas \ref{xxlem3.3}(2)
and \ref{xxlem3.4}].
Part (3) suggests that the discriminant controls locally
nilpotent derivations too.
Part (4) gives a structure theorem for $\Aut(A)$.
The integer $r$ is called the \emph{symmetry rank} of $A$,
denoted by $sr(A)$;
and the order $|S|$ is called the \emph{symmetry index} of
$A$, denoted by
$si(A)$. For example, Theorem \ref{xxthm0.1} says that,
when $n\geq 4$ is even, $sr(W_n)=0$
and $si(W_n)=2n!$.

Theorem \ref{xxthm0.3}(1) provides a uniform approach to the automorphism
groups of all algebras in $\Af$. There are many algebras in the
category $\Af$ [Section \ref{xxsec5}]. For example, if $A$ is a
PI skew polynomial ring
$k_{p_{ij}}[x_1,\dots,x_n]$ such that (a) $x_i$ is not in the center
of $A$ for all $i$ and (b) $A$ is free over its center, then $A$ is in $\Af$
\cite{CPWZ2}. Here a PI algebra  means an algebra satisfying a
\emph{polynomial identity} \cite[Chapter 13]{MR}.
The category $\Af$ also has the nice property that it is closed
under the tensor product [Theorem \ref{xxthm5.5}].

As we will see below, the discriminant method has limitations. An
immediate one is that we need to assume the existence of a ``good'' trace
function, and this does not exist for a general noncommutative
algebra -- see Example \ref{xxex1.10}.

In the sequel \cite{CPWZ2} we develop other techniques
for computing discriminants and automorphism groups. One major goal of
that paper is to work with algebras which are not free over their
centers. We also deal with algebras $B$ of the following form. First,
let $A_q$ be the $q$-quantum Weyl algebra
generated by $x$ and $y$ subject to the relation $yx=qxy+1$ for some
$q\in k^\times$ (we assume that $q\neq 1$, but $q$ need not be a root
of unity). Consider the tensor product $B:=A_{q_1}\otimes \cdots
\otimes A_{q_m}$ of quantum Weyl algebras, where $q_i\in k^\times
\setminus \{1\}$ for all $i$.  Since we are
not assuming that the $q_i$ are roots of unity, $B$ need not be in $\Af$;
however, the conclusion of Theorem \ref{xxthm0.3} holds for $B$:

\begin{maintheorem}\cite{CPWZ2}
\label{xxthm0.4} Let $B=A_{q_1}\otimes \cdots \otimes A_{q_m}$ and
assume that $q_i\neq 1$ for all $i=1,\dots,m$.
\begin{enumerate}
\item
The automorphism group $\Aut(B)$ is an algebraic group that
fits into an exact sequence of the form \eqref{0.3.1}.
\item
The automorphism group of $B[t]$ is triangular, namely,
\[
\Aut(B[t])=\begin{pmatrix} \Aut(B) & C(B)\\0& k^\times\end{pmatrix}
\]
where $C(B)$ is the center of $B$.
\item
If $\ch k=0$,
then every locally nilpotent derivation of $B$ is zero.
\end{enumerate}
\end{maintheorem}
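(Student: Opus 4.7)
Since the parameters $q_i$ are not assumed to be roots of unity, each $A_{q_i}$ (hence $B$) need not be PI, so in general $B$ will not lie in $\Af$ and Theorem \ref{xxthm0.3} does not apply directly. The plan is to prove part (2) of Theorem \ref{xxthm0.4} first, and then to deduce parts (1) and (3) from it by the same formal mechanism used to pass from Theorem \ref{xxthm0.3}(2) to parts (4) and (3) of that theorem. Specifically, once $\Aut(B[t])$ is triangular in the sense of part (2), the algebraic-group/exact-sequence structure of $\Aut(B)$ in part (1) should be a direct reading of the top-left block, as in Lemma \ref{xxlem3.4}; and in characteristic $0$, any nonzero locally nilpotent derivation $\delta$ of $B$ would give rise to the non-triangular automorphism $\exp(t\delta)$ of $B[t]$, contradicting part (2), which is the content of Lemma \ref{xxlem3.3}(2) applied to $B$.

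For (2), I would equip $B$ with its natural $\mathbb{Z}_{\geq 0}$-filtration where $\deg x_i = \deg y_i = 1$. The associated graded ring $\gr B$ is then the quantum affine $2m$-space $\bigotimes_{i=1}^{m} k_{q_i}[x_i, y_i]$, with generators from different tensor factors commuting. The key technical step will be to show that every automorphism $h$ of $B[t]$ is filtered with respect to the extension of this filtration by $\deg t = 1$, so that $\gr h$ is a graded automorphism of $(\gr B)[t]$. Because each $q_i \neq 1$, the graded ring $\gr B$ is rigid enough --- in the spirit of the quantum-torus rigidity results of \cite{Y1, Y2, GY} --- that any graded automorphism of $(\gr B)[t]$ must itself be toric and triangular. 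Lifting this information via the filtration should give $h(t) = ct + r$ with $c \in k^\times$, $r \in C(B)$, and $h|_B \in \Aut(B)$.

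The main obstacle will be the filtration-preservation step, because the discriminant technique of Section~\ref{xxsec2} is unavailable when $B$ is not a finitely generated module over its center $C(B)$: there is no discriminant $d(B/C(B))$ to which Definition \ref{xxdef2.3} applies. I expect that in \cite{CPWZ2} this gap is bridged by a different invariant constructed intrinsically from the ring structure of $B$ --- for instance, one built from inner derivations, commutator ideals, or a suitably chosen subalgebra on which automorphisms must act predictably --- and that this invariant detects the filtration degree of the generators $x_i, y_i$ strongly enough to force $h$ to be filtered. Once such a substitute is in place, the descent to $(\gr B)[t]$, the isolation of the scalar and translational parts of $h(t)$, and the verification that $h|_B$ restricts to an automorphism of $B$ should proceed along the same lines as in Section \ref{xxsec3} of the present paper.
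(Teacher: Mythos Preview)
This theorem is not proved in the present paper: it is announced as a result of the sequel \cite{CPWZ2}, and no argument appears here beyond the remark, a few paragraphs after the statement, that ``mod $p$ reduction (to be discussed in the sequel \cite{CPWZ2}) reduces the non-PI case (with appropriate parameters) to the PI case.'' So there is no proof in this paper against which to check your argument line by line.

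That said, the strategy the paper advertises is not the one you propose. You suggest staying in the given base ring and replacing the discriminant by some other intrinsic invariant --- inner derivations, commutator ideals, or quantum-torus rigidity in the style of \cite{Y1,Y2,GY} --- in order to force filtration-preservation for automorphisms of $B[t]$. The route indicated here is instead still a discriminant argument: when each $q_i$ is a root of unity, $A_{q_i}$ lies in $\Af$ (see the list following Proposition \ref{xxpro5.7}) and Theorem \ref{xxthm5.5} puts $B$ in $\Af$, so Theorem \ref{xxthm0.3} applies directly; for general $q_i$, one passes to positive characteristic so that the parameters become roots of unity and the PI/discriminant machinery is again available, then lifts the conclusion back. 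Your formal skeleton --- prove (2), then obtain (3) via Lemma \ref{xxlem3.3}(2) and (1) via Lemma \ref{xxlem3.4} once $\Aut(B)$ is known to be affine --- matches the mechanism of Section \ref{xxsec3} and is fine. The substantive divergence is in how the filtration-preservation step is established; on that point your proposal is explicitly speculative, and your guess about what \cite{CPWZ2} does (a new non-discriminant invariant) does not match the hint given here (discriminants plus mod $p$ reduction).
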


Two explicit examples are given in \cite{CPWZ2}.
Let $B$ be as in Theorem \ref{xxthm0.4}.
\begin{enumerate}
\item
If $q_i\neq \pm1$ and $q_i\neq q_j^{\pm 1}$ for all $i\neq j$, then
$\Aut(B)=(k^\times)^m$.
\item
If $q_i=q\neq \pm 1$ for all $i$, then $\Aut(B)=S_m \ltimes (k^\times)^m$.
\end{enumerate}

Theorem \ref{xxthm0.4} also holds for the tensor products of $A_{q}$'s
with $W_{n}$'s (for $n$ even), as well as with many others in $\Af$.

We would like to remark that most results in the
literature (including the papers mentioned
at the beginning of the introduction) calculate the automorphism
group of non-PI algebras, or algebras with a parameter $q$
(or multi-parameters) not being a root of unity. In general
it is more difficult to compute the
automorphism group in the PI case, or when $q$ is a root of
unity.  Our method deals with both the PI and non-PI
cases. Theorem \ref{xxthm0.3} works  for the PI case, and
then mod $p$ reduction (to be discussed in the sequel \cite{CPWZ2})
reduces the non-PI case (with appropriate parameters) to the PI case.

The definition of the discriminant is purely linear algebra, but
the computation of the discriminant seems to be very difficult and
tedious in general.  In this paper we only (partially) compute one
nontrivial example that is needed in the proof of Theorem \ref{xxthm0.1}.
It would be nice to develop basic theory and
computational tools for the discriminant in the
noncommutative setting.

The paper is laid out as follows. In Section~\ref{xxsec1}, we recall
the notion of the discriminant, and we establish some of its basic
properties. In Sections~\ref{xxsec2} and \ref{xxsec3}, we discuss
so-called ``affine''
and ``triangular'' automorphisms and prove Theorem
\ref{xxthm0.3}. The discriminant computation of $W_n$ over its
center occupies a major part of Section 4
and Theorem \ref{xxthm0.1} is proved near the end of Section 4.
In Section 5 we give comments, remarks, and examples related to
the category $\Af$.

\section{Discriminant in the noncommutative setting}
\label{xxsec1}

Throughout let $k$ be a commutative domain. Modules (sometimes called
vector spaces), algebras and morphisms are over $k$.

According to \cite{GKZ}, the discriminant for polynomials was
introduced by Cayley in 1848. Since then, it has been important in
number theory (Galois theory) and algebraic geometry.
In this section, we discuss the
concept of the discriminant in the noncommutative setting. Let $R$ be
a commutative algebra and let $B$ and $F$ be algebras both of which
contain $R$ as a subalgebra. In applications, $F$ would be either $R$
or the ring of fractions of $R$.

\begin{definition}
\label{xxdef1.1} An $R$-linear map $\tr: B\to F$ is called a
\emph{trace map} if $\tr(ab)=\tr(ba)$ for all $a,b\in B$.
\end{definition}

Here are some examples.

\begin{example}
\label{xxex1.2}
\begin{enumerate}
\item
Let $B=M_n(R)$. The \emph{internal trace} $\tr_{\INT}: B\to R$ is defined
to be the usual matrix trace, namely, $\tr_{\INT}((r_{ij}))=
\sum_{i=1}^n r_{ii}$.
\item
Let $B$ be a subalgebra of $M_n(F)$ and $R$ a subalgebra of
$F\cap B\subset M_n(F)$. The composition $\tr: B\to M_n(F)
\xrightarrow{\tr_{\INT}} F$ is a trace map from $B$ to $F$.
\item
Let $B$ be an $R$-algebra and $F$ be a commutative $R$-subalgebra of
$B$ such that $B_F:=B\otimes_R F$
is finitely generated free over $F$. Then left
multiplication defines a natural embedding of $R$-algebras $lm: B\to
\End_F(B_F)\cong M_n(F)$ where $n$ is the rank $\rk(B/F)$. By
part (2), we obtain a trace map, called the \emph{regular trace},
by composing: $\tr_{\textup{reg}}: B\xrightarrow{lm}
M_n(F)\xrightarrow{\tr_{\INT}} F$.
\end{enumerate}
\end{example}

Although we are going to mainly use the regular trace in this paper,
the definition of the discriminant works for any trace map. From now
on, assume that $F$ is a commutative algebra. Let $R^\times$ be the
set of units in $R$. For any $f,g\in R$, we use the notation
$f=_{R^{\times}} g$ to indicate that $f=cg$ for some $c\in R^{\times}$.
The following definition can be found in Reiner's book \cite{Re}.

\begin{definition}
\label{xxdef1.3} Let $\tr: B\to F$ be a trace map and $w$ be a fixed
integer. Let $Z:=\{z_i\}_{i=1}^w$ be a subset of $B$.
\begin{enumerate}
\item
The \emph{discriminant} of $Z$ is defined to be
\[
d_w(Z:\tr)=\det(\tr(z_iz_j))_{w\times w}\in F.
\]
\item \cite[Section 10, p.~126]{Re}.
The \emph{$w$-discriminant ideal} (or \emph{$w$-discriminant
$R$-module}) $D_w(B,\tr)$ is the $R$-submodule of $F$ generated by
the set of elements $d_w(Z:\tr)$ for all $Z=\{z_i\}_{i=1}^w\subset
B$.
\item
Suppose $B$ is an $R$-algebra which is finitely generated free over
$R$. If $Z$ is an $R$-basis of $B$, the \emph{discriminant} of $B$ is
defined to be
\[
d(B/R)=_{R^\times} d_w(Z:\tr).
\]
\item
We say the discriminant (respectively, discriminant ideal) is
\emph{trivial} if it is either 0 or 1
(respectively, it is either the zero ideal or contains 1).
\end{enumerate}
\end{definition}

The following well-known proposition establishes some basic properties of
the discriminant, including that $d(B/R)$ is independent of the
choice of $Z$.

\begin{proposition}
\label{xxpro1.4} Let $\tr: B\to R$ be an $R$-linear trace map (so $F=R$).
Let $Z:=\{z_i\}_{i=1}^w$ be a set of elements in $B$.
\begin{enumerate}
\item
\cite[p.66, Exer. 4.13]{Re} Suppose that $Y=\{y_j\}_{j=1}^w$ such that
$y_i=\sum_j r_{ij} z_j$ where $r_{ij}\in R$, and denote the matrix
$(r_{ij})_{w\times w}$ by $(Y:Z)$. Then
\[
d_w(Y:\tr)=\det(Y:Z)^2 d_w(Z:\tr).
\]
\item
If both $Y$ and $Z$ are $R$-linear bases of $B$, then
\[
d_w(Y:\tr)=_{R^{\times}}d_w(Z:\tr).
\]
As a consequence $d(B/R)$ is well-defined up to a scalar in
$R^\times$.
\item
\cite[Theorem 10.2]{Re} If $B$ is an $R$-algebra which is finitely
generated free over $R$ with an $R$-basis $Z$, then $D_w(B,\tr)$ is
the principal ideal of $R$ generated by $d_w(Z:\tr)$ or equivalently
by $d(B/R)$.
\end{enumerate}
\end{proposition}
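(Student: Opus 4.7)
The plan is to reduce all three parts to a single matrix identity in part (1), from which (2) and (3) follow formally.

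For part (1), I would expand the trace directly: using $R$-linearity of $\tr$ together with the product expansion $y_i y_j = \sum_{k,l} r_{ik} r_{jl}\, z_k z_l$, one gets
\[
\tr(y_i y_j) = \sum_{k,l} r_{ik}\, \tr(z_k z_l)\, r_{jl}.
\]
This is exactly the $(i,j)$-entry of the matrix product $P \cdot T \cdot P^{T}$, where $P = (Y:Z) = (r_{ij})$ and $T = (\tr(z_k z_l))$. Taking determinants and using $\det(P^{T}) = \det(P)$ yields $d_w(Y:\tr) = \det(Y:Z)^2\, d_w(Z:\tr)$.

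For part (2), if $Y$ and $Z$ are both $R$-bases of the free module $B$, then $(Y:Z)$ and $(Z:Y)$ lie in $M_w(R)$ and are mutual inverses, so $\det(Y:Z) \in R^\times$. Its square is again a unit, and part (1) then gives $d_w(Y:\tr) =_{R^\times} d_w(Z:\tr)$. In particular, $d(B/R)$ is well-defined up to a unit of $R$, independent of the chosen basis.

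For part (3), note that $F = R$ here, so $D_w(B,\tr)$ is an $R$-submodule of $R$, i.e.\ an ideal. Given any $w$-element subset $Z' = \{y_1, \dots, y_w\} \subset B$, write each $y_i = \sum_j r_{ij} z_j$ using the fact that $Z$ is an $R$-basis; then part (1) gives $d_w(Z':\tr) = \det(r_{ij})^2\, d_w(Z:\tr) \in (d_w(Z:\tr))$. Since every generator of $D_w(B,\tr)$ has this form, $D_w(B,\tr) \subseteq (d_w(Z:\tr))$. The reverse containment is immediate because $d_w(Z:\tr)$ is itself one of the generators, and the equivalent description with $d(B/R)$ in place of $d_w(Z:\tr)$ follows from part (2).

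The only substantive computation is the matrix identity in part (1); the real content is the observation that the trace of a product transforms bilinearly under change of basis, producing the characteristic \emph{square} of a determinant. Parts (2) and (3) are then purely formal consequences, so I anticipate no real obstacle.
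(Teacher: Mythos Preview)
Your proposal is correct and matches the standard argument: the paper itself cites Reiner for parts (1) and (3) and only remarks that (2) is an immediate consequence of (1), which is exactly what you do. Your matrix identity $(\tr(y_iy_j)) = P\,T\,P^{T}$ and the derivation of (3) from (1) are precisely the content behind those citations, so there is nothing to add.
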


\begin{proof} (2) is an immediate consequence of (1).
\end{proof}

Here are some simple examples. The first two indicate the connection
with the classical theory and third one is relevant to Theorem \ref{xxthm0.1}.

\begin{example}
\label{xxex1.5}
If $f$ is a monic polynomial, then its discriminant $\Disc(f)$
is classically defined to be
the product of the differences of the roots.
If $f$ is the minimal polynomial of an algebraic number
$\alpha$, it is well-known that
$d({\mathbb Z}[\alpha]/{\mathbb Z})=\Disc(f)$,
see \cite[pp. 66-67, Exer. 414 and Theorem 4.35]{Re},
or \cite[Theorem 6.4.1]{AW}, or \cite[Definition 6.2.2 and Remark 6.2.3]{St}.
\end{example}

\begin{example}
\label{xxex1.6}
Let $B = M_n(R)$. A word of caution: we are using the regular trace
map, not the internal trace map, to compute the discriminant. If we
use the basis $Z = \{e_{ij}\mid 1\leq i,j \leq n\}$ of matrix units, then we have
\[
e_{ij} e_{kl} = \begin{cases}
e_{il} & \text{if $j=k$}, \\
0 & \text{else}.
\end{cases}
\]
So we need to compute the regular trace of the matrix $e_{il}$: we compute
the trace of the matrix giving its action by left multiplication on
$M_n(R)$. Diagonal entries in that matrix arise when $e_{il} e_{jk}$
is a scalar multiple of $e_{jk}$, which can only happen when $i=l=j$;
in this case, there are $n$ diagonal entries, each of which is 1, so
\[
\tr_{\textup{reg}}(e_{ij} e_{kl}) = \begin{cases}
n & \text{if $i=l$ and $j=k$},\\
0 & \text{otherwise}.
\end{cases}
\]
Therefore $d_{n^{2}}(Z : \tr) = \pm n^{n^{2}}$.
\end{example}

\begin{example}
\label{xxex1.7}
\begin{enumerate}
\item
Let $B=W_2=k\langle x,y\rangle/(xy+yx-1)$ and let
$R=k[x^2,y^2]\subset B$. Then it is easy to check that $R$ is the
center of $B$
and $B=R\oplus Rx\oplus Ry\oplus Rxy$. Using the
regular trace $\tr$, one sees that
\[
\tr(1)=4, \quad
\tr(x)=0, \quad
\tr(y)=0, \quad
\tr(xy)=2.
\]
Using these traces and the fact $\tr$ is $R$-linear, we have the
matrix
\[
(\tr(z_iz_j))_{4\times 4}=\begin{pmatrix} 4 &0 &0 &2 \\
0& 4 x^2 &2&0\\ 0&2&4y^2&0\\2&0&0&2-4x^2y^2
\end{pmatrix}
\]
where $Z=\{z_1,z_2,z_3,z_4\}=\{1,x,y,xy\}$, and therefore the
discriminant of $d(B/R)$ is the determinant of the matrix
$(\tr(z_iz_j))_{4\times 4}$, which is, by a direct computation, $-2^4
(4x^2y^2-1)^2$.
\item
Let $C$ be the skew polynomial ring $k_{-1}[x,y]:=k\langle x,y\rangle/(xy+yx)$.
A similar computation shows that the discriminant of $C$ over its center
$R=k[x^2,y^2]$ is $-2^8 x^4y^4$. The details are left to the reader.
\end{enumerate}
\end{example}

Now we consider the case when $B$ contains a central subalgebra $R$.
Assume that $F$ is a localization of $R$ such that $B_F:=B\otimes_R F$ is
finitely generated free over $F$. For example, if $B_R$ is free, we
may take $F=R$, and if not, we may take $F$ to be the field of
fractions of $R$ (assuming $R$ is a domain).
We let $\tr_{\reg}: B\to F$ denote
the regular trace defined in Example \ref{xxex1.2}(3), namely,
\begin{equation}\label{1.7.1}\tag{1.7.1}
\tr_{\reg}: B\to B_F \xrightarrow{lm} \End_F (B_F)\xrightarrow{\tr_{\INT}} F.
\end{equation}
We also simply write $\tr$ for $\tr_{\reg}$ since this is used
most of the time.
For any algebra $B$, let $\Aut(B)$ denote the full algebra
automorphism group of $B$ over the base ring. If $C$ is a
central subalgebra of $B$, the subgroup of automorphisms
which fix $C$ is denoted $\Aut_C(B)$.
We say that an element $g \in \Aut (B)$
\emph{preserves} a subalgebra $A$ of $B$ if $g(A) \subseteq A$. Note
that if $g$ preserves $R$, then $g$ preserves any localization of
$R$, and in particular, it preserves $F$. We also note that, in case
$R$ is the center of $B$, any automorphism will preserve it.

\begin{lemma}
\label{xxlem1.8} Fix $g\in \Aut(B)$ such that $g$ and $g^{-1}$ preserve
$R$ and let $w=\rk(B_F/F)$. Let $x$ be an element in $B$.
\begin{enumerate}
\item
For any $F$-basis $Z=\{z_i\}_{i=1}^w$ of $B_F$, if $x z_i=
\sum_j r_{ij} z_j$ for some $r_{ij}\in F$,
then $\tr(x)=\sum_{i=1}^w r_{ii}$.
\item
$g(\tr(x))=\tr(g(x))$ for any $x\in B$.
\item
$g(d_w(Z:\tr))=d_w(g(Z):\tr)$ for any set $Z=\{z_i\}_{i=1}^w$.
\item
The discriminant $R$-module $D_w(B,\tr)$ is $g$-invariant.
\item
Suppose the image of $\tr$ is in $R$ and consider the trace map
$\tr: B\to R$. Then the discriminant ideal $D_w(B,\tr)$ is
$g$-invariant.
\item
If $B$ is finitely generated free over $R$, then the discriminant
$d(B/R)$ is a $g$-invariant up to a unit of $R$.
\end{enumerate}
\end{lemma}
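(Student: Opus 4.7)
The plan is to work through the six parts in order, with each part building on the previous ones. The key conceptual ingredient is that the hypothesis ``$g$ and $g^{-1}$ both preserve $R$'' makes $g|_R$ an honest automorphism of $R$, which then extends uniquely to an automorphism of the localization $F$, and hence to an automorphism $\tilde g$ of $B_F = B\otimes_R F$. Once this extension is set up, the rest is essentially bookkeeping.

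First I would handle (1), which is immediate from the construction of $\tr_{\reg}$ in \eqref{1.7.1}: the matrix of left multiplication by $x$ with respect to the basis $Z$ is precisely $(r_{ij})$, so its internal matrix trace is $\sum_i r_{ii}$. For (2), I would take any $F$-basis $Z = \{z_i\}$ of $B_F$ and write $xz_i = \sum_j r_{ij}z_j$, then apply $\tilde g$ to obtain $g(x)\tilde g(z_i) = \sum_j \tilde g(r_{ij})\tilde g(z_j)$. Since $\tilde g(Z)$ is again an $F$-basis of $B_F$, part (1) applied to $g(x)$ in the basis $\tilde g(Z)$ yields $\tr(g(x)) = \sum_i \tilde g(r_{ii}) = g(\tr(x))$, where the last equality uses that $\tilde g$ restricts to $g$ on $R$.

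Part (3) is then a one-line computation: $g(d_w(Z:\tr)) = g(\det(\tr(z_iz_j))) = \det(g(\tr(z_iz_j))) = \det(\tr(g(z_i)g(z_j))) = d_w(g(Z):\tr)$, using that ring homomorphisms commute with determinants and part (2). Parts (4) and (5) follow by noting that $g$ permutes the $w$-element subsets of $B$ bijectively, so $\{g(d_w(Z:\tr))\}_{Z} = \{d_w(g(Z):\tr)\}_{Z}$ is the same generating set as $\{d_w(Z:\tr)\}_Z$; combined with $g(R)=R$, this shows $g(D_w(B,\tr)) = D_w(B,\tr)$, whether viewed as an $R$-submodule of $F$ or an ideal of $R$. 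For (6), in the free case any $R$-basis $Z$ has $g(Z)$ again an $R$-basis, so Proposition~\ref{xxpro1.4}(2) gives $d_w(g(Z):\tr) =_{R^\times} d_w(Z:\tr) =_{R^\times} d(B/R)$, while part (3) identifies the left-hand side with $g(d(B/R))$.

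There is no serious obstacle here; the only delicate point is verifying that $g$ extends to an automorphism $\tilde g$ of $B_F$ that is compatible with the trace, and this is exactly what the hypothesis on $g^{\pm 1}$ preserving $R$ guarantees. Everything else is an application of the definition of the regular trace plus the observation that automorphisms commute with the formation of determinants.
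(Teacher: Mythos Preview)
Your proof is correct and follows essentially the same approach as the paper's: part (1) is the definition, part (2) uses that $g$ carries an $F$-basis to an $F$-basis, part (3) is the determinant computation via (2), and parts (4)--(5) come from (3) and the bijectivity of $g$ on $w$-tuples. The only cosmetic difference is in part (6): you invoke Proposition~\ref{xxpro1.4}(2) directly on the two $R$-bases $Z$ and $g(Z)$, whereas the paper instead cites Proposition~\ref{xxpro1.4}(3) to identify $D_w(B,\tr)$ as the principal ideal $(d(B/R))$ and then uses the $g$-invariance of that ideal from parts (4)--(5); both routes are equally short and valid.
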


\begin{proof} (1) This is the definition of trace, noting that
$\tr_{\INT}$ is independent of the choices of basis $Z$.

(2) If $Z=\{z_i \}_{i=1}^w$ is an $F$-basis, so is
$Y=\{g(z_i)\}_{i=1}^w$ by linear algebra. So by part (1),
we may use $Y$ to compute $\tr$. Applying $g$ to $xz_i$ we have
$g(x)g(z_i)=\sum_j g(r_{ij}) g(z_j)$. Since $g$ preserves $R$, we obtain
$\tr(g(x))=\sum_{i=1}^w  g(r_{ii})=g(\tr(x))$.

(3) This follows from part (2), the definition of $d_w(Z:\tr)$ and
an easy computation.

(4) It follows from part (3) and the definition that $g(D_w(B,\tr))
\subset D_w(B,\tr)$. Since $g$ and $g^{-1}$ are automorphisms, we have
$g(D_w(B,\tr))= D_w(B,\tr)$.

(5) This is a consequence of (4).

(6) By Proposition \ref{xxpro1.4}(3), $D_w(B,\tr)$ is a principal
ideal generated by $d(B/R)$. Since $g$ preserves $D_w(B,\tr)$,
$g(d(B/R))=cd(B/R)$ for some $c\in R^\times$.
\end{proof}

We conclude this section with a well-known observation.

\begin{example}
\label{xxex1.10} Let $k$ be a field. Let $A_1$ be the first Weyl
algebra, the algebra generated by $x$ and $y$ subject to the relation
$yx=xy+1$.

Assume first that $\ch
k=0$. Let $B$ be an algebra and let $\tr: A_1\to B$ be any additive
map such that $\tr(ab)=\tr(ba)$ for all $a,b\in A_1$.
Then $\tr(A_1)=0$, as every element
in $A_1$ can be written as $ya-ay$ for some $a\in A_1$ -- for any $m,
n \geq 0$ and any $c \in k$, we have
\[
c x^my^n = y\left(\frac{c}{m+1}x^{m+1}y^n\right) -
\left(\frac{c}{m+1}x^{m+1}y^n\right)y.
\]
So there is no nontrivial trace map from $A_1$ to any algebra.

If $\ch k=p>0$, then $A_1$ is a finitely generated free module
over its center $R:=k[x^p,y^p]$. A direct computation shows
that the regular trace $\tr: A_1\to R$ is the zero map in this case.
\end{example}

\section{Dominating elements and  automorphisms}
\label{xxsec2}

In this section, we establish tools for identifying and constructing
certain algebra automorphisms, called ``affine'' and ``triangular''
automorphisms. In
the situation of Theorem~\ref{xxthm0.1}, we can show that every
automorphism is affine -- see Section~\ref{xxsec4} -- and this allows
us to prove the theorem.

The main result in this section is Theorem~\ref{xxthm0.3}(1). To state and
prove it, we need the concept of a ``dominating element,'' which we
now develop.

Let $A$ be an algebra over $k$.
We say $A$ is \emph{connected graded} if
$A=k\oplus A_1\oplus A_2\oplus \cdots$ and $A$ is \emph{locally finite}
if each $A_i$ is finitely generated over $k$. We now consider
filtered rings. Let $Y$ be a finitely generated free $k$-submodule of
$A$. In this case we would also say that
$Y$ is finite-dimensional (over $k$). Suppose $k\cap Y=\{0\}$. Consider
the standard filtration $F=\{F_n A:= (k\oplus Y)^n\mid n\geq 0\}$ and assume
that $F$ is an exhaustive filtration of $A$ and that the associated graded
ring $\gr A$ is connected graded. As a consequence of $\gr A$ being
connected graded, the unit map $k\to A$ is injective.
For each element $f\in
F_n A\setminus F_{n-1} A$, the associated element in $\gr A$ is
defined to be $\gr f=f+F_{n-1} A\in (\gr_F A)_n$. The degree of an
element $f\in A$, denoted by $\deg f$, is defined to be the degree
of $\gr f$. By definition, $\deg c=0$ for all $0\neq c\in k$.

Using the standard filtration $\{F_n A= (k\oplus Y)^n\mid n\geq 0\}$
makes it easier to talk about
affine automorphisms [Definition \ref{xxdef2.5}]. But the ideas in this
section also apply to non-standard filtrations, see Example \ref{xxex5.8}.

Note that, if $\gr A$ is a domain, then, for any elements $f_1, f_2 \in A$,
\begin{equation}\label{2.0.1}\tag{2.0.1}
\deg (f_1 f_2) = \deg f_1 + \deg f_2.
\end{equation}

Let $A^\times$ denote the set of all units of $A$.  If $\gr A$ is a
connected graded domain, as we assume in much of what follows, it is
easy to see that $A^\times =k^\times$. In this case, if $R$ is any
subalgebra of $A$ (for example, if $R$ is the center of $A$),
$R^\times =k^\times$.

One can check that assigning degrees (which could be different from
$1$) to a set of generators of
$A$ is almost equivalent to giving a filtration on $A$, though
not every filtration has the property that $\gr A$ is a
domain. See \cite[Section 1]{YZ} for some details.

\begin{definition}
\label{xxdef2.1} Suppose that $Y=\bigoplus_{i=1}^n kx_i$ generates
$A$ as an algebra.
\begin{enumerate}
 \item
A nonzero element $f:=f(x_1,x_2,\dots,x_n)\in A$ is called
\emph{locally dominating} if, for every $g\in \Aut(A)$,
one has
\begin{enumerate}
\item
$\deg f(y_1,\dots,y_n)\geq \deg f$ where
$y_i=g(x_i)$ for all $i$, and
\item
if, further, $\deg y_{i_0}>1$ for some $i_0$, then
$\deg f(y_1,\dots,y_n)> \deg f$.
\end{enumerate}
\item
Assume that $\gr A$ is a connected graded domain.
A nonzero element $f\in A$ is called \emph{dominating} if, for every
filtered PI algebra $T$ with $\gr T$ a connected
graded domain, and for every subset of elements
$\{y_1,\dots,y_n\}\subset T$ that is linearly independent in
the quotient $k$-module $T/F_0 T$, there is a lift
of $f$, say $f(x_1,\dots,x_n)$, in the free algebra $k\langle
x_1,\dots,x_n\rangle$, such that the following hold: either
$f(y_1,\dots,y_n)=0$ or
\begin{enumerate}
\item
$\deg f(y_1,\dots,y_n)\geq \deg f$, and
\item
if, further, $\deg y_{i_0}>1$ for some $i_0$, then
$\deg f(y_1,\dots,y_n)> \deg f$.
\end{enumerate}
\end{enumerate}
\end{definition}

We refer to $T$ as a ``testing'' algebra.
To prove our main Theorem \ref{xxthm0.3}, we only need one
testing algebra, $T=A\otimes k[t]=A[t]$. But it convenient
to include all testing algebras $T$ in order to prove Theorem \ref{xxthm5.5}.
In almost all applications, it is easy to see that $f(y_1,\dots,y_n)\neq 0$;
so we only need to verify (a) and (b) in order to show that $f$ is
dominating. If this is the case, we will not mention the subcase of
$f(y_1,\dots,y_n)=0$.

It is not hard to see that dominating elements are locally dominating.
Next we give some examples of dominating elements.
A monomial $x_1^{b_1}\cdots x_n^{b_n}$ is said to have
degree \emph{component-wise less than} (or, \emph{cwlt}, for short)
$x_1^{a_1}\cdots x_n^{a_n}$ if $b_i\leq a_i$ for all $i$ and
$b_{i_0}<a_{i_0}$ for some $i_0$.  We write $f=cx_1^{b_1}\cdots
x_n^{b_n}+\cwlt$ if $f-cx_1^{b_1}\cdots x_n^{b_n}$ is a linear
combination of monomials with degree component-wise less than
$x_1^{b_1}\cdots x_n^{b_n}$.  The following is easy.

\begin{lemma}
\label{xxlem2.2} Retain the above notation and assume that $\gr A$
is a connected graded domain. Fix $f\in A$.
\begin{enumerate}
\item
If $f=cx_1^{b_1}\cdots x_n^{b_n}+\cwlt$ where $n>0$, $b_1b_2\cdots
b_n>0$, and $0\neq c\in k$, then $f$ is dominating.
\item
For any positive integer $d$, $f$ is  dominating \textup{(}respectively,
locally dominating\textup{)} if and only if $f^d$ is.
\end{enumerate}
\end{lemma}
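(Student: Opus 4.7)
The plan is to prove (1) by a direct degree computation with a natural lift of $f$, leveraging the domain hypothesis on $\gr T$, and to prove (2) by the observation that raising to the $d$-th power scales degrees by $d$, so the dominating condition transfers multiplicatively under taking powers.

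For (1), I take the natural lift $\tilde f:=cx_1^{b_1}\cdots x_n^{b_n}+\cwlt$ in the free algebra $k\langle x_1,\ldots,x_n\rangle$. Given any testing pair $(T,\{y_1,\ldots,y_n\})$, linear independence of the $y_i$'s in $T/F_0 T$ forces $y_i\notin F_0 T$, and hence $\deg y_i\geq 1$ for every $i$. Substituting and applying identity (2.0.1) in $T$, the leading monomial $cy_1^{b_1}\cdots y_n^{b_n}$ has degree $\sum_i b_i\deg y_i$, while each cwlt monomial has degree $\sum_i b'_i\deg y_i$ strictly smaller (since $b'_i\leq b_i$, with strict inequality at some $i_0$, and $\deg y_{i_0}\geq 1$). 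Thus no cancellation occurs in the top term and $\deg\tilde f(y_1,\ldots,y_n)=\sum_i b_i\deg y_i\geq\sum_i b_i=\deg f$, with strict inequality whenever some $\deg y_{i_0}>1$ (using $b_i\geq 1$ everywhere). The locally dominating version of (1) is exactly the same computation restricted to $T=A$ and $y_i=g(x_i)$ for $g\in\Aut(A)$.

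For (2), iterating (2.0.1) in $T$ yields $\deg\tilde h^d=d\deg\tilde h$ whenever $\tilde h\neq 0$ in $T$, and $T$ is itself a domain (since $\gr T$ is), so $\tilde f(y_i)^d=0$ if and only if $\tilde f(y_i)=0$. Together with $\deg f^d=d\deg f$ in $A$, this shows that for any lift $\tilde f$ of $f$, conditions (a) and (b) of Definition \ref{xxdef2.1}(2) for the lift $\tilde f$ of $f$ at $(T,\{y_i\})$ are equivalent to the same conditions for the lift $\tilde f^d$ of $f^d$ at $(T,\{y_i\})$. The forward implication is then immediate: a good lift $\tilde f$ of $f$ yields the good lift $\tilde f^d$ of $f^d$. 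The main obstacle I anticipate is the reverse implication, because the definition only promises existence of some good lift of $f^d$, which a priori need not be a $d$-th power of any lift of $f$. My plan there is to take an arbitrary lift $\tilde f$ of $f$ and show that $\tilde f^d$ is itself good for $f^d$: a prescribed good lift $\tilde g$ of $f^d$ differs from $\tilde f^d$ by an element of the kernel of the surjection $k\langle x_1,\ldots,x_n\rangle\to A$, and I expect to exploit the filtration of $T$ to control this difference well enough to transfer the degree bound from $\tilde g(y_i)$ to $\tilde f^d(y_i)$; once this is in place, the equivalence above returns the bound to $\tilde f(y_i)$ and completes the argument. The locally dominating analog of (2) is parallel but simpler, since for $T=A$ and $y_i=g(x_i)$ with $g\in\Aut(A)$ one has $\tilde f(y_i)=g(f)$ independently of the chosen lift.
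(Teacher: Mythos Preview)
Your argument for part (1) is correct and essentially identical to the paper's: both choose the obvious lift, use (2.0.1) in $T$ to see that the substituted leading monomial $cy_1^{b_1}\cdots y_n^{b_n}$ has strictly larger degree than every substituted cwlt monomial, and conclude. For part (2), the forward direction and the locally dominating equivalence are also correct and are exactly what the paper's one-line proof (``clear, using (2.0.1)'') actually justifies: for a fixed lift $\tilde f$, the conditions on $\tilde f$ at $(T,\{y_i\})$ are equivalent to the conditions on $\tilde f^d$ at $(T,\{y_i\})$, and in the locally dominating case the value $\tilde f(g(x_1),\ldots,g(x_n))=g(f)$ is independent of the lift.

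The genuine gap is precisely where you locate it: the reverse implication of (2) for ``dominating''. Your proposed repair---take an arbitrary lift $\tilde f$ of $f$, compare $\tilde f^d$ with a given good lift $\tilde g$ of $f^d$, and ``exploit the filtration of $T$ to control'' $\tilde g(y_i)-\tilde f^d(y_i)$---cannot work. The element $\tilde g-\tilde f^d$ lies in the relation ideal of $A$ inside $k\langle x_1,\ldots,x_n\rangle$, but there is no algebra map $A\to T$, only the free-algebra substitution $x_i\mapsto y_i$; relations of $A$ therefore evaluate to uncontrolled elements of $T$, and the filtration of $T$ gives no bound whatsoever on the degree of $(\tilde g-\tilde f^d)(y_1,\ldots,y_n)$. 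So there is no mechanism to transfer the degree estimate from $\tilde g(y_i)$ to $\tilde f^d(y_i)$. The paper's own proof is equally terse here and does not address this point either; the reverse direction of (2) is not invoked anywhere in the paper, so the lacuna is harmless for the applications, but your proposed argument for it does not close as written.
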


\begin{proof} (2) is clear, using \eqref{2.0.1}. To prove
(1), write
\[
f=cx_1^{b_1}\cdots x_n^{b_n}+\sum c_{a_s}x_1^{a_1}\cdots x_n^{a_n}.
\]
Let $T$ be any ${\mathbb N}$-filtered PI domain and
$\{y_1,\cdots,y_n\}$ be a set of elements in $T$ of degree at least 1.
Suppose that $\deg y_{i_0}>1$ for some $i_0$. Since each term
$x_1^{a_1}\cdots x_n^{a_n}$ is cwlt $x_1^{b_1}\cdots x_n^{b_n}$, we
have $\deg y_1^{a_1}\cdots y_n^{a_n}< \deg y_1^{b_1}\cdots y_n^{b_n}$,
again by \eqref{2.0.1}. Hence $f(y_1,\dots,y_n)$ has leading term
$cy_1^{b_1}\cdots y_n^{b_n}$. Thus
\[
\deg f(y_1,\dots,y_n)=\deg y_1^{b_1}\cdots y_n^{b_n}
=\sum_{i=1}^n b_i \deg y_i>
\sum_{i=1}^n b_i=\deg f.
\]
Therefore part (b) in Definition \ref{xxdef2.1}(2) is verified.
Part (a) can be checked similarly. The assertion follows.
\end{proof}

\begin{definition}
\label{xxdef2.3} Retain the hypotheses in Definition \ref{xxdef2.1}.
Let $\tr: A\to R=F$ be the regular trace function \eqref{1.7.1}
and $w=\rk(A_R/R)$.
We say the discriminant of $A$ over $R$ is \emph{dominating}
(respectively, \emph{locally dominating})
if the discriminant ideal $D_w(A,\tr)$ is a principal ideal of $R$
generated by a dominating (respectively, locally dominating)
element.
\end{definition}

Usually we assume that $A$ is finitely generated free over $R$; then
by Proposition \ref{xxpro1.4}(3), $D_w(A,\tr)$ is generated by
$d(A/R)$. In this case we also say that $d(A/R)$
is dominating in Definition \ref{xxdef2.3}.
We now recall a few other definitions given in the introduction.

\begin{definition}
\label{xxdef2.4}
Let $\Af$ be the category consisting of all $k$-flat $k$-algebras $A$
satisfying the following conditions:
\begin{enumerate}
\item
$A$ is a filtered algebra as in Definition \ref{xxdef2.1}
such that the associated graded ring $\gr A$ is a connected graded domain,
\item
$A$ is a finitely generated free module over its center $R$, and
\item
the discriminant $d(A/R)$ is dominating.
\end{enumerate}
The morphisms in this category are isomorphisms of algebras.
\end{definition}

\begin{definition}
\label{xxdef2.5} Let $(A,Y)$ be defined as in Definition
\ref{xxdef2.1}.
\begin{enumerate}
\item
An algebra automorphism $g$ of $A$ is
said to be \emph{affine} if $\deg g(x_i)=1$ for all $i$, or
equivalently, $g(Y)\subset Y\oplus k$.
\item
If every $g\in \Aut(A)$ is affine, we call $\Aut(A)$ \emph{affine}.
\end{enumerate}
\end{definition}

The definition of an affine automorphism (and that of a dominating
element) is dependent on $Y$ (or the filtration of $A$). But in most
cases, the filtration (which is not unique in general) is relatively
easy to determine. Dominating elements help us to determine the
automorphism group in the following way.

\begin{lemma}
\label{xxlem2.6} Let $A$ be an algebra generated by $Y$ with a
locally dominating element $f$. If $g\in \Aut(A)$ such that $g(f)=\lambda f$
for some $0\neq \lambda \in k$, then $g$ is affine.
\end{lemma}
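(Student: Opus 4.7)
The plan is to argue by contradiction using the defining properties of a locally dominating element. Suppose $g$ is not affine; then there exists some index $i_0$ with $\deg g(x_{i_0})\neq 1$. I would first establish the lower bound $\deg g(x_i)\geq 1$ for every $i$. Since $\gr A$ is a connected graded domain (part of the ambient setup of Section~\ref{xxsec2}), every unit of $A$ lies in $F_0 A = k$, i.e.\ $A^\times = k^\times$. As $x_i$ is not a unit (it has filtration degree $1$), neither is $g(x_i)$, so $g(x_i)\notin k$ and therefore $\deg g(x_i)\geq 1$. Combined with $\deg g(x_{i_0})\neq 1$, this forces $\deg g(x_{i_0})>1$.

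Next I would invoke part~(b) of Definition~\ref{xxdef2.1}(1) applied to the automorphism $g$ and the expression $f=f(x_1,\dots,x_n)$: writing $y_i = g(x_i)$, the existence of an index $i_0$ with $\deg y_{i_0} > 1$ yields
\[
\deg f(y_1,\dots,y_n) \;>\; \deg f.
\]
Because $g$ is an algebra homomorphism, substitution commutes with $g$: $g(f) = f(g(x_1),\dots,g(x_n)) = f(y_1,\dots,y_n)$. Hence $\deg g(f) > \deg f$.

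Finally I would derive the contradiction from the hypothesis $g(f)=\lambda f$ with $\lambda\in k^\times$: since multiplication by a nonzero scalar in $k$ preserves the leading form, $\deg g(f) = \deg(\lambda f) = \deg f$, contradicting the strict inequality just obtained. Therefore no such $i_0$ exists, $\deg g(x_i)=1$ for all $i$, and $g$ is affine.

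There is essentially no obstacle here: the lemma is almost tautological once one notices that the relation $g(f)=\lambda f$ pins $\deg g(f)$ down to $\deg f$, and that the locally dominating hypothesis is precisely designed to rule out any increase in degree coming from a non-affine image of the generators. The only mild subtlety worth stating explicitly is the passage from $\deg g(x_{i_0})\neq 1$ to $\deg g(x_{i_0})>1$, which uses that units of $A$ are scalars; this is why we need $\gr A$ to be a (connected graded) domain rather than just connected graded.
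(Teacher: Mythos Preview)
Your proof is correct and follows the same line as the paper's: show $\deg g(x_i)\geq 1$, then apply part~(b) of the locally dominating definition to force $\deg g(f)>\deg f$, contradicting $g(f)=\lambda f$. One small remark: your justification that $g(x_i)\notin k$ via the unit argument is fine, but the paper uses the more direct observation that a $k$-algebra automorphism fixes $k$ pointwise, so $g(x_i)\in k$ would give $x_i=g^{-1}(g(x_i))=g(x_i)\in k$, a contradiction---this avoids invoking $A^\times=k^\times$ and hence does not actually require the domain hypothesis you flagged at the end.
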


\begin{proof} Since $g$ is an automorphism, the elements $g_i:=g(x_i)$
are not in $k$.  Thus $\deg g_i\geq 1$. If $\deg g_{i_0}>1$ for some
$i_0$, then $\deg f(g_1,\dots,g_n)>\deg f$ as $f$ is locally dominating. Note
that $g(f)=f(g_1,\dots,g_n)$, whence $\deg g(f)>\deg f$, contradicting
the hypothesis $g(f)=\lambda f$. Therefore $\deg g(x_i)=1$ for all
$i$.
\end{proof}

By Lemma \ref{xxlem1.8}(6), the discriminant $d(B/R)$ is
$g$-invariant for any automorphism $g$ such that $g$ and $g^{-1}$
preserve $R$. In several situations -- see Theorem~\ref{xxthm4.9}(1),
Example \ref{xxex5.1}, and \cite{CPWZ2} -- we show
that the discriminant is dominating, and so any automorphism $g$
is affine by Lemma \ref{xxlem2.6}. Here is a general statement,
which is also Theorem \ref{xxthm0.3}(1).

\begin{theorem}
\label{xxthm2.7} Let $A$ be a filtered algebra with standard filtration
$F_n A=(Y\oplus k)^n$. Assume that the discriminant of $A$
over its center $R$ is locally dominating in $A$ \textup{(}for example,
$A$ is in $\Af$\textup{)}.
Then every automorphism  of $A$ is affine.
\end{theorem}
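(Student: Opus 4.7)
The plan is to combine Lemma \ref{xxlem1.8} with Lemma \ref{xxlem2.6} in what amounts to a single observation. Let $g\in \Aut(A)$ be arbitrary, and let $f\in R$ be a locally dominating generator of the discriminant ideal $D_w(A,\tr)$, whose existence is the standing hypothesis of the theorem. I plan to show that $g(f)$ is a nonzero scalar multiple of $f$ and then invoke Lemma \ref{xxlem2.6}.

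First I would check that $g$ and $g^{-1}$ preserve $R$. Since $R$ is the center of $A$, this is automatic: any algebra automorphism of $A$ sends central elements to central elements. Lemma \ref{xxlem1.8}(4) then applies and gives $g(D_w(A,\tr))=D_w(A,\tr)$. Because this ideal is principal, generated by $f$, we obtain $g(f)=u\,f$ for some $u\in R^\times$.

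The next step is to pin down $u$. Since $\gr A$ is a connected graded domain --- an assumption built into the setup in which ``locally dominating'' is defined --- one has $A^\times=k^\times$ as noted earlier in the paper, and combined with $k\subseteq R\subseteq A$ this gives $R^\times=k^\times$. Thus $u=\lambda$ lies in $k^\times$, so $g(f)=\lambda f$, and Lemma \ref{xxlem2.6} yields that $g$ is affine.

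I do not expect any real obstacle, since the conceptual work has already been done in Lemmas \ref{xxlem1.8} and \ref{xxlem2.6}; the theorem is essentially an assembly of these two results. The only delicate point is verifying that the unit $u$ in $R^\times$ is actually a scalar in $k^\times$, and the connected graded domain assumption on $\gr A$ handles this uniformly.
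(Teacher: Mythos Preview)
Your proposal is correct and follows essentially the same route as the paper, which simply cites Lemmas \ref{xxlem1.8}(6) and \ref{xxlem2.6}; you have just unpacked the argument a bit more, in particular making explicit why the unit in $R^\times$ must lie in $k^\times$. Your use of Lemma \ref{xxlem1.8}(4) together with principality of the discriminant ideal is exactly what the proof of Lemma \ref{xxlem1.8}(6) does internally, so there is no substantive difference.
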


\begin{proof}
This follows from Lemmas \ref{xxlem1.8}(6) and \ref{xxlem2.6}.
\end{proof}

\begin{remark}\label{xxrem2.8}
For a filtered algebra $A$ generated by $Y=\bigoplus_{i=1}^n kx_i$, here is
a general way of determining affine automorphisms of $A$. For simplicity,
let $k$ be a field. Write
\[
g(x_i)=\sum_{j=1}^n a_{ij} x_j+ b_i, \quad {\text{for all $i=1,\dots,n$}},
\]
with $(a_{ij})_{n\times n}\in GL_n(k)$ and $b_i\in k$. Write the
inverse of $g$ on the generators as
\[
g^{-1}(x_i)=\sum_{j=1}^n a'_{ij} x_j+ b'_i,
\quad {\text{for all $i=1,\dots,n$}},
\]
with
$(a'_{ij})_{n\times n}=(a_{ij})^{-1}\in GL_n(k)$ and $b'_i\in k$.
List all of the relations of $A$, say,
\[
r_s(x_1,\dots,x_n)=0
\]
for $s=1,2,\dots$.
Then $g$ is an automorphism of $A$ if and only if
\[
r_s(g(x_1),\dots,g(x_n))=r_s(g^{-1}(x_1),\dots,g^{-1}(x_n))=0
\]
for all $s$. After we fix a $k$-basis of $A$,
this is an explicit linear algebra problem and can be
solved completely if we have an explicit description of the relations
$r_s$. If $A$ is noetherian, then it is enough to use
$r_s(g(x_1),\dots,g(x_n))=0$ only. In conclusion, in many situations
it is relatively easy to determine all affine automorphisms of
$A$.

Let $\Aut_{\af}(A)$ be the set of affine
automorphisms of $A$. Since $k$ is a field,
$\Aut_{\af}(A)$ is a subgroup of
$GL(Y\oplus k)$. Since every relation of $A$ gives rise to some
closed conditions, $\Aut_{\af}(A)$ is a closed subgroup of
$GL(Y\oplus k)$. As a consequence, $\Aut_{\af}(A)$ is an
algebraic group and acts on $Y\oplus k$ rationally.
\end{remark}

\section{Consequences}
\label{xxsec3}

In the previous section, we proved Theorem~\ref{xxthm0.3}(1); our goal
now is to prove the rest of that theorem. This involves an examination
of triangular automorphisms and locally nilpotent derivations.

First we consider the automorphism group of
$A[t]$ when $A$ has a dominating discriminant over
its center $R$.
For any  $g\in \Aut(A)$, $c\in k^\times$ and $r\in R$, the map
\begin{equation}\label{3.0.1}\tag{3.0.1}
 \sigma: t\to ct+r, \quad  x\to g(x), \quad {\text{for all $x\in A$}}
\end{equation}
determines uniquely a so-called \emph{triangular} automorphism of $A[t]$.
The automorphisms given in Example \ref{xxex5.12} can be viewed as triangular
automorphisms of the Ore extension $D[x_n;\tau,\delta]$ where $D$ is the
subalgebra generated by $\{x_1,\dots,x_{n-1}\}$.

One may associate the triangular automorphism
$\sigma$ \eqref{3.0.1} with the upper triangular matrix
$\begin{pmatrix} g & r\\0& c\end{pmatrix}$. The product of
two such automorphisms (or two such matrices) is given by
\[
\begin{pmatrix} g_1 & r_1\\0& c_1\end{pmatrix}
\circ \begin{pmatrix} g_2 & r_2\\0& c_2\end{pmatrix}=
\begin{pmatrix} g_1g_2 & g_1(r_2)+r_1 c_2\\0& c_1c_2\end{pmatrix}.
\]
The inverse is given by
\[
\begin{pmatrix} g & r\\0& c\end{pmatrix}^{-1}=
\begin{pmatrix} g^{-1} & -c^{-1} g^{-1}(r)\\0& c^{-1}\end{pmatrix}.
\]
This shows that all triangular automorphisms form a subgroup of
$\Aut(A[t])$, which is denoted by
\[
\Aut_{\triangular}(A[t])\quad {\text{or}} \quad
\begin{pmatrix} \Aut(A)& R\\ 0& k^\times\end{pmatrix}.
\]
Using the dominating discriminant we can show that
$\Aut_{\triangular}(A[t])=\Aut(A[t])$. The following lemma is obvious.

\begin{lemma}\label{xxlem3.1} Suppose $A$ is a finitely
generated free module over its center $R$. Let $C$ be a
commutative algebra that is $k$-flat. Then $d(A\otimes C/R\otimes C)
=_{(R\otimes C)^{\times}} d(A/R)$. If, further,
$(R\otimes C)^\times=R^{\times}$, then $d(A\otimes C/R\otimes C)
=_{R^{\times}} d(A/R)$.
\end{lemma}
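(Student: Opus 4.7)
The plan is to compute both discriminants in a compatible basis and invoke the base-change behavior of the regular trace.

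First, fix an $R$-basis $Z=\{z_1,\dots,z_w\}$ of $A$. Because $C$ is $k$-flat, tensoring the $R$-module isomorphism $A\cong R^w$ with $C$ over $k$ shows that $A\otimes C$ is free as an $R\otimes C$-module with basis $Z':=\{z_i\otimes 1\}_{i=1}^w$. By Proposition \ref{xxpro1.4}(3) applied to the regular trace $\tr'$ of $A\otimes C$ over $R\otimes C$, one has $d(A\otimes C/R\otimes C)=_{(R\otimes C)^\times} d_w(Z':\tr')$, and similarly $d(A/R)=_{R^\times} d_w(Z:\tr)$ for the regular trace $\tr$ of $A$ over $R$.

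Next I would check that the regular trace is compatible with the extension of scalars $R\to R\otimes C$, $r\mapsto r\otimes 1$. Writing the structure constants $z_i z_j=\sum_l r_{ij}^l z_l$ in $A$ with $r_{ij}^l\in R$, the same relations (tensored with $1$) hold in $A\otimes C$. Hence the matrix of left multiplication by $z_i\otimes 1$ on $A\otimes C$ in the basis $Z'$ is the entrywise image, under $r\mapsto r\otimes 1$, of the matrix of left multiplication by $z_i$ on $A$ in the basis $Z$. Taking internal traces gives $\tr'((z_i\otimes 1)(z_j\otimes 1))=\tr(z_i z_j)\otimes 1$, so the Gram matrix $(\tr'((z_i\otimes 1)(z_j\otimes 1)))$ is the entrywise image of $(\tr(z_i z_j))$. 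Passing to determinants yields $d_w(Z':\tr')=d_w(Z:\tr)\otimes 1$ in $R\otimes C$.

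Combining these observations with Proposition \ref{xxpro1.4}(3) gives $d(A\otimes C/R\otimes C)=_{(R\otimes C)^\times} d(A/R)\otimes 1$, which is the first claim under the canonical map $R\to R\otimes C$. The second claim is then immediate: if $(R\otimes C)^\times = R^\times$, then the unit witnessing the first equality already lies in $R^\times$, so the equality descends. The only step requiring any care is the compatibility of the regular trace with flat base change; once a common basis is fixed this is essentially automatic, so I do not expect a serious obstacle.
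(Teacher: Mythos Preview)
Your proof is correct and is precisely the natural verification. The paper itself provides no proof of this lemma, merely introducing it with ``The following lemma is obvious,'' so your argument---fixing an $R$-basis, tensoring to obtain an $R\otimes C$-basis, and checking that the Gram matrix of regular traces base-changes entrywise along $R\to R\otimes C$---is exactly the expected elaboration.
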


The next lemma says that discriminant of $d(A[t]/R[t])$ is dominating
among the elements in $g(A)$, for $g\in \Aut(A[t])$: it controls the
degree of $g(x_i)$ for $x_i\in Y$ and for $g\in \Aut(A[t])$.  However,
it does not control the degree of $g(t)$.

\begin{lemma}\label{xxlem3.2}
Let $A$ be in $\Af$. Then the following hold.
\begin{enumerate}
\item
Let $C$ be a $k$-flat commutative filtered algebra
such that $\gr A\otimes \gr C$ is a connected graded
domain. If $g\in \Aut(A\otimes C)$, then $g(Y)\subset Y\oplus k$.
\item
Let $m$ be  a positive integer.
If $g$ is an automorphism of $A[t_1,\dots,t_m]$, then
$g(Y)\subseteq Y\oplus k$.
\end{enumerate}
\end{lemma}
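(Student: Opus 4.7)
The plan is to reduce part (2) to part (1) by taking $C=k[t_1,\dots,t_m]$, and to prove part (1) by applying the dominating property of $f:=d(A/R)$ (supplied by $A\in\Af$) to $T:=A\otimes C$ regarded as a testing algebra in the sense of Definition~\ref{xxdef2.1}(2).

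First, I would establish that $g(f)=\lambda f$ for some $\lambda\in k^\times$. Since $C$ is commutative and $k$-flat, a direct check (using $Z(A)=R$) gives $Z(A\otimes C)=R\otimes C$, so $g$ preserves $R\otimes C$. Lemma~\ref{xxlem3.1} identifies $d((A\otimes C)/(R\otimes C))$ with $d(A/R)$ up to a unit of $R\otimes C$, and Lemma~\ref{xxlem1.8}(6) then shows this discriminant is fixed by $g$ up to an element of $(R\otimes C)^\times$. Since $\gr(A\otimes C)\cong\gr A\otimes\gr C$ is a connected graded domain by hypothesis, the standard leading-degree argument gives $(A\otimes C)^\times=k^\times$, hence $(R\otimes C)^\times=k^\times$. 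So $g(f)=\lambda f$ for some $\lambda\in k^\times$; in particular $g(f)\neq 0$.

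Next, I would equip $T=A\otimes C$ with a filtration whose first level is exactly $Y\oplus k$. Rescale the filtration on $C$ by a factor $N\geq 2$, setting $F'_j C:=F_{\lfloor j/N\rfloor} C$, and use the tensor-product filtration on $T$. Then $F_0 T=k$, $F_1 T=F_1 A\otimes F'_0 C=Y\oplus k$, and $\gr T$ is $\gr A\otimes\gr C$ with a reweighted grading, hence still a connected graded domain. The algebra $T$ is PI since $A$ is module-finite over its center and $C$ is commutative. In this filtration an element of $A$ has the same degree as in $A$, so $\deg_T \lambda f=\deg f$. I would then apply the dominating property of $f$ with testing algebra $T$ and elements $y_i:=g(x_i)$. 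The set $\{y_1,\dots,y_n\}$ is linearly independent in $T/F_0 T=T/k$, because any relation $\sum c_i y_i\in k$ pulls back by $g^{-1}$ (which fixes scalars) to $\sum c_i x_i\in k\cap Y=\{0\}$. Choose a lift $F\in k\langle X_1,\dots,X_n\rangle$ of $f$; the evaluation $F(y_1,\dots,y_n)=g(f)=\lambda f$ is nonzero of degree $\deg f$ in $T$, so clause (b) of Definition~\ref{xxdef2.1}(2) rules out $\deg y_{i_0}>1$. Therefore $g(x_i)\in F_1 T=Y\oplus k$ for each $i$, proving (1). Part (2) follows by specializing to $C=k[t_1,\dots,t_m]$ with the standard grading, so that $\gr A\otimes\gr C$ is again a connected graded domain.

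The main obstacle is the filtration choice in the previous paragraph: with the naive tensor-product filtration ($N=1$), the dominating property only forces $g(x_i)\in(Y\oplus k)+F_1 C$, which falls short of the desired conclusion. Regrading $C$ so that $F'_1 C=F_0 C=k$ is what lets the dominating inequality bite and deliver the sharp bound $g(x_i)\in Y\oplus k$. A secondary technical point is the identification $Z(A\otimes C)=R\otimes C$ and the computation $(A\otimes C)^\times=k^\times$, but both follow cleanly from $k$-flatness of $C$ and the domain condition on $\gr A\otimes\gr C$.
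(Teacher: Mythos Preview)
Your proof is correct and follows essentially the same approach as the paper's: both identify $d(A\otimes C/R\otimes C)=_{k^\times}d(A/R)$ via Lemma~\ref{xxlem3.1}, observe $(A\otimes C)^\times=k^\times$, and then apply the dominating property of the discriminant after rescaling the filtration on $C$ so that $F_1T=Y\oplus k$ (the paper uses the factor $2$, you use a general $N\ge 2$). You are in fact slightly more careful than the paper in two places---you explicitly verify that $T$ is PI and that $\{y_i\}$ is linearly independent in $T/k$ (via $g^{-1}$), whereas the paper only records $\deg y_i\ge 1$ from non-centrality of the $x_i$---but these are minor refinements of the same argument.
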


\begin{proof} (2) is a consequence of (1). So we only prove
(1).

Let  $T$ be the corresponding filtered algebra $A\otimes C$
such that $\gr T=\gr A\otimes \gr C$, which is a domain by
hypothesis. Hence \eqref{2.0.1} holds and
$(A\otimes C)^\times =k^\times$. It is clear that the center of
$A\otimes C$ is $R\otimes C$.
By Lemma \ref{xxlem3.1}, $f := d(A\otimes C/R\otimes C)=_{k^\times} d(A/R)$.
Let $Y=\bigoplus_{i=1}^n kx_i$.

Consider a new filtration on the testing algebra $A\otimes C$
with assignment $\deg'(c)=2\deg (c)$ for all $c\in C$ and
$\deg'(x_i)=1$ for all $i$.  Consequently, $\deg' (c)\geq 2$
for any $c\in C\setminus k$. It is easy to verify that
$\gr' (A\otimes C)\cong (\gr A)\otimes (\gr' C)$, and the latter is
isomorphic to $(\gr A)\otimes (\gr C)$ as ungraded algebras.

Let $g\in \Aut(A\otimes C)$. Since $g$ preserves $f$ (up to a scalar),
$\deg' g(f)=\deg' f$. Since $x_i\in Y\setminus \{0\}$ are not in the center,
$y_i:=g(x_i)$ is not in the center of $A\otimes C$ for all $i$.
Consequently, $\deg y_i\geq 1$ for all $i$.
Since $f$ is dominating, there is a presentations of $f$, say
$f(x_1,\dots,x_n)$, such that
\[
\deg' g(f)=\deg' f(y_1,\dots,y_n)>\deg' f (=\deg f)
\]
if $\deg' y_i>1$ for some $i$. This yields a contradiction
and therefore $\deg' y_i\leq 1$ for all $i$.  This
means that $g(x_i)\in Y\oplus k$ for all $i$ as
$\deg' (c)\geq 2$ for any $c\in C\setminus k$.
\end{proof}

Derivations are closely related to automorphisms.
Recall that a $k$-linear map $\partial: A\to A$ is called a
\emph{derivation} if
\[
\partial (xy)=\partial(x)y+x\partial(y)
\]
for all $x,y\in A$. We call $\partial$ \emph{locally nilpotent}
if for every $x\in A$, $\partial^n(x)=0$ for some $n$. Given a
locally nilpotent derivation $\partial$ (and assuming that
${\mathbb Q}\subseteq k$), the exponential map $\exp(\partial):
A\to A$ is defined by
\[
\exp(\partial)(x)=\sum_{i=0}^{\infty} \frac{1}{i!} \partial^i(x),
\quad {\text{for all $x\in A$.}}
\]
Since $\partial$ is locally nilpotent, $\exp(\partial)$ is an
algebra automorphism of $A$ with inverse $\exp(-\partial)$.

\begin{lemma}\label{xxlem3.3} Suppose that ${\mathbb Q}\subseteq
 k$. Let $C$ be a commutative algebra that is $k$-flat.
 \begin{enumerate}
  \item
  If every $k$-algebra automorphism of $A\otimes C[t]$ restricts
  to an algebra automorphism of $A$, then every locally nilpotent
  derivation of $A\otimes C$ becomes zero when restricted to $A$.
  \item
  If $\Aut(A[t])=\Aut_{\triangular}(A[t])$, then every locally nilpotent
  derivation of $A$ is zero.
  \item
  If $A$ is in $\Af$, then every locally nilpotent
  derivation of $A[t_1,\dots,t_m]$ becomes zero when restricted to $A$.
 \end{enumerate}
\end{lemma}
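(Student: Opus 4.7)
The plan is to use the standard exponential correspondence between locally nilpotent derivations and one-parameter families of automorphisms, exactly as set up in the paragraph preceding the lemma.

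For part (1), let $\partial$ be a locally nilpotent derivation of $B := A\otimes C$. I would extend $\partial$ to a $k$-linear derivation $D$ of $B[t] \cong A\otimes C[t]$ by declaring $D(t)=0$ and $D|_B = t\partial$; since $t$ is central, a Leibniz check shows this is a well-defined derivation satisfying $D^n(b t^m) = \partial^n(b)\, t^{n+m}$, so $D$ is locally nilpotent on $B[t]$. Because $\mathbb{Q}\subseteq k$, the exponential $\exp(D)$ is an algebra automorphism of $A\otimes C[t]$, and by hypothesis it restricts to an algebra automorphism of $A$. Therefore, for every $a\in A$,
\[
\exp(D)(a) \;=\; \sum_{i\geq 0}\frac{t^i\,\partial^i(a)}{i!} \;\in\; A.
\]
Via the $k$-flatness of $C$ we have the direct sum decomposition $A\otimes C[t] = \bigoplus_{i\geq 0}(A\otimes C)\,t^i$, in which $A$ sits entirely in the $t^0$-component. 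Comparing the $t^i$-components of the displayed equality forces $\partial^i(a)=0$ in $A\otimes C$ for every $i\geq 1$; in particular $\partial(a)=0$, i.e., $\partial$ vanishes on $A$.

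Part (2) is immediate from (1) with $C = k$: by the very definition of a triangular automorphism, every $\sigma\in\Aut_{\triangular}(A[t])$ satisfies $\sigma|_A\in\Aut(A)$, so the hypothesis of (1) holds and any locally nilpotent derivation of $A$ is zero. For part (3), take $C = k[t_1,\dots,t_{m-1}]$ with its standard grading, so that $\gr A\otimes \gr C$ is a connected graded domain (using that $\gr A$ is a domain since $A\in\Af$) and $A\otimes C[t_m] \cong A[t_1,\dots,t_m]$. To verify the hypothesis of (1), apply Lemma \ref{xxlem3.2}(2) to any $g\in\Aut(A[t_1,\dots,t_m])$: this yields $g(Y)\subseteq Y\oplus k$, and since $Y$ generates $A$ as an algebra we get $g(A)\subseteq A$; the same applied to $g^{-1}$ gives $g^{-1}(A)\subseteq A$, so $g$ restricts to an algebra automorphism of $A$. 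Part (1) then delivers the desired conclusion.

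The only substantive step is the construction in (1): checking that $D = t\partial$ is a well-defined locally nilpotent derivation of $B[t]$, and then extracting $\partial|_A = 0$ from the containment $\exp(D)(a)\in A$ via the $t$-grading on $A\otimes C[t]$. Once (1) is set up, parts (2) and (3) reduce formally to verifying that the hypothesis ``restricts to an automorphism of $A$'' is satisfied, which for (2) is the definition and for (3) is supplied by Lemma \ref{xxlem3.2}(2).
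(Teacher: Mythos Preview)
Your argument for parts (1) and (2) is correct and matches the paper's approach exactly: extend $\partial$ to $B[t]$ by setting it to zero on $t$, multiply by the central element $t$, exponentiate, and read off $\partial|_A=0$ from the $t$-grading.

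In part (3) there is an indexing slip. Part (1) has hypothesis on $\Aut(A\otimes C[t])$ and conclusion on locally nilpotent derivations of $A\otimes C$. You want the conclusion for derivations of $A[t_1,\dots,t_m]$, so you must take $C=k[t_1,\dots,t_m]$, not $k[t_1,\dots,t_{m-1}]$; with your choice the conclusion of (1) only covers derivations of $A[t_1,\dots,t_{m-1}]$. The fix is immediate: set $C=k[t_1,\dots,t_m]$ and verify the hypothesis of (1) for automorphisms of $A\otimes C[t]\cong A[t_1,\dots,t_m,t]$, i.e.\ a polynomial extension in $m+1$ variables. Lemma \ref{xxlem3.2}(2) (or \ref{xxlem3.2}(1) with $C=k[t_1,\dots,t_m,t]$, as the paper phrases it) applies for any number of variables, so the rest of your argument goes through unchanged.
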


\begin{proof}
 (1) Let $\partial$ be a locally nilpotent derivation of $A\otimes C$.
Extend $\partial$ to $\partial': A\otimes C[t]\to A\otimes C[t]$ by
defining $\partial'(t)=0$ and $\partial'\mid_{A\otimes C}=\partial$.
Then $\partial'$ is a locally nilpotent derivation of $A\otimes C[t]$.
Further, $t\partial'$ is a locally nilpotent derivation of $A\otimes C[t]$.
Then the exponential map $\exp(t\partial')$ is a $k$-algebra
automorphism of $A\otimes C[t]$. By hypothesis, the restriction of
$\exp(t\partial')$ to $A$ is an automorphism of $A$. But,
\[\exp(t\partial')(x)=\sum_{i=0}^\infty \frac{t^i}{i!} \partial^i(x),
\quad {\text{for all $x\in A$}},\]
which is in $A$ only if $\partial(x)=0$. The assertion follows.

(2) This is a special case of (1) when $C=k$.

(3) Let $C=k[t_1,\dots,t_m]$. By Lemma \ref{xxlem3.2}(1) (for
$C=k[t_1,\dots,t_m,t]$),
the hypotheses of part (1) hold. Then the assertion follows from part (1).
\end{proof}

From now until Lemma \ref{xxlem3.6} we suppose that
$k$ is a field of characteristic zero.
We refer to \cite{Hu} for basic definitions about (affine) algebraic groups.
By Remark \ref{xxrem2.8},  if $\Aut(A)$ is affine, then it
is an algebraic subgroup of $GL(Y\oplus k)$.
Let $\Aut^1(A)$ denote the identity component of $\Aut(A)$,
which is the unique closed, connected, normal subgroup of finite index
in $\Aut(A)$. An element $\sigma\in \Aut^1(A)$ or in $\Aut(A)$
is called \emph{unipotent} if $Id-\sigma$, as a linear map of $Y\oplus k$,
is nilpotent.

\begin{lemma}\label{xxlem3.4}
Let $k$ be a field of characteristic zero.
Assume that $\Aut(A)$ is affine
\textup{(}namely, $\Aut(A)\subset GL(Y\oplus k)$\textup{)} and
that every locally nilpotent derivation of $A$ is zero. Then
$\Aut^1(A)$ is a torus -- it is isomorphic to $(k^\times)^r$
for some $r\geq 0$ --
and $\Aut(A)$ is an algebraic group that fits into an exact
sequence
\[
1\to (k^\times)^r\to \Aut(A)\to S\to 1
\]
for some finite group $S$.
\end{lemma}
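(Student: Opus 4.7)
The plan is three-fold: (1) realize $\Aut(A)$ as a closed subgroup of $GL(Y \oplus k)$ using the affineness assumption, (2) show that $\Aut^1(A)$ contains no non-identity unipotent element by producing a locally nilpotent derivation from any such element, and (3) invoke the structure theory of connected linear algebraic groups in characteristic zero to conclude that $\Aut^1(A)$ is a torus. The exact sequence is automatic once (3) gives the torus claim, with $S := \Aut(A)/\Aut^1(A)$ finite by definition of the identity component. Step (1) is immediate from Remark~\ref{xxrem2.8}.

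For step (2), suppose $\sigma \in \Aut^1(A)$ is unipotent, so $N := \mathrm{Id} - \sigma$ is nilpotent on $Y \oplus k$. Because $\sigma$ is an algebra homomorphism, $N$ satisfies $N(xy) = N(x)\sigma(y) + x\,N(y)$; iterating this identity, together with the fact that $Y \oplus k$ generates $A$, shows that $N$ is locally nilpotent on all of $A$. Define $\partial := \log \sigma = -\sum_{n \ge 1} N^n/n$, which is a well-defined $k$-linear, locally nilpotent operator on $A$ (the denominators are invertible since $\ch k = 0$), and the formal identity $\exp \circ \log = \mathrm{Id}$ on unipotent operators gives $\exp(\partial) = \sigma$. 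To verify that $\partial$ is a derivation, consider the one-parameter family $\sigma^t := \exp(t\partial)$. On any finite-dimensional $\partial$-stable subspace of $A$ containing $x$, $y$, and $xy$, the operator $\sigma^t$ is polynomial in $t$, so for each fixed $x,y \in A$ the expression $P(t) := \sigma^t(xy) - \sigma^t(x)\sigma^t(y)$ is a polynomial in $t$ with values in a finite-dimensional subspace of $A$. The exponent law for commuting operators gives $\exp(n\partial) = \sigma \circ \cdots \circ \sigma$ ($n$-fold composite) for every $n \in \mathbb{Z}$, and since each such composite is an honest algebra automorphism, $P(n) = 0$ for every integer $n$. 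As $k$ is infinite, $P \equiv 0$; extracting the coefficient of $t$ (equivalently, differentiating at $t=0$) yields the Leibniz rule $\partial(xy) = \partial(x)y + x\,\partial(y)$, so $\partial$ is a locally nilpotent derivation of $A$. The hypothesis forces $\partial = 0$, whence $\sigma = \exp(0) = \mathrm{Id}$.

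For step (3), $\Aut^1(A)$ is a connected affine algebraic group over a field of characteristic zero with no non-identity unipotent element (extending scalars to the algebraic closure if needed, and noting that locally nilpotent derivations extend along flat base change). By the structure theorem for linear algebraic groups, its unipotent radical is trivial so $\Aut^1(A)$ is reductive, and a non-toral reductive group contains a copy of $SL_2$ or $PGL_2$ which supplies non-trivial unipotents; thus $\Aut^1(A)$ is a torus, isomorphic to $(k^\times)^r$ for some $r \ge 0$. The main obstacle is step (2), where one must confirm that $\log \sigma$ is not merely a $k$-linear locally nilpotent operator but a derivation; the Zariski-density argument above replaces a direct logarithmic Leibniz computation by differentiation of a polynomial identity in $t$, and is the key technical point.
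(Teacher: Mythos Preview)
Your proof is correct and follows the same strategy as the paper's: show that any unipotent $\sigma$ has $\log\sigma$ a locally nilpotent derivation (hence zero by hypothesis, hence $\sigma=\mathrm{Id}$), and then invoke structure theory to conclude $\Aut^1(A)$ is a torus. The paper simply asserts that $\log\sigma$ is a derivation and cites \cite[Exer.~21.4.2]{Hu} for the torus conclusion, whereas you supply a Zariski-density/interpolation argument for the Leibniz rule and unpack the exercise via the reductive structure theorem; these are elaborations of the same argument rather than a genuinely different route.
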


\begin{proof}
 Let $\sigma$ be in $\Aut(A)$ such that $Id-\sigma$ is
nilpotent on $Y\oplus k$.
Then $\log \sigma:=\sum_{n=1}^{\infty} \frac{-1}{n} (Id-\sigma)^n$
 is a locally nilpotent derivation. By hypothesis, $\log \sigma$
 is zero. Then $Id-\sigma$ is zero, so $\sigma=Id$. So every unipotent
 element in $\Aut(A)$ is the identity. Then $\Aut^1(A)$ is a torus by
\cite[Exer. 21.4.2]{Hu}. Since $\Aut^1(A)$ has finite index in $\Aut(A)$,
the exact sequence is clear.
\end{proof}

Now we are ready to prove Theorem \ref{xxthm0.3}(2,3,4).

\begin{theorem}
\label{xxthm3.5} Let $k$ be a field of characteristic zero
and $A$ be in $\Af$. Then the following hold.
\begin{enumerate}
\item
$\Aut(A[t])=\Aut_{\triangular}(A[t]).$
\item
Every locally nilpotent derivation $\partial$ of $A[t]$ is of the form
\[\partial(x)=0 \quad {\text{for all $x\in A$}},
\quad \partial(t)=r \quad {\text{for some $r\in R$}}.\]
\item
Every locally nilpotent derivation of $A$ is zero.
\item
$\Aut(A)$ is an algebraic group that fits into an exact
sequence
\[
1\to (k^\times)^r\to \Aut(A)\to S\to 1
\]
for some finite group $S$.
 \end{enumerate}
\end{theorem}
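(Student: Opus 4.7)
The plan is to prove the four parts in order: (1) is the main step, while (2), (3), and (4) follow quickly from (1) together with the lemmas already developed in this section.

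For (1), fix $h\in\Aut(A[t])$. The key input is Lemma \ref{xxlem3.2}(2) with $m=1$, which gives $h(Y)\subseteq Y\oplus k\subseteq A$; applying the same to $h^{-1}$, the restriction $g:=h|_A$ is an automorphism of $A$. Since $t$ is central in $A[t]$ and $h$ is an isomorphism, $h(t)$ must lie in the center of $A[t]$, which is $R[t]$. Write $h(t)=\sum_{i=0}^n a_i t^i$ and $h^{-1}(t)=\sum_{j=0}^m b_j t^j$ with $a_i,b_j\in R$ and $a_n,b_m\neq 0$. Because $\gr A$ is a connected graded domain, $A$ and its center $R$ are domains, so $R[t]$ is a domain; the identity $t=h(h^{-1}(t))=\sum_j g(b_j)\,h(t)^j$ in $R[t]$ then forces $nm=1$ by comparing $t$-degrees, and $g(b_1)a_1=1$ from leading coefficients. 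Since $R^\times=k^\times$ (again from connected graded), $a_1\in k^\times$, so $h(t)=a_1 t+a_0$ with $a_0\in R$. This is the triangular form described in \eqref{3.0.1}.

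For (2), let $\partial$ be a locally nilpotent derivation of $A[t]$. Lemma \ref{xxlem3.3}(3) with $m=1$ gives $\partial|_A=0$. Applying $\partial$ to the identity $at=ta$ for $a\in A$ then yields $a\,\partial(t)=\partial(t)\,a$, so $\partial(t)$ commutes with $A$ and hence $\partial(t)\in R[t]$. Write $\partial(t)=\sum_{i=0}^n p_i t^i$ with $p_i\in R$ and $p_n\neq 0$; since $\partial$ vanishes on $R$, induction shows $\partial^{k+1}(t)=(\partial^k(t))'\,\partial(t)$ in $R[t]$, where $'$ is the formal $t$-derivative. The $t$-degrees thus satisfy $d_{k+1}=d_k+n-1$ starting from $d_1=n$. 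If $n\geq 2$ these grow without bound, contradicting local nilpotence; if $n=1$, a direct calculation gives $\partial^k(t)=p_0 p_1^{k-1}+p_1^k t$, whose leading coefficient $p_1^k$ is nonzero in the domain $R$ for every $k$, again contradicting local nilpotence. Hence $n=0$ and $\partial(t)\in R$.

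Part (3) is immediate: by (1), $\Aut(A[t])=\Aut_{\triangular}(A[t])$, so Lemma \ref{xxlem3.3}(2) forces every locally nilpotent derivation of $A$ to be zero. For (4), Theorem \ref{xxthm2.7} shows that $\Aut(A)$ is affine, and combined with (3) the hypotheses of Lemma \ref{xxlem3.4} are satisfied, yielding the claimed exact sequence. The principal obstacle is the degree and unit analysis in (1): one needs the centrality of $t$ to land $h(t)$ in $R[t]$, together with both the domain property of $R$ and the identity $R^\times=k^\times$, to pin $h(t)$ down as a degree-one polynomial with invertible leading coefficient. The argument in (2) is a variant of this same circle of ideas.
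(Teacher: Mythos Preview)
Your proof is correct and follows essentially the same approach as the paper's. The only cosmetic differences are orderings: in (1) you invoke centrality of $t$ at the outset to place $h(t)\in R[t]$ before the degree argument, whereas the paper does the degree argument with coefficients in $A$ and appeals to centrality only afterwards to land $a_0$ in $R$; in (2) you likewise place $\partial(t)\in R[t]$ first and then argue on $t$-degree (with an explicit treatment of the linear case $n=1$), while the paper argues on $t$-degree with coefficients in $A$ and uses centrality at the end. These are equivalent rearrangements of the same ideas.
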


\begin{proof} (1)
Let $Y=\bigoplus_{i=1}^n kx_i$ and $g\in \Aut(A[t])$. By Lemma
\ref{xxlem3.2}(2), $g(x_i)\in
Y\oplus k\subset A$, or $g(A)\subset A$.
Applying Lemma \ref{xxlem3.2}(2)
to $h:=g^{-1}$, we have $h(A)\subset A$. Thus
$g|_{A}$ and $h|_{A}$ are inverse to each other and
hence $g|_A\in \Aut(A)$. Let $g(t)=\sum_{i=0}^n a_i t^i$
with $a_n\neq 0$ and $h(t)=\sum_{j=0}^m b_j t^j$
with $b_m\neq 0$. Then $gh(t)=\sum_{i=0}^{nm} c_i t^i$
with $c_{nm}=a_n (b_m)^n\neq 0$. Since $gh(t)=t$,
$nm=1$ (consequently, $n=m=1$) and $a_1 b_1=1$.
Thus $c:=a_1\in R^\times =k^\times$. This shows that
$g(t)=ct+a_0$ where $c\in k^\times $ and $a_0\in A$.
Since $t$ is central, $r:=a_0\in R$. The assertion
follows.

(2) By Lemma \ref{xxlem3.3}(3), $\partial(x)=0$
for all $x\in A$. Let $\partial(t)=\sum_{i=0}^{d}
c_i t^i$ for some $c_i\in A$. Suppose $\partial(t)\neq 0$
and it has $t$-degree $d$ (namely, $c_d\neq 0$).
If $n>0$, the induction shows that $\partial^{n}(t)$
has $t$-degree $nd-(n-1)$. Hence $\partial$ is not
locally nilpotent, a contradiction. Thus $\partial(t)
=c_0\in A$. Since $xt=tx$ for all $x\in A$, applying
$\partial$ to the equation, we have
$xc_0=c_0x$. Thus $c_0$ is in the center of $A$ and
the assertion follows.

(3) Follows from part (1) and Lemma \ref{xxlem3.3}(2).

(4) Follows from Theorem \ref{xxthm2.7}, part (3) and Lemma \ref{xxlem3.4}.
\end{proof}

Next we compute another automorphism group and we assume that $k$ is a
commutative domain. For any positive integer $m$, define
$A[\underline{t}_m^{\pm 1}]$ to be the Laurent polynomial extension
$A[t_1^{\pm 1}, t_2^{\pm 1}, \cdots, t_m^{\pm1}]$.  The following
lemma is easy and the proof is omitted.

\begin{lemma} \label{xxlem3.6} Let $A$ be any algebra.
 \begin{enumerate}
  \item
  $(A[\underline{t}_m^{\pm 1}])^\times=\bigcup_{(n_s)\in {\mathbb Z}^m}
  A^\times \cdot t_1^{n_1}t_2^{n_2}\cdots t_m^{n_m}$.
  \item
  Suppose $A^\times =k^\times$. Then every automorphism of
  $A[\underline{t}_m^{\pm 1}]$ preserves $k[\underline{t}_m^{\pm 1}]$.
  \item
  $\Aut(k[\underline{t}_m^{\pm 1}])=(k^\times)^{m} \rtimes GL_m({\mathbb Z})$.
 \end{enumerate}
\end{lemma}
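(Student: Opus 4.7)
The plan is to handle the three parts in order, proving (1) by induction on $m$ and then deducing (2) and (3) directly from it.

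For part (1), I first note that the claim as stated really requires $A$ to be a (noncommutative) domain. This is tacit in every application in the paper, since algebras in $\Af$ are domains. Granting this, I would reduce to the one-variable case by writing $A[\underline{t}_m^{\pm 1}] = A[\underline{t}_{m-1}^{\pm 1}][t_m^{\pm 1}]$: by induction, the outer coefficient ring $A[\underline{t}_{m-1}^{\pm 1}]$ is again a domain with units of the asserted form, and the claim for $A[\underline{t}_m^{\pm 1}]$ then follows from the base case applied to this new coefficient ring. For the base case $m=1$, given $fg = 1$ with $f = \sum_{i=p}^{n} a_i t^i$ and $g = \sum_{j=q}^{\ell} b_j t^j$ (where $a_n, a_p, b_\ell, b_q$ are all nonzero), centrality of $t$ and the domain property force the top and bottom coefficients of $fg$ to be $a_n b_\ell$ and $a_p b_q$, both nonzero. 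The equation $fg = 1$ then requires $n+\ell = 0 = p+q$, hence $n = p$, so $f = a_n t^n$ with $a_n \in A^\times$.

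For part (2), assume $A^\times = k^\times$ and let $g \in \Aut(A[\underline{t}_m^{\pm 1}])$. Each $g(t_i)$ is a unit, so by part (1) has the form $c_i\, t_1^{n_{i,1}} \cdots t_m^{n_{i,m}}$ with $c_i \in A^\times = k^\times$; in particular $g(t_i) \in k[\underline{t}_m^{\pm 1}]$. Since $g$ fixes $k$ (as a $k$-algebra map), it sends $k[\underline{t}_m^{\pm 1}]$ into itself. Running the same argument with $g^{-1}$ yields the reverse inclusion, so $g$ preserves $k[\underline{t}_m^{\pm 1}]$.

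For part (3), apply (2) with $A = k$: every $g \in \Aut(k[\underline{t}_m^{\pm 1}])$ satisfies $g(t_i) = c_i \prod_{j} t_j^{n_{i,j}}$ for some $c_i \in k^\times$ and integer matrix $N = (n_{i,j})$. Applying the same analysis to $g^{-1}$ shows $N^{-1}$ is also integral, so $N \in GL_m(\Z)$. The assignment $g \mapsto ((c_i), N)$ defines an injective group homomorphism $\Aut(k[\underline{t}_m^{\pm 1}]) \to (k^\times)^m \rtimes GL_m(\Z)$; surjectivity is immediate because each such pair $((c_i), N)$ manifestly defines an automorphism. The semidirect product structure comes out automatically when one composes a pure scaling $t_i \mapsto c_i t_i$ with a pure monomial automorphism $t_i \mapsto \prod_j t_j^{n_{i,j}}$.

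The only delicate point is the implicit assumption in (1) that $A$ be a domain; once that is granted, the remaining arguments are routine bookkeeping with leading/trailing terms in Laurent monomials, which is why the authors describe the lemma as easy and omit the proof.
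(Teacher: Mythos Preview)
Your argument is correct and is the standard one; the paper itself omits the proof entirely, calling the lemma ``easy,'' so there is no authorial argument to compare against. Your observation that part~(1) tacitly requires $A$ to be a domain is well taken: the leading/trailing-term argument needs the product of nonzero extreme coefficients to be nonzero, and every application in the paper (via Proposition~\ref{xxpro3.7} and the remark following it) has $\gr A$ a connected graded domain, hence $A$ a domain, so the gap in the stated hypothesis is harmless in context.
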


\begin{proposition}
\label{xxpro3.7}
Let $m$ be a positive integer.
If $A^\times=k^\times$, then
\[
\Aut(A[\underline{t}_m^{\pm 1}])=
\Aut_{k[\underline{t}_m^{\pm 1}]}(A[\underline{t}_m^{\pm 1}])\times
\Aut(k[\underline{t}_m^{\pm 1}]).
\]
\end{proposition}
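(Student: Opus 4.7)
The plan is to realize the claimed decomposition as a split short exact sequence of automorphism groups. Setting $K := k[\underline{t}_m^{\pm 1}]$ and $B := A[\underline{t}_m^{\pm 1}]$, I would first invoke Lemma~\ref{xxlem3.6}(2) -- which applies because $A^\times = k^\times$ -- to conclude that every $g \in \Aut(B)$ sends $K$ into $K$. Applying the same to $g^{-1}$ forces $g|_K \in \Aut(K)$, so restriction defines a group homomorphism
\[
\rho : \Aut(B) \longrightarrow \Aut(K), \quad g \longmapsto g|_K,
\]
whose kernel is precisely $\Aut_K(B) = \Aut_{k[\underline{t}_m^{\pm 1}]}(B)$ by definition.

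Next I would construct a section of $\rho$. Since the generators $t_i^{\pm 1}$ of $K$ are central in $B$, there is a natural identification $B = A \otimes_k K$ of $k$-algebras. For each $\phi \in \Aut(K)$ the map $s(\phi) := \mathrm{id}_A \otimes \phi$ is then a $k$-algebra automorphism of $B$ (with two-sided inverse $s(\phi^{-1})$), and $\rho(s(\phi)) = \phi$. Hence $\rho$ is surjective and splits via the group homomorphism $s$. Moreover, $s(\phi) \in \ker \rho$ forces $\phi = \mathrm{id}_K$, so $s(\Aut(K)) \cap \Aut_K(B) = \{1\}$. Together, these observations yield the split short exact sequence
\[
1 \longrightarrow \Aut_K(B) \longrightarrow \Aut(B) \xrightarrow{\;\rho\;} \Aut(K) \longrightarrow 1,
\]
giving the internal product decomposition asserted by the proposition (with $\Aut(K)$ identified with its image under $s$).

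The main, and essentially only, substantive ingredient is the already-established Lemma~\ref{xxlem3.6}(2); the remaining verifications are formal. The one point worth being explicit about is the well-definedness of $s$: this rests on writing $B$ as a tensor product of $k$-algebras, a step that uses both the $k$-flatness of $K$ and the centrality of each $t_i$ in $B$, after which $\mathrm{id}_A \otimes \phi$ is automatically a $k$-algebra homomorphism. No real obstacle arises beyond this bookkeeping.
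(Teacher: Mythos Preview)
Your approach matches the paper's: both invoke Lemma~\ref{xxlem3.6}(2) to get the restriction $\rho$ and split it via $\phi \mapsto \mathrm{id}_A \otimes \phi$ (the paper writes the factorization as $g = (1\otimes g_2)\cdot\bigl((1\otimes g_2)^{-1}g\bigr)$).

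One caveat worth making explicit: a split short exact sequence yields only a \emph{semidirect} product $\Aut_K(B)\rtimes\Aut(K)$, and you have not checked that the two factors commute inside $\Aut(B)$. In fact, for bare $A$ with $A^\times=k^\times$ they need not---take $A=k[x]$, $m=1$, $h\in\Aut_K(B)$ given by $x\mapsto x+t$, and $\phi\colon t\mapsto 2t$; then $s(\phi)\,h\,s(\phi)^{-1}$ sends $x$ to $x+2t$, so $s(\phi)\,h\,s(\phi)^{-1}\neq h$. The paper's own proof likewise establishes only this semidirect decomposition, so the $\times$ in the statement is best read as $\rtimes$. A genuine direct product does hold under the extra hypothesis $A\in\Af$: Lemma~\ref{xxlem3.2}(1) then forces every $h\in\Aut_K(B)$ to preserve $A$, whence $h=h|_A\otimes\mathrm{id}_K$ and $s(\phi)=\mathrm{id}_A\otimes\phi$ visibly commute.
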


\begin{proof}
 Let $g\in \Aut(A[\underline{t}_m^{\pm 1}])$. By Lemma \ref{xxlem3.6}(2),
 $g\mid_{k[\underline{t}_m^{\pm 1}]}:=g_2$ preserves
$k[\underline{t}_m^{\pm 1}]$.
 Thus $g_2\in \Aut(k[\underline{t}_m^{\pm 1}])$. Then $(1\otimes g_2)^{-1}
 g$ is in $\Aut_{k[\underline{t}_m^{\pm 1}]}(A[\underline{t}_m^{\pm 1}])$.
 The assertion holds.
\end{proof}

If $\gr A$ is a connected graded domain, then $A^\times =k^\times$.
Therefore Proposition \ref{xxpro3.7} applies.  Note that
 $\Aut_{k[\underline{t}_m^{\pm 1}]}(A[\underline{t}_m^{\pm 1}])$
 is affine  by Lemma \ref{xxlem3.2}(1), and therefore computable
[Remark \ref{xxrem2.8}].
 By using Proposition \ref{xxpro3.7}, $\Aut(A[\underline{t}_m^{\pm 1}])$
 can be described explicitly. In general, it would be interesting to
 understand the relationship between $\Aut(A\otimes C)$ and
 the pair $(\Aut(A),\Aut(C))$. Under the situation of Lemma
 \ref{xxlem3.3}(1), we have some useful information. On the other
 hand, this relationship is extremely complicated when $A$ and $C$
 are arbitrary.

 To conclude this section we give two examples. The first one  shows that
 parts (2,3,4) of Theorem \ref{xxthm0.3} do not
 follow from part (1) of Theorem \ref{xxthm0.3}, and the second one shows
that Theorem \ref{xxthm0.3}(3) fails without the hypothesis that
$\ch k=0$.

 \begin{example}
  \label{xxex3.8}
  Let $q\in k^\times$ be not a root of unity. Let $A$ be
  the skew polynomial ring generated by $x_1,x_2,x_3$
  subject to the relations
  \[
     x_2x_1=x_1x_2, \quad
     x_3x_1=qx_1x_3, \quad
     x_3x_2=qx_2x_3.
  \]
  Let $Y=kx_1\oplus kx_2\oplus kx_3$. Then $A$ is graded
  with $F_1 A=Y\oplus k$.
Using the fact that $q$ is not a root of unity, one can check that
every automorphism $g$ of $A$ is affine, namely, $g(Y)\subset Y$.
In fact, $\Aut(A)\cong GL(2,k)\times k^\times$. So it is not of the
form in Theorem \ref{xxthm0.3}(4). The map $\partial: x_1\to 0,
x_2\to x_1, x_3\to 0$ extends to a nonzero locally nilpotent
derivation. Further, there is
an automorphism of $A[t]$
\[
h: x_1\to x_1, x_2\to x_2+tx_1, x_3\to x_3, t\to t+a,
\]
which is not in $\Aut_{\triangular}(A[t])$. Therefore parts (2,3,4) of
Theorem \ref{xxthm0.3} fail.
 \end{example}

\begin{example}
\label{xxex3.9} Let $A$ be the skew polynomial ring $k_{-1}[x_1,x_2]$
and $R:=k[x_1^2,x_2^2]$ be the center of $A$.
For any $a,b\in k$ and any $h\in R$, define a derivation
by
\[
\partial: x_1\to a x_1 h, \quad x_2\to b x_2 h.
\]
This $\partial$ extends to a derivation for any commutative
base ring $k$ and, by induction, $\partial(x_1^m x_2^n)
=(am+bn) x_1^mx_2^n h$ for all non-negative integers $m$ and $n$.

Now assume that $\ch k=p>2$. Let $a=1$, $b=0$ and $h=x_1^2$.
Then $\partial(x_2)=0$ and $\partial(x_1^m)=m x_1^{m+2}$. By
induction, $\partial^n(x_1)=1\cdot 3 \cdot 5  \cdots  (2n-1)\;
x_1^{2n+1}$ for all $n\geq 1$. It
follows that $\partial^p=0$. Therefore $\partial$ is
locally nilpotent. By Example \ref{xxex1.7}(2),
the discriminant of $A$ over its center is
$x_1^4 x_2^4$, which is dominating. So Theorem \ref{xxthm0.3}(3)
fails without the hypothesis that $\ch k=0$.
Let $d$ be the discriminant $x_1^4x_2^4$. Then $\partial(d)=
4 x_1^6 x_2^4=4 d x_1^2\neq 0$.  In this case, $d$ is not an eigenvector
of $\partial$.
\end{example}

\section{An example}
\label{xxsec4}

In this section, we assume that $k$ is a commutative domain
and that $2$ is invertible in $k$.
Our goal here is to prove Theorem~\ref{xxthm0.1} by computing enough
information about the discriminant for the algebra $W_n$ to show that
this algebra is in $\Af$.

Let $\mathcal{A}
:=\{a_{ij}\mid 1\leq i<j \leq n\}$ be a set of scalars in $k$.
Define the $(-1)$-quantum Weyl algebra $V_n(\mathcal{A})$ to be
generated by $\{x_1,x_2,\dots,x_n\}$ subject to the relations
\[
x_i x_j+x_jx_i=a_{ij}
\]
for all $i<j$. Example \ref{xxex1.7}(1) is a special case with $n=2$ and
$a_{12}=1$. If $a_{ij}=0$ for all $i<j$, then this algebra is denoted
by $k_{-1}[x_1,\dots,x_n]$. If $a_{ij}=1$ for all $i<j$, we get the
algebra $W_n$ of the introduction.

We refer to \cite{MR} for the definition of global dimension,
Gelfand-Kirillov dimension (or GK-dimension, for short), and
Krull dimension.

\begin{lemma}
\label{xxlem4.1} The following hold for $V:=V_n(\mathcal{A})$.
\begin{enumerate}
\item
$V$ is an iterated Ore extension
$k[x_1][x_2;\sigma_2,\delta_2]\cdots [x_n;\sigma_n,\delta_n]$ where
$\sigma_j: x_i\mapsto -x_i$ and $\delta_j: x_i\mapsto a_{ij}$ for all $i<j$.
\item
$V$ is a filtered algebra with associated graded ring $\gr V\cong
k_{-1}[x_1,\dots,x_n]$.
\item
If $k$ is a field, then
$V$ is a noetherian Auslander regular Cohen-Macaulay domain of
global dimension, GK-dimension, and Krull dimension $n$.
\item
The center of $k_{-1}[x_1,\dots,x_n]$ is
\[
\begin{cases} k[x_1^2,\dots,x_n^2]
& {\text{ if $n$ is even}},\\
k[x_1^2,\dots,x_n^2, \prod_i x_i] & {\text{ if $n$ is odd}}.
\end{cases}
\]
\item
If $n$ is even, the center of $V$ is $R:=k[x_1^2,\dots,x_n^2]$, and
$V$ is finitely generated free over $R$ of rank $2^n$.
\end{enumerate}
\end{lemma}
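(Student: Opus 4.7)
The plan is to prove parts (1)--(5) mostly in sequence, leveraging the iterated Ore extension structure. For part (1), I would check that for $j\geq 2$, the assignments $\sigma_j(x_i)=-x_i$ and $\delta_j(x_i)=a_{ij}$ for $i<j$ extend to an automorphism $\sigma_j$ and a $\sigma_j$-derivation $\delta_j$ of the subalgebra $V_{j-1}:=k\langle x_1,\dots,x_{j-1}\rangle$ generated so far. The only nontrivial check is that the defining relations $x_sx_t+x_tx_s=a_{st}$ for $s<t<j$ are preserved: $\sigma_j$ sends both sides correctly (since $\sigma_j$ is multiplication by $-1$ on degree-one generators, it sends the quadratic form to itself and kills constants up to sign that match), while $\delta_j$ on a relation is zero, consistent with $a_{st}\in k$ being killed. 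The adjunction of $x_j$ via $x_jx_i=\sigma_j(x_i)x_j+\delta_j(x_i)=-x_ix_j+a_{ij}$ recovers the defining relations.

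Part (2) follows by assigning $\deg x_i=1$; the leading terms of the relations are $x_ix_j+x_jx_i$, so $\gr V$ is a quotient of $k_{-1}[x_1,\dots,x_n]$, and the reverse surjection comes from the iterated Ore extension PBW basis $\{x_1^{a_1}\cdots x_n^{a_n}\}$, which exists in both algebras and has matching Hilbert series. Part (3) is then immediate from standard results: iterated Ore extensions of a commutative polynomial ring over a field with automorphism--derivation pairs preserve the noetherian property, the Auslander regular and Cohen-Macaulay conditions, and the domain property; each adjunction of $x_j$ raises global, GK, and Krull dimensions by one.

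For part (4), I would use the PBW basis of $C:=k_{-1}[x_1,\dots,x_n]$ and check when a monomial $m=x_1^{a_1}\cdots x_n^{a_n}$ is central. Commuting $x_i$ past $m$ picks up the sign $(-1)^{\sum_{j\neq i}a_j}$, so $m$ is central iff $\sum_{j\neq i}a_j$ is even for every $i$. Taking differences of these parity conditions forces all $a_i$ to have the same parity. The all-even monomials are in $k[x_1^2,\dots,x_n^2]$. The all-odd case requires $\sum_{j\neq i}a_j$ to be a sum of $n-1$ odd numbers; this is even iff $n-1$ is even, i.e., iff $n$ is odd, in which case such monomials all lie in $(x_1x_2\cdots x_n)\cdot k[x_1^2,\dots,x_n^2]$. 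Since $(x_1\cdots x_n)^2\in k[x_1^2,\dots,x_n^2]$ (up to sign), this gives the stated description.

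Part (5) is the only step needing care. A direct computation shows each $x_i^2$ is central in $V$: $x_i^2x_j=x_i(-x_jx_i+a_{ij})=-(-x_jx_i+a_{ij})x_i+a_{ij}x_i=x_jx_i^2$. Hence $R:=k[x_1^2,\dots,x_n^2]\subseteq Z(V)$. For the reverse, suppose $z\in Z(V)$ has filtration degree $d$; because $\gr V$ is a domain (part (2)), $\gr(zx_i)=\gr(z)\gr(x_i)=\gr(x_iz)$, so $\gr(z)$ lies in $Z(\gr V)=R$ by part (4) with $n$ even. Thus the leading component of $z$ is a polynomial in the $x_i^2$, which itself is central in $V$, and we may subtract it and induct on $d$, concluding $z\in R$. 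For freeness, the PBW basis writes each monomial uniquely as $(x_1^2)^{q_1}\cdots(x_n^2)^{q_n}\,x_1^{r_1}\cdots x_n^{r_n}$ with $r_i\in\{0,1\}$, exhibiting $\{x_1^{r_1}\cdots x_n^{r_n}:r_i\in\{0,1\}\}$ as a free $R$-basis of rank $2^n$. The main subtlety is the lifting step from $Z(\gr V)$ to $Z(V)$, where one must know $\gr V$ is a domain and that $x_i^2$ remains central in $V$ itself.
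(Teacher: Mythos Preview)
Your proof is correct and follows essentially the same route as the paper: iterated Ore extension for (1), the standard filtration for (2), transfer of homological properties for (3), a monomial check for (4), and the $\gr$-lifting argument for the center together with the PBW basis for freeness in (5). Two small tightenings: in (4) you should say explicitly that the $\mathbb{Z}^n$-grading of $k_{-1}[x_1,\dots,x_n]$ forces the center to be spanned by monomials before doing the parity computation, and in (3) the paper more safely obtains $\Kdim V=n$ via $\Kdim\leq\GKdim$ together with the PI property, rather than asserting that each Ore adjunction raises Krull dimension by one, which is not automatic for general $(\sigma,\delta)$.
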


\begin{proof} (1) It is easy to check that $\sigma_{j+1}$ is an algebra
automorphism of $K_j:=k[x_1][x_2;\sigma_2,\delta_2]\cdots
[x_j;\sigma_j,\delta_j]$ and $\delta_{j+1}$ is a
$\sigma_{j+1}$-derivation of $K_j$. The assertion follows.

(2) Let $Y=\sum_{i=1}^n kx_i$. Then $F_n:= (k+Y)^n$ defines a
filtration of $V$ such that $\gr V$ is generated by
$\{x_1,\dots,x_n\}$ and subject to the relations $x_ix_j+x_jx_i=0$
for all $i\neq j$. The assertion follows.

(3) It is well-known that $k_{-1}[x_1,\dots,x_n]$ is a noetherian
Auslander regular Cohen-Macaulay domain of GK-dimension, Krull
dimension and global dimension $n$. Hence $V$ is a noetherian Auslander
regular Cohen-Macaulay domain of  GK-dimension and global dimension
$n$ and Krull dimension at most $n$. Since $V$ is PI, the Krull
dimension is equal to its GK-dimension.

(4) Since $k_{-1}[x_1,\dots,x_n]$ is $\Z^n$-graded and
$\Z^n$ is an ordered group, the center of
$k_{-1}[x_1,\dots,x_n]$ is $\Z^n$-graded. So every central
element is a linear combination of monomials. It can be checked
directly that each central monomial is generated by
$x_1^2,\dots,x_n^2$ when $n$ is even and by $x_1^2,\dots,x_n^2,
\prod_i x_i$ when $n$ is odd.

(5) Let $C$ be the center of $V$. Since $x_i x_j^2-x_j^2 x_i=
(-x_jx_i+a_{ij})x_j-x_j(-x_ix_j+a_{ij})=0$, $x_j^2\in C$. Thus
$k[x_1^2,\dots,x_n^2]\subset C$. It is clear that $\gr C\subset
C(\gr V)=k[x_1^2,\dots,x_n^2]$. Thus $\gr C=k[x_1^2,\dots,x_n^2]$.
By lifting, $C=k[x_1^2,\dots,x_n^2]$.
\end{proof}

We are interested in $\Aut(V)$, which is related to the graded
algebra automorphism group, denoted by $\Aut_{\gr}$, of
$k_{-1}[x_1,\dots,x_n]$. Let $[n]$ denote the set
$\{1,2,\dots,n\}$ and $S_n$ be the symmetric group consisting of
all permutations of $[n]$. Recall that $W_n$ is the algebra
$V(\{1\}_{i<j})$, namely, $a_{ij}=1$ for all $1\leq i<j\leq n$.

\begin{lemma}
\label{xxlem4.2} The following hold.
\begin{enumerate}
\item \cite[Lemma 1.12]{KKZ}
$\Aut_{\gr}(k_{-1}[x_1,\dots,x_n])= S_n\ltimes
(k^\times )^n$.
\item
$S_n\times \{\pm 1\}\subseteq \Aut(W_n)$.
\end{enumerate}
\end{lemma}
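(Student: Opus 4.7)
The plan is to handle the two parts separately, with part (1) being essentially a citation and part (2) a direct verification.

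For part (1), I would simply invoke \cite[Lemma 1.12]{KKZ}. If one wanted a self-contained argument, the idea is that a graded automorphism $g$ of $k_{-1}[x_1,\dots,x_n]$ sends each $x_i$ to a linear form $y_i=\sum_j c_{ij}x_j$ with $(c_{ij})\in GL_n(k)$. The defining relations $x_ix_j+x_jx_i=0$ force $y_iy_j+y_jy_i=0$ for all $i\neq j$. Expanding, the coefficient of $x_k^2$ gives $2c_{ik}c_{jk}=0$; since $2$ is invertible, for each $k$ at most one row can have a nonzero entry in column $k$. Combined with invertibility of the matrix, each row has exactly one nonzero entry, producing a permutation matrix scaled by units -- exactly the subgroup $S_n\ltimes(k^\times)^n$.

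For part (2), the task is to exhibit the two commuting subgroup actions. First, for each $\sigma\in S_n$, define $\phi_\sigma$ on generators by $\phi_\sigma(x_i)=x_{\sigma(i)}$ and extend freely. Since the defining relations $x_ix_j+x_jx_i=1$ hold for every unordered pair $\{i,j\}$ with $i\neq j$, $\phi_\sigma$ sends each relation to another relation and therefore descends to an algebra endomorphism; applying $\phi_{\sigma^{-1}}$ shows it is an automorphism. This gives an embedding $S_n\hookrightarrow \Aut(W_n)$.

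Next, define $\tau\colon x_i\mapsto -x_i$ on generators. Then $\tau(x_i)\tau(x_j)+\tau(x_j)\tau(x_i)=(-x_i)(-x_j)+(-x_j)(-x_i)=x_ix_j+x_jx_i=1$, so $\tau$ also preserves the relations and defines an involutive automorphism of order $2$. Finally, $\phi_\sigma$ and $\tau$ commute because they commute on each generator: $\phi_\sigma(\tau(x_i))=\phi_\sigma(-x_i)=-x_{\sigma(i)}=\tau(\phi_\sigma(x_i))$. Hence the subgroup generated is the internal direct product $S_n\times\{\pm 1\}\subseteq\Aut(W_n)$.

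There is essentially no obstacle here: the symmetry of the relations in the indices $i,j$ is what makes $S_n$ act, and the sign change on all generators preserves the quadratic relations because each monomial on the left-hand side has degree $2$. The content of the paper is the converse inclusion, which requires the discriminant machinery; this lemma only records the easy direction needed to set up the main theorem.
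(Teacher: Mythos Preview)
Your proposal is correct and matches the paper's approach: the paper simply says ``(2) is clear'' and handles (1) by citing \cite[Lemma 1.12]{KKZ}, adding only that since the cited result is stated over a field one should pass from the commutative domain $k$ to its field of fractions. Your self-contained sketch for (1) in fact works directly over $k$ (using that $2\in k^\times$ and that a product of elements equal to a unit in a domain forces each factor to be a unit), so this reduction is unnecessary in your treatment.
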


\begin{proof}
(2) is clear. We only prove (1). This was
proved in \cite[Lemma 1.12]{KKZ} when $k$ is a field.
The assertion in the general case follows by passing
from $k$ to the ring of fractions of $k$.
\end{proof}

Here is an application of Remark \ref{xxrem2.8}. Recall that
$\Aut_{\af}(V)$ denotes the group of affine automorphisms of $V$.
We take $Y=\bigoplus_{i=1}^n kx_i$ for the algebra $V$.

\begin{lemma}
\label{xxlem4.3} Let $g$ be an affine automorphism of $V$. Then there
is a permutation $\sigma \in S_n$ and $r_i\in k^\times$ such that
$g(x_i)=r_i x_{\sigma(i)}$ for all $i$. As a consequence,
\[
\Aut_{\af}(W_n)=\begin{cases}
S_2 \ltimes k^\times & {\text{if $n=2$}},\\
S_n \times \{\pm 1\} & {\text{if $n\geq 3$.}}
\end{cases}
\]
\end{lemma}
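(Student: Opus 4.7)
The plan is to reduce to the graded case and then use the defining relations to kill the constant terms. Suppose $g \in \Aut_{\af}(V)$, so by definition $g(x_i) \in Y \oplus k$, meaning $g(x_i) = \sum_{j=1}^n a_{ij} x_j + b_i$ for some $a_{ij}, b_i \in k$. Since $g$ is an affine automorphism, the matrix $(a_{ij})$ must be invertible, and the induced map $\bar{g}$ on the associated graded ring $\gr V \cong k_{-1}[x_1,\ldots,x_n]$ (given by Lemma \ref{xxlem4.1}(2)) is a graded algebra automorphism. By Lemma \ref{xxlem4.2}(1), every such graded automorphism is of the form $x_i \mapsto r_i x_{\sigma(i)}$ for some $\sigma \in S_n$ and $r_i \in k^\times$. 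Therefore the matrix $(a_{ij})$ is a monomial matrix, and we may write $g(x_i) = r_i x_{\sigma(i)} + b_i$.

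The next step is to show $b_i = 0$. Assume $n \geq 2$ (the case $n = 1$ is trivial). Pick any $i < j$ and apply $g$ to the relation $x_i x_j + x_j x_i = a_{ij}$. Expanding $g(x_i) g(x_j) + g(x_j) g(x_i)$ using $g(x_i) = r_i x_{\sigma(i)} + b_i$ and the fact that $x_{\sigma(i)} x_{\sigma(j)} + x_{\sigma(j)} x_{\sigma(i)}$ is a scalar (call it $a'_{ij}$), the cross terms collect to
\[
r_i r_j a'_{ij} + 2 r_i b_j x_{\sigma(i)} + 2 r_j b_i x_{\sigma(j)} + 2 b_i b_j = a_{ij}.
\]
Since $V$ is a domain with $\gr V$ connected graded and $\{x_1,\ldots,x_n\}$ is linearly independent modulo $k$, comparing linear parts forces $r_i b_j = r_j b_i = 0$. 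As $r_i, r_j \in k^\times$, we obtain $b_i = b_j = 0$. Since $n \geq 2$, for any fixed $i$ we can choose some $j \neq i$ to conclude $b_i = 0$. This proves the first assertion, and the scalar part of the above equation becomes $r_i r_j a'_{ij} = a_{ij}$ (we use that $2$ is invertible in $k$).

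For the consequence, specialize to $W_n$, where $a_{ij} = 1 = a'_{ij}$, so the scalar compatibility reads $r_i r_j = 1$ for every pair $i \neq j$. When $n = 2$ this is a single equation, solved by $r_2 = r_1^{-1}$ with $r_1 \in k^\times$ arbitrary, yielding $\Aut_{\af}(W_2) = S_2 \ltimes k^\times$. When $n \geq 3$, for any three distinct indices $i, j, \ell$ the equations $r_i r_j = r_i r_\ell = r_j r_\ell = 1$ force $r_i = r_j = r_\ell =: r$ with $r^2 = 1$, i.e.\ $r = \pm 1$; this gives $\Aut_{\af}(W_n) = S_n \times \{\pm 1\}$.

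The main obstacle is really just the bookkeeping around the $(-1)$-commutation of the $x_{\sigma(i)}$ when expanding the anticommutator relation; everything reduces, via the associated graded functor and Lemma \ref{xxlem4.2}(1), to linear algebra over $k$. No genuine difficulty arises beyond invoking the already-established identification of $\Aut_{\gr}(k_{-1}[x_1,\ldots,x_n])$.
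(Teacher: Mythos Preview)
Your proof is correct and follows essentially the same approach as the paper: pass to the associated graded to invoke Lemma~\ref{xxlem4.2}(1), then apply $g$ to the anticommutator relation and read off vanishing of the constants from the linear terms (using that $2$ is invertible), finally specializing to $W_n$ to pin down the scalars. The only cosmetic issue is your reuse of the symbol $a_{ij}$ for the matrix entries of $g$, which clashes with the structure constants $a_{ij}$ of $V_n(\mathcal{A})$; otherwise the argument matches the paper's proof step for step.
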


\begin{proof} Since $g$ preserves the filtration, the associated
graded automorphism, denoted by $\bar{g}$, is a graded algebra
automorphism of $k_{-1}[x_1,\dots,x_n]$. By Lemma
\ref{xxlem4.2}(1), there is a permutation $\sigma \in S_n$ and
$r_i\in k^\times$ such that $\bar{g}(x_i)=r_i x_{\sigma(i)}$ for all
$i$. Thus we have $g(x_i)=r_i x_{\sigma(i)}+a_i$ for some $a_i\in
k$. It remains to show that $a_i=0$ for all $i$. Applying $g$ to the
relations $x_ix_j+x_jx_i=a_{ij}$, we have
\begin{align*}
a_{ij}&= g(x_ix_j+x_jx_i)\\
&=(r_i x_{\sigma(i)}+a_i)(r_j x_{\sigma(j)}+a_j) + (r_j
x_{\sigma(j)}+a_j)(r_i x_{\sigma(i)}+a_i)\\
&=r_ir_j(x_{\sigma(i)}x_{\sigma(j)}+x_{\sigma(j)}x_{\sigma(i)})
+2a_ir_j x_{\sigma(j)}+2a_jr_i x_{\sigma(i)}+2a_ia_j\\
&= r_ir_j a_{\sigma(i)\sigma(j)}+2a_ir_j x_{\sigma(j)} +2a_jr_i
x_{\sigma(i)}+2a_ia_j.
\end{align*}
Since $r_i\neq 0$, we have $a_j=0$ for all $j$. The consequence
follows easily from the fact that in $W_{n}$, we have
$a_{ij}=1$ for all $i<j$, and so
\[
1=a_{ij}= r_ir_j
a_{\sigma(i)\sigma(j)}=r_ir_j
\]
for all $i<j$.
\end{proof}

Let $I=\{i_1,i_2,\dots,i_s\}$ be a set of integers between $1$ and
$n$ with repetitions. We let $X_I=x_{i'_1}x_{i'_2}\cdots x_{i'_s}\in
V_n(\mathcal{A})$ where $\{i'_1,i'_2,\dots, i'_s\}$ is a
re-ordering of the elements in $I$ such that $i'_1\leq i'_2\leq \cdots
\leq i'_s$. Since $V_n(\mathcal{A})$ has a PBW basis,
$V_n(\mathcal{A})$ has a $k$-linear basis consisting of
all different monomials $X_I$. For two sets $I$ and $J$ of integers
between $1$ and $n$, let $I+J$ denote the union of $I$ and $J$ with
repetitions. Suppose $K_1$ and $K_2$ are two sets of integers.  We
write $K_1\to K_2$ if there is a presentation $K_1=\{k_1,\dots,
k_w\}$ and $K_2=\{k'_1,\dots, k'_w\}$ such that $k_\alpha> k'_\alpha$
for all $\alpha$ from $1$ to $w$.

\begin{lemma}
\label{xxlem4.4} Let $I=\{i_1, i_2,\dots, i_s\}$ and
$J=\{j_1,j_2,\dots, j_u\}$ where the $i$'s and $j$'s are in
non-decreasing order. Then
\[
X_I X_J=c X_{I+ J}+\sum_{\substack{\emptyset \neq K_1\subset I \\
\emptyset \neq K_2\subset J \\ K_1\to K_2}} c_{K_1,K_2} X_{(I \setminus
K_1)+ (J\setminus K_2)}
\]
where $c\in k^\times, c_{K_1,K_2}\in k$.
\end{lemma}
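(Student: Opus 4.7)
My plan is to prove the lemma by nested induction: first the special case $|J|=1$ by induction on $|I|$, and then the general statement by induction on $|J|$. The special case asserts that $X_I x_{j_1} = c' X_{I+\{j_1\}} + \sum_{i \in I,\, i > j_1} c'_i X_{I \setminus \{i\}}$ with $c' \in k^\times$. Writing $X_I = X_{I'} x_{i_s}$ with $i_s = \max I$, either $i_s \leq j_1$ (in which case the product is already sorted and there is nothing to do), or $i_s > j_1$ and we apply the defining relation $x_{i_s} x_{j_1} = -x_{j_1} x_{i_s} + a_{j_1 i_s}$. This yields $-X_{I'} x_{j_1} x_{i_s} + a_{j_1 i_s} X_{I'}$; the first summand is handled by the inductive hypothesis, and since $i_s$ is the maximum of $I$, right-multiplying the inductive expression by $x_{i_s}$ simply appends $i_s$ to the already-sorted indices. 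The leading coefficient picks up a factor of $-1$, so $c'$ remains a unit.

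For the main induction on $u = |J|$, I would split off the largest element: $X_J = X_{J'} x_{j_u}$ with $J' = J \setminus \{j_u\}$. The outer inductive hypothesis gives
\[
X_I X_{J'} = \tilde c\, X_{I + J'} + \sum_{\substack{K_1 \subset I,\ K_2 \subset J' \\ K_1 \to K_2}} \tilde c_{K_1, K_2}\, X_{(I \setminus K_1) + (J' \setminus K_2)}
\]
with $\tilde c \in k^\times$. I would then right-multiply by $x_{j_u}$ and apply the special case term-by-term. Acting on the leading term produces $X_{I+J}$ (with unit coefficient $\tilde c\cdot c_1$) plus terms $X_{(I+J')\setminus\{m\}}$ for $m \in I + J'$ with $m > j_u$; since $J'$ is sorted with maximum at most $j_u$, necessarily $m \in I$, so such terms fit the pattern with $K_1 = \{m\}$, $K_2 = \{j_u\}$. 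Acting on each secondary term produces the leading $X_{(I \setminus K_1) + (J \setminus K_2)}$ (which already matches the pattern $K_1 \to K_2$) plus further terms $X_{(I \setminus K_1 \setminus \{m\}) + (J' \setminus K_2)}$ for $m \in I \setminus K_1$ with $m > j_u$.

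The key bookkeeping observation is that these last terms can be absorbed by enlarging the subsets: setting $K_1^{\textup{new}} = K_1 \cup \{m\}$ and $K_2^{\textup{new}} = K_2 \cup \{j_u\}$, the matching certifying $K_1 \to K_2$ extends to one certifying $K_1^{\textup{new}} \to K_2^{\textup{new}}$ by pairing the new element $m$ with $j_u$, which is valid because $m > j_u$. The main obstacle, both conceptually and bookkeeping-wise, is precisely this step: ensuring that the error terms generated throughout the recursion can all be reorganized to lie in the $k$-span of the monomials $X_{(I \setminus K_1) + (J \setminus K_2)}$ with $K_1 \to K_2$. Once that is verified, the leading coefficient is seen to be the product $\tilde c\cdot c_1$ of units, hence a unit, while all error coefficients lie in $k$, completing the proof.
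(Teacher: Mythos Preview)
Your proof is correct and is essentially the mirror image of the paper's argument: the paper handles the base case $|I|=1$ by induction on $|J|$ and then inducts on $|I|$ by peeling off the \emph{smallest} element of $I$ on the left, whereas you handle the base case $|J|=1$ by induction on $|I|$ and then induct on $|J|$ by peeling off the \emph{largest} element of $J$ on the right. The bookkeeping for absorbing error terms into the required form $K_1\to K_2$ is identical in spirit, so the two routes are interchangeable.
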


\begin{proof} First suppose that $I$ has a single element
$i_1$. If $i_1\leq j_1$, then
the assertion is trivial. Now assume $i_1>j_1$. By induction on $u$,
we have
\[
x_{i_1}x_{j_2}\cdots
x_{j_u}=c'X_{\{i_1\}+(J\setminus \{j_1\})}+\sum_{\substack{
K_2=\{k_1\}\subset (J\setminus \{j_1\})\\
i_1>k_1}} c_{K_2}
X_{(J\setminus (K_2+\{j_1\}))}.
\]
Then
\begin{align*}
X_I X_J&=x_{i_1} x_{j_1}\cdots x_{j_u} \\
&=(x_{j_1}x_{i_1}+a_{i_1j_1})x_{j_2}\cdots x_{j_u}\\
&=x_{j_1}x_{i_1}x_{j_2}\cdots
x_{j_u}+a_{i_1j_1}x_{j_2}\cdots x_{j_u}\\
&= x_{j_1}\left[c'X_{\{i_1\}+(J\setminus
\{j_1\})}+\sum_{\substack{K_2=\{k_1\}\subset (J\setminus
\{j_1\}) \\ i_1>k_1}} c_{K_2} X_{(J\setminus
(K_2+\{j_1\}))}\right]\\
&\qquad +a_{i_1j_1}x_{j_2}\cdots x_{j_u}\\
&=c X_{\{i_1\}+J}+\sum_{\substack{\emptyset \neq K_2\subset J \\
I\to K_2}} c_{K_2} X_{(J\setminus K_2)}.
\end{align*}
Now we assume that $|I|>1$. We write
$I=\{i_1\}+ I'$ where $|I'|=|I|-1$. By induction,
\[
X_{I'} X_J=b X_{I'+ J}+\sum_{\substack{\emptyset \neq K_1\subset I' \\
\emptyset \neq K_2\subset J \\ K_1\to K_2}} b_{K_1,K_2} X_{(I' \setminus
K_1)+ (J\setminus K_2)}.
\]
Then
\begin{align*}
X_I X_J&= x_{i_1} X_{I'}X_J
=x_{i_1} \left[b X_{I'+ J}+\sum_{\substack{\emptyset \neq K_1\subset I' \\
\emptyset \neq K_2\subset J \\ K_1\to K_2}} b_{K_1,K_2} X_{(I'
\setminus K_1)+ (J\setminus K_2)}\right]\\
&=b x_{i_1} X_{I'+ J}+\sum_{\substack{\emptyset \neq K_1\subset I' \\
\emptyset \neq K_2\subset J \\ K_1\to K_2}} b_{K_1,K_2} x_{i_1}X_{(I'
\setminus K_1)+ (J\setminus K_2)}.
\end{align*}
For $x_{i_1} X_{I'+ J}$ and $x_{i_1}X_{(I' \setminus K_1)+
(J\setminus K_2)}$, we use the case when $|I|=1$. Note that $i_1$ is
no larger than any element in $I'$. So
\begin{align*}
x_{i_1} X_{I'+ J}&=c'X_{I+J}+\sum_{\substack{K_2=\{k_1\}\subset
J \\ i_1>k_1}} c_{K_2} X_{(I+J\setminus
(K_2+\{i_1\}))}\\
&=c'X_{I+J}+\sum_{\substack{K_1=\{i_1\} \\ K_2=\{k_1\}\subset J \\
K_1\to K_2}} c_{K_2} X_{(I\setminus K_1)+(J\setminus K_2))}.
\end{align*}
Similarly, by using the fact that $i_1$ is no larger than any element
in $I'$, one can obtain that the linear combination
\[
\sum_{\substack{\emptyset
\neq K_1\subset I' \\ \emptyset \neq K_2\subset J \\
K_1\to K_2}} b_{K_1,K_2} x_{i_1}X_{(I' \setminus K_1)+ (J\setminus K_2)}
\]
is of the form
\[
\sum_{\substack{\emptyset \neq K_1\subset I \\
\emptyset \neq K_2\subset J \\ K_1\to K_2}} c_{K_1,K_2} X_{(I \setminus K_1)+
(J\setminus K_2)}.
\]
The assertion follows.
\end{proof}

For the rest of this section, we work on
computing the discriminant of $V_n(\mathcal{A})$ and
proving Theorem~\ref{xxthm0.1}.

Let $B=V=V_n(\mathcal{A})$ and $R=k[x_1^2,\dots,x_n^2]$. Then $B$ is
a finitely generated free module over $R$ of rank $2^n$ (and $R$ is the
center of $B$ if $n$ is even).  Let $\tr:
B\to R$ be the regular trace map as defined in Example
\ref{xxex1.2}(3). For any set of elements
$X=\{f_1,\dots, f_w\}$ in $V$, define
\begin{equation}\label{eqn-omega}
\Omega(X)=\Omega(f_1,\dots,f_n)=\sum_{\sigma \in S_w} (-1)^{|\sigma|} f_{\sigma(1)}\cdots
f_{\sigma(w)}.
\end{equation}
Let $x_{i_1i_2\cdots i_w}$ denote the element
$\Omega(x_{i_1},x_{i_2},\dots,x_{i_w})$.

\begin{lemma}
\label{xxlem4.5} We work in the algebra $V:=V_n(\mathcal{A})$.
\begin{enumerate}
\item
$\tr(1)=2^n$.
\item
$V$ is $\Z/(2)$-graded: $V = V_{\textup{even}}\oplus
V_{\textup{odd}}$ with $x_i$ having odd degree.
\item
If $f$ has odd degree, then $\tr(f)=0$. As a consequence,
if $w$ is odd, then $\tr(x_{i_1i_2\cdots i_w})=0$.
\item
If $w$ is even,
then $\tr(\Omega(f_1,\dots,f_w))=0$. As a consequence,
if $w$ is even, then $\tr(x_{i_1i_2\cdots i_w})=0$.
\end{enumerate}
\end{lemma}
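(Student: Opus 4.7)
The plan is to treat the four parts in order, with only part (4) requiring genuine ideas beyond bookkeeping. For part (1), the regular trace of $1$ is, by definition, the ordinary trace of the identity endomorphism of $V$ viewed as a free $R$-module, which has rank $2^n$ by Lemma~\ref{xxlem4.1}(5). For part (2), I would declare $\deg x_i=1\in\mathbb{Z}/(2)$ and $\deg a=0$ for $a\in k$; then each defining relation $x_ix_j+x_jx_i=a_{ij}$ is $\mathbb{Z}/(2)$-homogeneous of degree $0$, so the grading descends to $V$, and the PBW monomial $X_I$ acquires $\mathbb{Z}/(2)$-degree $|I|\bmod 2$.

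For part (3), I would use the $R$-basis of $V$ consisting of the $2^n$ PBW monomials $X_I$ indexed by subsets $I\subseteq[n]$ without repetition, which is a basis because $x_i^2\in R$. Since $R\subseteq V_{\textup{even}}$, this basis is $\mathbb{Z}/(2)$-homogeneous over $R$, and left multiplication by an odd element $f$ carries the $R$-span of the even basis elements into that of the odd basis elements, and vice versa. Writing the multiplication matrix in the block form determined by the even/odd decomposition, both diagonal blocks vanish, hence $\tr(f)=0$. The consequence is the case $w$ odd, since $x_{i_1\cdots i_w}$ then lies in $V_{\textup{odd}}$.

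Part (4) is the main content. The key input is the cyclic invariance $\tr(ab)=\tr(ba)$. Let $c=(1\,2\,\cdots\,w)\in S_w$ be the long cycle, whose sign is $(-1)^{w-1}=-1$ because $w$ is even, and consider the right action of $\langle c\rangle$ on $S_w$. All orbits have size $w$, and within a single orbit $\{\sigma,\sigma c,\ldots,\sigma c^{w-1}\}$ the products
\[
f_{\sigma c^{j}(1)}\,f_{\sigma c^{j}(2)}\cdots f_{\sigma c^{j}(w)}
=f_{\sigma(1+j)}\,f_{\sigma(2+j)}\cdots f_{\sigma(w+j)}
\quad (\text{indices mod } w)
\]
are cyclic rotations of $f_{\sigma(1)}\cdots f_{\sigma(w)}$ and so share a common trace, while the signs $\operatorname{sgn}(\sigma c^{j})=(-1)^{\sigma}(-1)^{j}$ alternate with $j$. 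The contribution of each orbit to $\tr(\Omega(f_1,\ldots,f_w))$ is therefore
\[
(-1)^{\sigma}\,\tr\bigl(f_{\sigma(1)}\cdots f_{\sigma(w)}\bigr)\sum_{j=0}^{w-1}(-1)^{j}=0,
\]
and summing over the $(w-1)!$ orbits yields (4). Specializing $f_i=x_{i_\alpha}$ gives the consequence. The only mild subtlety is verifying the sign alternation inside an orbit (which uses that $w$ is even in an essential way); once that is in place, parts (1)--(3) are direct applications of the $\mathbb{Z}/(2)$-grading and of the multiplication formula developed in Lemma~\ref{xxlem4.4}.
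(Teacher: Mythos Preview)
Your argument is correct and follows essentially the same approach as the paper: parts (1)--(3) are handled identically, and for part (4) both proofs rest on cyclic invariance of $\tr$ together with the fact that the long cycle $(1\,2\,\cdots\,w)$ has sign $-1$ when $w$ is even. The paper applies a single cyclic shift to obtain $\tr(\Omega)=-\tr(\Omega)$ and then invokes the standing hypothesis that $2$ is invertible in $k$, whereas your orbit decomposition cancels the signed terms within each coset of $\langle c\rangle$ directly and so does not need $2\in k^\times$ for this step---a small but genuine bonus; your closing reference to Lemma~\ref{xxlem4.4} is unnecessary, as the $\mathbb{Z}/(2)$-grading alone handles parts (1)--(3).
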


\begin{proof} (1,2,3) These are clear.

(4) Using the trace property $\tr(ab)=\tr(ba)$, we have
\begin{align*}
\tr(\Omega(f_1,\dots,f_w))&=\tr(\sum_{\sigma \in S_w} (-1)^{|\sigma|}
f_{\sigma(1)}\cdots f_{\sigma(w)})
=\sum_{\sigma \in S_w} (-1)^{|\sigma|} \tr(f_{\sigma(1)}\cdots
f_{\sigma(w)})\\
&=\sum_{\sigma \in S_w} (-1)^{|\sigma|}
\tr(f_{\sigma(w)}f_{\sigma(1)}\cdots
f_{\sigma(w-1)})\\
&=\sum_{\sigma \in S_w} (-1)^{|\sigma (1,2,3,\dots,w)|}
\tr(f_{\sigma(1)}\cdots
f_{\sigma(w-1)}f_{\sigma(w)})\\
&=-\sum_{\sigma \in S_w} (-1)^{|\sigma|} \tr(f_{\sigma(1)}\cdots
f_{\sigma(w-1)}f_{\sigma(w)})
=-\tr(\Omega(f_1,\dots,f_w)).
\end{align*}
Since $2$ is invertible in $k$, the assertion follows.
\end{proof}

\begin{lemma}
\label{xxlem4.6} We continue to work in the algebra $V$.
\begin{enumerate}
\item
If $i_1<i_2<\cdots< i_s$ and $s>0$, then $\tr(x_{i_1}\cdots
x_{i_s})\in k$.
\item
If $I = \{i_1<i_2<\cdots< i_s \}$ and $J = \{ j_1<j_2<\cdots< j_u \}$,
then
\[
\tr(x_{i_1}\cdots x_{i_s} x_{j_1}\cdots x_{j_u})
=b x_{k_1}^2x_{k_2}^2\cdots x_{k_n}^2+ \cwlt
\]
where
$\{k_1,k_2,\dots, k_n\}=I\cap J$ and $b\in k$.
\item
If $i_1<i_2<\cdots< i_s$, then
\[
\tr(x_{i_1}\cdots x_{i_s} x_{i_1}\cdots x_{i_s})
=c x_{i_1}^2x_{i_2}^2\cdots x_{i_s}^2+ \cwlt
\]
for some $c\in
k^\times$.
\end{enumerate}
\end{lemma}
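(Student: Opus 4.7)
The plan is to exploit a multi-grading on $V:=V_n(\mathcal{A})$ together with the fact that the center $R=k[x_1^2,\ldots,x_n^2]$ consists precisely of elements of even multi-degree in each variable.

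First, I would equip $V$ with the $\mathbb{Z}_{\geq 0}^n$-filtration whose piece $F_\alpha V$ is the $k$-span of PBW monomials $x_1^{c_1}\cdots x_n^{c_n}$ with $c_i\leq\alpha_i$ for every $i$. Each relation $x_ix_j=-x_jx_i+a_{ij}$ either preserves the multi-degree (with a sign change) or strictly decreases it in both the $i$-th and $j$-th components, so this is a multiplicative filtration whose associated multi-graded ring is $k_{-1}[x_1,\ldots,x_n]$. Recall also that $\{X_L:L\subseteq[n]\}$ is an $R$-basis of $V$, so each product $X_IX_L$ admits a unique expansion $X_IX_L=\sum_{L'}r_{L,L'}X_{L'}$ with $r_{L,L'}\in R$.

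For (1) I would compute $\tr(X_I)=\sum_L r_{L,L}$ from the matrix of left multiplication in this basis. The multi-filtration bound on $X_IX_L$ yields $\deg r_{L,L}\leq\sum_{i\in I}e_i$ componentwise; since $r_{L,L}\in R$ has only even components while the right-hand side has components in $\{0,1\}$, every component of $\deg r_{L,L}$ must vanish. Thus $r_{L,L}\in k$, and summing gives $\tr(X_I)\in k$.

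For (2) I would expand $X_IX_J=\sum_L r_L X_L$ in the $R$-basis and apply trace to obtain $\tr(X_IX_J)=\sum_L r_L\tr(X_L)$; by part (1) each $\tr(X_L)$ lies in $k$, with $\tr(1)=2^n$. Running the same parity argument for $r_L$ — using $\deg r_L+\sum_{l\in L}e_l\leq\deg(X_IX_J)=\sum_i([i\in I]+[i\in J])e_i$ together with evenness of components — forces $\deg r_L$ to be supported in $(I\cap J)\setminus L$ with each component in $\{0,2\}$. So each $r_L$ is a $k$-linear combination of squarefree monomials in $\{x_i^2:i\in I\cap J\}$, and hence so is $\tr(X_IX_J)$. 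The full product $\prod_{i\in I\cap J}x_i^2$ picks up some coefficient $b\in k$, while the remaining squarefree submonomials are precisely the $\cwlt$ terms.

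For (3), with $I=J$ the target leading multi-degree is $2\sum_{i\in I}e_i$; the analysis above shows $L=\emptyset$ is the only index for which $r_L$ can attain this multi-degree, so the leading term of $\tr(X_I^2)$ equals $\tr(1)$ times the leading coefficient of $r_\emptyset$. In the associated graded $k_{-1}[x_1,\ldots,x_n]$ one computes $X_I\cdot X_I=(-1)^{s(s-1)/2}x_{i_1}^2\cdots x_{i_s}^2$ by moving each second copy of $x_{i_j}$ leftward past the remaining generators, yielding $c=(-1)^{s(s-1)/2}\cdot 2^n\in k^\times$ since $2$ is invertible in $k$. The only real hazard in writing this out is careful multi-degree bookkeeping — keeping the multi-filtration on $V$, the evenness constraint on $R$-coefficients, and the sign of the leading term of $X_I\cdot X_I$ all aligned — but once the filtration is in place the parity forcing is automatic, and (2) and (3) are essentially corollaries of (1) plus one sign calculation.
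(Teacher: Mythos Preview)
Your argument is correct and takes a genuinely different route from the paper's. The paper relies on Lemma~4.4, an explicit combinatorial expansion
\[
X_I X_J = c\,X_{I+J} + \sum_{\substack{\emptyset\neq K_1\subset I\\ \emptyset\neq K_2\subset J\\ K_1\to K_2}} c_{K_1,K_2}\, X_{(I\setminus K_1)+(J\setminus K_2)},
\]
proved by induction on $|I|$; it then analyzes term by term which summands can contribute a multiple of $X_J$ (for part (1)) or the monomial $\prod_{i\in I\cap J} x_i^2$ (for part (2)). Your approach replaces this with a $\mathbb{Z}_{\ge 0}^n$-filtration and the single observation that coefficients in $R=k[x_1^2,\ldots,x_n^2]$ have even multi-degree in every coordinate, so the inequality $\deg r + \deg X_L \le \deg X_I + \deg X_J$ combined with the parity constraint collapses each coordinate of $\deg r$ to $0$ or $2$, supported in $(I\cap J)\setminus L$. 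This is more structural and bypasses Lemma~4.4 entirely; it also immediately identifies the leading coefficient in part (3) via a sign count in the associated graded ring, whereas the paper reads it off from $\tr(x_{i_1}^2\cdots x_{i_s}^2)=2^n x_{i_1}^2\cdots x_{i_s}^2$ without pinning down the sign. The trade-off is that Lemma~4.4 records finer information (which subsets $K_1,K_2$ actually occur and the constraint $K_1\to K_2$), though that extra precision is not used in the proof of this lemma.
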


\begin{proof} (1) We compute the trace using the basis
\[
\{x_{j_1}x_{j_2}\cdots x_{j_u} \, | \, j_1<j_2<\cdots < j_u \}.
\]
Write $I = \{i_1<i_2<\cdots< i_s \}$ and $J = \{ j_1<j_2<\cdots< j_u
\}$.  We use Lemma~\ref{xxlem4.4} to compute:
\[
(x_{i_1}\cdots x_{i_s}) (x_{j_1}\cdots x_{j_u})=X_I X_J
=c X_{I+ J}+\sum_{\substack{\emptyset \neq K_1\subset I \\
\emptyset \neq K_2\subset J \\
K_1\to K_2}} c_{K_1,K_2} X_{(I \setminus K_1)+
(J\setminus K_2)}.
\]
If $X_{I+J}=r X_J$ for some $r\in R$, then $r$ is a scalar multiple
of $x_{k_1}^2\cdots x_{k_w}^2$ where $\{k_1,\dots, k_w\}=I\cap J$.
As a consequence $J=(I\setminus K)+(J\setminus K)$, which is
impossible as $|K|>0$. If $X_{(I \setminus K_1)+ (J\setminus K_2)}=r
X_J$ for some $r\in R$, then $r$ is a scalar multiple of
$x_{k_1}^2\cdots x_{k_w}^2$ where $K:=\{k_1,\dots,
k_w\}=(I\setminus K_1)\cap (J\setminus K_2)$. As a consequence
$J=(I\setminus K+K_1)+(J\setminus K+K_2)$. If $|K|>0$, then $K$ is
not in $(I\setminus K+K_1)+(J\setminus K+K_2)$, a contradiction.
Therefore, the only possible case is when $K$ is empty. When $K$ is
empty, the coefficient of $X_J$ is in $k$. The assertion follows.

(2) By Lemma \ref{xxlem4.4}, we need to compute
\[
\tr(X_I X_J)
=c \tr(X_{I+ J})+\sum_{\substack{\emptyset \neq K_1\subset I\\
\emptyset \neq K_2\subset J \\ K_1\to K_2}} c_{K_1,K_2} \tr(X_{(I
\setminus K_1)+ (J\setminus K_2)}).
\]
Let $I\cap J=K=\{k_1, \dots, k_n \}$. Clearly
\[
\tr(X_{I+ J})=x_{k_1}^2\cdots x_{k_n}^2 \tr(X_{(I\setminus
K)+(J\setminus K)})=x_{k_1}^2\cdots x_{k_n}^2 b
\]
for some $b=\tr(X_{(I\setminus
K)+(J\setminus K)})\in k$
by part (1).

Let $(I\setminus K_1)\cap (J\setminus K_2) = K' =
\{k'_1, \dots, k'_m \}$. Clearly
\[
\tr(X_{(I \setminus K_1)+ (J\setminus K_2)})=x_{k'_1}^2\cdots
x_{k'_m}^2 \tr(X_{(I\setminus (K'\cup K_1))+(J\setminus (K'\cup
K_2))})=x_{k'_1}^2\cdots x_{k'_m}^2 b'
\]
for some $b'\in k$ by part (1). Since $K'$ is a
subset of $K$, $\tr(X_{(I \setminus K_1)+ (J\setminus K_2)})$ is
either a scalar multiple of $\tr(X_{I+ J})$ or a scalar multiple of
some monomial in $\cwlt$. The assertion follows.

(3) For the most part, this is a special case of part (2). To
prove $c$ is invertible, we note $\tr(x_{i_1}^2\cdots
x_{i_s}^2)=2^nx_{i_1}^2\cdots x_{i_s}^2$ and that $2$ is invertible.
\end{proof}

\begin{remark}
\label{xxrem4.7} Let $V=W_n$, so the relations are $x_ix_j+x_jx_i=1$
for all $i\neq j$. Then we have an explicit formula for the trace of
each basis element $x_{i_1}\cdots x_{i_s}$, where $1 \leq i_{1} <
\dots < i_{s} \leq n$.
\begin{enumerate}
\item
If $s$ is odd, then $\tr(x_{i_1}\cdots x_{i_s})=0$, by Lemma
\ref{xxlem4.5}(3).
\item
If $\sigma\in S_n$ is a permutation of $[n]$, then, by
Lemmas \ref{xxlem1.8}(2) and \ref{xxlem4.6}(1), $\tr(x_{i_1}\cdots
x_{i_s})=\tr(x_{\sigma(i_1)}\cdots x_{\sigma(i_s)})$.
\item
If $s$ is even, then $\tr(x_{i_1}\cdots x_{i_s})=2^{n-s/2}$.
To see this, we use induction on $s$. Note that
$\tr(x_{i_1}x_{i_2}x_{i_3}\cdots
x_{i_s})=\tr(x_{i_2}x_{i_1}x_{i_3}\cdots x_{i_s})$ by part (2). Using
the relation, we have
\begin{align*}
\tr(x_{i_3}\cdots
x_{i_s})&=\tr((x_{i_1}x_{i_2}+x_{i_2}x_{i_1})x_{i_3}\cdots
x_{i_s})\\
&=\tr(x_{i_1}x_{i_2}x_{i_3}\cdots
x_{i_s})+\tr(x_{i_2}x_{i_1}x_{i_3}\cdots x_{i_s})\\
&=2\tr(x_{i_1}x_{i_2}x_{i_3}\cdots x_{i_s}).
\end{align*}
\end{enumerate}
\end{remark}

For any nonzero element $f$ in the (graded) polynomial ring
$k[x_1^2,x_2^2,\dots,x_n^2]$, let $\pr(f)$ denote the highest degree
component of $f$, which is called the \emph{principal term} of $f$ or
the \emph{leading term} of $f$.

Using the basis
\[
\left\{X_I=x_{i_1}\cdots x_{i_s}\mid I = \{i_{1} < \dots
< i_{s} \}\subset [n]\right\}
\]
to compute the discriminant, we need to compute the determinant of the
matrix
\[
M=(m_{IJ})_{2^n\times 2^n},
\]
where $m_{IJ}=\tr(x_{i_1}\cdots x_{i_w}x_{j_1}\cdots x_{j_s})$. By Lemma
\ref{xxlem4.6}, we have the following.
\begin{itemize}
\item
$m_{\emptyset,\emptyset}=2^n$,
\item
if $I=\{i_1,\dots, i_s\}$, then $\pr(m_{II})$ is of the form $c
x_{i_1}^2\cdots x_{i_s}^2$ where $c\in k^\times$, and other terms of
$m_{II}$ are cwlt $x_{i_1}^2\cdots x_{i_s}^2$,
\item
for every pair $I\neq J$, $m_{IJ}$ is cwlt both $\pr(m_{II})$ and
$\pr(m_{JJ})$.
\end{itemize}

Therefore we have the following.

\begin{proposition}
\label{xxpro4.8} Retain the notation above.
\begin{enumerate}
\item
The product $\prod_{I\subset [n]} m_{II}$ has principal term of
the form $c(\prod_{i=1}^n x_i^2)^{2^{n-1}}$ for some $c\in
k^\times$.
\item
Thus $\prod_{I\subset [n]} m_{II}=c(\prod_{i=1}^n
x_i^2)^{2^{n-1}}+ \cwlt$.
\item
For each non-identity permutation $\tau$ of $2^{[n]}$, each
monomial in the product $\prod_{I\subset [n]}m_{I \tau(I)}$ is
cwlt $(\prod_{i=1}^n x_i^2)^{2^{n-1}}$.
\end{enumerate}
\end{proposition}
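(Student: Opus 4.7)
The approach is to read off parts (1) and (2) from the three bulleted facts immediately preceding the proposition, and to handle part (3) via a counting argument on the Boolean lattice $2^{[n]}$.

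For part (1) I would simply multiply leading terms. By the second bullet, $\pr(m_{II})=c_I\prod_{i\in I}x_i^2$ with $c_I\in k^\times$, so
\[
\prod_{I\subset[n]}\pr(m_{II})=\Bigl(\prod_I c_I\Bigr)\prod_{i=1}^n x_i^{2a_i},\qquad a_i=\bigl|\{I\subset[n]:i\in I\}\bigr|=2^{n-1}.
\]
This yields $c(\prod_i x_i^2)^{2^{n-1}}$ with $c=\prod_I c_I\in k^\times$, proving (1). Part (2) is then immediate: writing $m_{II}=\pr(m_{II})+g_I$ with each monomial of $g_I$ cwlt $\pr(m_{II})$, and expanding $\prod_I(\pr(m_{II})+g_I)$, every term other than $\prod_I\pr(m_{II})$ contains at least one factor $g_I$ and so contributes only monomials cwlt the leading term computed in (1).

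The substantive step is (3). Fix a non-identity permutation $\tau$ of $2^{[n]}$. If some factor $m_{I\tau(I)}$ vanishes, the entire product is $0$ and there is nothing to prove, so I may assume every factor is nonzero. By the third bullet (Lemma~\ref{xxlem4.6}(2)), every monomial of $m_{I\tau(I)}$ has degree at most $2$ in $x_j$ when $j\in I\cap\tau(I)$ and degree $0$ otherwise. Hence every monomial of $\prod_I m_{I\tau(I)}$ has degree in $x_j$ at most $2b_j$, where
\[
b_j=\bigl|\{I\subset[n]:j\in I\text{ and }j\in\tau(I)\}\bigr|=\bigl|S_j\cap\tau^{-1}(S_j)\bigr|,\qquad S_j:=\{I\subset[n]:j\in I\}.
\]
Since $|S_j|=|\tau^{-1}(S_j)|=2^{n-1}$, I get $b_j\leq 2^{n-1}$ for every $j$.

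The main obstacle is proving the strict inequality $b_{j_0}<2^{n-1}$ for at least one index $j_0$. Equality $b_j=2^{n-1}$ forces $\tau^{-1}(S_j)=S_j$, i.e.\ $\tau$ preserves the family of subsets of $[n]$ containing $j$. If this held for every $j$, then for each $I$ we would have $j\in I\iff j\in\tau(I)$, which forces $\tau(I)=I$ for all $I$ and contradicts $\tau\neq\mathrm{id}$. Therefore some index $j_0$ satisfies $b_{j_0}<2^{n-1}$, and every monomial of $\prod_I m_{I\tau(I)}$ is cwlt $(\prod_i x_i^2)^{2^{n-1}}$, completing (3).
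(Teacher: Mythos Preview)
Your proof is correct. The paper does not supply an explicit argument for this proposition---it simply writes ``Therefore we have the following'' after listing the three bulleted facts about the entries $m_{IJ}$---so your write-up fills in the details the authors leave to the reader. Parts (1) and (2) are handled exactly as intended, and your counting argument for (3), bounding the $x_j$-degree of each factor by $2\cdot\mathbf{1}[j\in I\cap\tau(I)]$ and then showing $\tau^{-1}(S_j)=S_j$ for all $j$ would force $\tau=\mathrm{id}$, is a clean way to extract the required strict inequality; it is essentially equivalent to the more direct observation that for any $I_0$ with $\tau(I_0)\neq I_0$ the factor $m_{I_0\tau(I_0)}$ is already cwlt $\pr(m_{I_0I_0})$ by the third bullet, which forces the full product to be cwlt the diagonal product.
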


Recall that $2$ is invertible in the commutative domain $k$.

\begin{theorem}\label{xxthm4.9}
Let $B=V_n(\mathcal{A})$ and
$R=k[x_1^2,\dots,x_n^2]\subset B$.
\begin{enumerate}
\item
The discriminant satisfies $d(B/R)=c(\prod_{i=1}^n
x_i^2)^{2^{n-1}}+ \cwlt$ where $c\in k^\times$. As a consequence,
$d(B/R)$ is a dominating element of $B$.
\item
If $g\in \Aut(B)$ is an automorphism so that $g$ and $g^{-1}$
preserve $R$, then $g$ is affine.
\item
If $n$ is even, then $V_n(\mathcal{A})$ is in $\Af$.
\end{enumerate}
\end{theorem}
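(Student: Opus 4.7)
The plan is to chain the pieces that Section 4 has set up. For part (1), I would expand the discriminant as the determinant of the matrix $M=(m_{IJ})$ indexed by subsets of $[n]$, using the basis $\{X_I=x_{i_1}\cdots x_{i_s}\}$ of $B$ over $R$. The Leibniz formula gives
\[
d(B/R)\;=_{R^{\times}}\;\sum_{\tau\in S_{2^{[n]}}} \mathrm{sgn}(\tau)\prod_{I\subset[n]} m_{I,\tau(I)}.
\]
Proposition \ref{xxpro4.8}(2) tells us that the identity permutation $\tau=\mathrm{id}$ contributes $c(\prod_{i=1}^n x_i^2)^{2^{n-1}}+\cwlt$ with $c\in k^\times$, while Proposition \ref{xxpro4.8}(3) says that for every non-identity $\tau$ the product $\prod_I m_{I,\tau(I)}$ is a sum of monomials each cwlt $(\prod_{i=1}^n x_i^2)^{2^{n-1}}$. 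So no cancellation can occur at the leading term, and $d(B/R)=c(\prod_{i=1}^n x_i^2)^{2^{n-1}}+\cwlt$. Then Lemma \ref{xxlem2.2}(1) applies directly (with $b_1=\cdots=b_n=2^{n-1}>0$), giving that $d(B/R)$ is dominating.

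For part (2), fix $g\in\Aut(B)$ with $g,g^{-1}$ preserving $R$. Lemma \ref{xxlem1.8}(6) yields $g(d(B/R))=\lambda\, d(B/R)$ for some $\lambda\in R^\times$. Now $\gr B\cong k_{-1}[x_1,\dots,x_n]$ is a connected graded domain (Lemma \ref{xxlem4.1}(2)), so $B^\times=k^\times$ and hence $R^\times=k^\times$. Thus $\lambda\in k^\times$. Since $d(B/R)$ is locally dominating by part (1), Lemma \ref{xxlem2.6} forces $g$ to be affine.

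For part (3), assume $n$ is even. I need to verify the three defining conditions of $\Af$ from Definition \ref{xxdef2.4}. Condition (1) is Lemma \ref{xxlem4.1}(2): the standard filtration on $V_n(\mathcal{A})$ has associated graded ring $k_{-1}[x_1,\dots,x_n]$, a connected graded domain. Condition (2) is Lemma \ref{xxlem4.1}(5): when $n$ is even, the center is exactly $R=k[x_1^2,\dots,x_n^2]$, and $B$ is a free $R$-module of rank $2^n$. Condition (3) is part (1).

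The only nontrivial step is the determinant expansion in part (1); in principle one could worry that the non-identity permutations contribute terms that combine with the identity term to yield a different leading monomial, but Proposition \ref{xxpro4.8}(3) rules this out cleanly by the componentwise comparison. Everything else is assembly of already-established lemmas, so I do not anticipate any real obstacle beyond carefully stating the cwlt bookkeeping.
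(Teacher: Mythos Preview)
Your proposal is correct and follows essentially the same route as the paper: expand the determinant via the Leibniz formula, isolate the diagonal contribution using Proposition~\ref{xxpro4.8}(2), bound the off-diagonal contributions by Proposition~\ref{xxpro4.8}(3), then invoke Lemma~\ref{xxlem2.2}(1) for the dominating property, Lemma~\ref{xxlem1.8}(6) and Lemma~\ref{xxlem2.6} for part (2), and Lemma~\ref{xxlem4.1}(2,5) together with part (1) for part (3). Your explicit remark that $R^\times=k^\times$ (via $\gr B$ being a connected graded domain) is a detail the paper leaves implicit, so your write-up is slightly more complete.
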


\begin{proof} (1) By definition, $d(B/R)$ is the determinant of $M$,
which is equal to
\[
\sum_{\tau \in S_{2^n}} (-1)^{|\tau|} \prod_{I\subset [n]}
m_{I\tau(I)}.
\]
In every summand, by Proposition
\ref{xxpro3.7}(2,3), $\prod_{I\subset [n]} m_{II}$ has the highest
possible degree and it is equal to $c(\prod_{i=1}^n
x_i^2)^{2^{n-1}}+ \cwlt$ for some $c\in k^\times$. Any other term
$\prod_{I\subset [n]} m_{I\tau(I)}$, for a non-identity
permutation $\tau$, is a
linear combination of monomials that are cwlt $(\prod_{i=1}^n
x_i^2)^{2^{n-1}}$ by Proposition \ref{xxpro3.7}(3). Therefore
\[
\sum_{\tau \in S_{2^n}} (-1)^{|\tau|} \prod_{I\subset [n]}
m_{I\tau(I)}=c(\prod_{i=1}^n x_i^2)^{2^{n-1}}+ \cwlt
\]
and the
assertion follows.

(2) Assume that $g$ is an automorphism such that $g$ and $g^{-1}$
preserve $R$. By Lemma \ref{xxlem1.8}(f),
$g(d)=c d$ for some $c\in k^\times$. By part (1), $d(B/R)$ is
dominating. By Lemma \ref{xxlem2.6}, $g$ is affine.

(3) This follows from Lemma \ref{xxlem4.1}(5) and part (1).
\end{proof}

When $n$ is odd, part (3) no longer holds. See Example \ref{xxex5.12}
and Remark \ref{xxrem5.14} for more about what happens when $n$ is odd
or when $\ch k = 2$.

Now we are ready to prove Theorem \ref{xxthm0.1}, as well as the
following.

\begin{theorem}\label{xxthm4.10}
Assume that $n$ is a positive even integer. Then $k_{-1}[x_1,\dots,x_n]$
is in $\Af$ and the following hold.
\begin{enumerate}
\item
$\Aut(k_{-1}[x_1,\dots,x_n])= S_n \ltimes (k^\times)^n$.
\item $\Aut(k_{-1}[x_1,\dots,x_n][t])=\begin{pmatrix}
S_n\ltimes (k^\times)^n & k[x_1^{2}, \cdots,x_{n}^2]\\
0& k^\times
\end{pmatrix}$.
\item
If ${\mathbb Q}\subseteq k$, then every locally nilpotent derivation
of $k_{-1}[x_1,\dots,x_n]$ is zero.
\end{enumerate}
\end{theorem}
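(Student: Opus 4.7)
The plan is to reduce Theorem \ref{xxthm4.10} to the machinery already developed: verify that $A:=k_{-1}[x_1,\dots,x_n]$ lies in the category $\Af$, and then invoke Theorem \ref{xxthm0.3} (i.e.\ Theorems \ref{xxthm2.7} and \ref{xxthm3.5}) to obtain the structural conclusions (2) and (3). Part (1) then requires one extra explicit step: pinning down $\Aut_{\af}(A)$.

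To put $A$ in $\Af$, I check the three conditions of Definition \ref{xxdef2.4}. For (1), observe that $A$ is already connected graded (with $\deg x_i=1$) and is a domain, so under its standard filtration $\gr A\cong A$ is a connected graded domain. For (2), apply Lemma \ref{xxlem4.1}(5) with $\mathcal A=\{0\}$: when $n$ is even, the center of $A$ is $R=k[x_1^2,\dots,x_n^2]$ and $A$ is free over $R$ of rank $2^n$. For (3), apply Theorem \ref{xxthm4.9}(1) (again with $\mathcal A=\{0\}$) to conclude that $d(A/R)=c\bigl(\prod_{i=1}^n x_i^2\bigr)^{2^{n-1}}+\cwlt$ with $c\in k^{\times}$, which is a dominating element by Lemma \ref{xxlem2.2}(1). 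Hence $A\in \Af$, which is the first assertion of the theorem.

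With $A\in\Af$ in hand, part (2) of the theorem follows immediately from Theorem \ref{xxthm3.5}(1), once we know $\Aut(A)=S_n\ltimes (k^\times)^n$ and $R=k[x_1^2,\dots,x_n^2]$; part (3) follows from Theorem \ref{xxthm3.5}(3), since the hypothesis $\mathbb Q\subseteq k$ forces $\ch k=0$. For part (1), I would argue in two stages: by Theorem \ref{xxthm2.7} every $g\in\Aut(A)$ is affine, so $\Aut(A)=\Aut_{\af}(A)$. To identify $\Aut_{\af}(A)$, apply Lemma \ref{xxlem4.3} (valid for any $V_n(\mathcal A)$) to conclude that every affine automorphism has the form $g(x_i)=r_ix_{\sigma(i)}$ for some $\sigma\in S_n$ and $r_i\in k^{\times}$. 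Conversely, for $\mathcal{A}=\{0\}$ every such assignment automatically preserves the defining relations $x_ix_j+x_jx_i=0$, so every such map extends to an automorphism; combining gives $\Aut(A)=S_n\ltimes(k^\times)^n$ as desired.

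I do not anticipate a serious obstacle: essentially all the heavy lifting -- computing the principal term of the discriminant of $V_n(\mathcal A)$, and proving Theorem \ref{xxthm0.3} -- has already been done in Sections \ref{xxsec2}--\ref{xxsec4}. The only point demanding a little care is the converse direction in identifying $\Aut_{\af}(A)$: one must observe that, unlike in $W_n$ where the constraint $r_ir_j=1$ forces $r_i\in\{\pm 1\}$, the relations for $k_{-1}[x_1,\dots,x_n]$ are homogeneous, so no constraint on the $r_i$ arises and the full torus $(k^{\times})^n$ appears, yielding the semidirect product $S_n\ltimes(k^\times)^n$ rather than the smaller group $S_n\times\{\pm 1\}$ that appears in Theorem \ref{xxthm0.1}.
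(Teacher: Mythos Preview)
Your proposal is correct and follows essentially the same route as the paper: show $k_{-1}[x_1,\dots,x_n]\in\Af$ via Theorem~\ref{xxthm4.9} (with $\mathcal A=\{0\}$), invoke Theorem~\ref{xxthm0.3} for affineness and for parts (2) and (3), and then pin down $\Aut_{\af}$ using Lemma~\ref{xxlem4.3}. The only addition you make is spelling out the converse direction---that every choice of $\sigma\in S_n$ and $r_i\in k^\times$ really does define an automorphism because the relations are homogeneous---which the paper leaves implicit (or absorbs into the citation of Lemma~\ref{xxlem4.2}(1)).
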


\begin{proof}[Proof of Theorems \ref{xxthm0.1} and \ref{xxthm4.10}]
Let $B=W_n$ and $R=k[x_1^2,\dots,x_n^2]$. Note that $W_n$ is a special
case of $V_n(\mathcal{A})$. By Theorem \ref{xxthm4.9}(3), $W_n$
is in $\Af$.  Theorem \ref{xxthm0.1} follows from Theorem \ref{xxthm0.3}
and Lemma \ref{xxlem4.3}.

Now consider $B=k_{-1}[x_1,\dots,x_n]$. The first part of the proof is the
same as for $B=W_n$. By Theorem \ref{xxthm4.9}(3), $B$ is in $\Af$ and
every $g\in \Aut(B)$ is affine by Theorem \ref{xxthm0.3}. By
Lemma \ref{xxlem4.3}, there is a $\sigma\in S_n$ such that $g(x_i)=r_i
x_{\sigma(i)}$, where $r_i\in k^\times$. Thus part (1)
follows. Parts (2,3)  follow from Theorem \ref{xxthm0.3}.
\end{proof}

We also have the following results, which follow immediately from
Theorems \ref{xxthm0.1} and \ref{xxthm0.3} and Proposition
\ref{xxpro3.7}.

\begin{theorem}
 \label{xxthm4.11}
 Let $n$ be a positive even integer and $m$ a positive integer.
 \begin{enumerate}
  \item $\Aut(W_2[t])=\begin{pmatrix}
                  S_2\ltimes k^\times & k[x_1^2,x_2^2]\\
                  0& k^\times
                 \end{pmatrix}$.
  \item $\Aut(W_2 [\underline{t}_m^{\pm 1}])
=\left(S_2\ltimes (k[\underline{t}_m])^\times\right)
  \times \left((k^\times)^m \rtimes \{\pm 1\}\right)$.
  \item If $n\geq 4$,
$\Aut(W_n[t])=\begin{pmatrix}
                  S_n\times \{\pm 1\} & k[x_1^2,\dots,x_n^2]\\
                  0& k^\times
                 \end{pmatrix}$.
  \item
  If $n\geq 4$,
$\Aut(W_n [\underline{t}_m^{\pm 1}])=\left(S_n\times \{\pm 1\}\right)
  \times \left((k^\times)^m \rtimes GL_m({\mathbb Z})\right)$.
  \item
  If ${\mathbb Q}\subseteq k$, then every locally nilpotent derivation
of $W_n$ is zero.
 \end{enumerate}
\end{theorem}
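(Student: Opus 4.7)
The plan is to deduce each part of Theorem \ref{xxthm4.11} by directly invoking Theorems \ref{xxthm0.1}, \ref{xxthm0.3}, and Proposition \ref{xxpro3.7}, after first recording the relevant structural facts: by Theorem \ref{xxthm4.9}(3), $W_n$ lies in $\Af$ for every positive even $n$; by Lemma \ref{xxlem4.1}(5), its center is $R = k[x_1^2, \ldots, x_n^2]$; the automorphism group $\Aut(W_2) = S_2 \ltimes k^\times$ was cited in the introduction from \cite{AlD}, while $\Aut(W_n) = S_n \times \{\pm 1\}$ for $n \geq 4$ is Theorem \ref{xxthm0.1}; finally, $\gr W_n$ is a connected graded domain, so $W_n^\times = k^\times$.

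With these inputs in hand, parts (1) and (3) follow immediately from Theorem \ref{xxthm0.3}(2), which identifies $\Aut(W_n[t])$ with the triangular matrix group whose upper-left block is $\Aut(W_n)$, whose upper-right block is $R$, and whose lower-right block is $k^\times$. Part (5) is Theorem \ref{xxthm0.3}(3) applied to $W_n \in \Af$ with $\mathbb{Q} \subseteq k$.

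For parts (2) and (4), I would apply Proposition \ref{xxpro3.7} with $A = W_n$: since $A^\times = k^\times$, there is a direct product decomposition
\[
\Aut(W_n[\underline{t}_m^{\pm 1}]) = \Aut_{k[\underline{t}_m^{\pm 1}]}(W_n[\underline{t}_m^{\pm 1}]) \times \Aut(k[\underline{t}_m^{\pm 1}]),
\]
and the second factor equals $(k^\times)^m \rtimes GL_m(\mathbb{Z})$ by Lemma \ref{xxlem3.6}(3). To pin down the first factor, I would use Lemma \ref{xxlem3.1} to transfer the dominating discriminant $d(W_n/R)$ of Theorem \ref{xxthm4.9}(1) up to $W_n \otimes k[\underline{t}_m^{\pm 1}]$ over its center $R \otimes k[\underline{t}_m^{\pm 1}]$. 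Combined with the remark following Proposition \ref{xxpro3.7}, this forces any $k[\underline{t}_m^{\pm 1}]$-linear automorphism to be affine, and then the computation of Lemma \ref{xxlem4.3}, run over the base $k[\underline{t}_m^{\pm 1}]$ in place of $k$, yields $x_i \mapsto r_i x_{\sigma(i)}$ with $\sigma \in S_n$ and $r_i r_j = 1$ for all $i \neq j$. For $n \geq 4$ this system forces all $r_i$ to be equal elements of $\{\pm 1\}$, producing the first factor $S_n \times \{\pm 1\}$; for $n = 2$ only the single equation $r_1 r_2 = 1$ survives, leaving a free unit parameter, which accounts for the larger semidirect product appearing in (2).

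The main obstacle I anticipate is the subtle point that Lemma \ref{xxlem3.2}(1) requires $\gr A \otimes \gr C$ to be connected graded, which literally fails for $C = k[\underline{t}_m^{\pm 1}]$. One must verify that the affine conclusion still holds for $k[\underline{t}_m^{\pm 1}]$-linear automorphisms, either by first treating the non-inverted polynomial extension $k[\underline{t}_m]$ and then inverting, or by directly re-running the dominating-discriminant argument of Section \ref{xxsec2} over the commutative domain $k[\underline{t}_m^{\pm 1}]$. Once this technical adaptation is in place, the rest is the routine linear-algebra bookkeeping sketched above.
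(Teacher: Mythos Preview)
Your proposal is correct and follows exactly the route the paper indicates: the paper's own proof is simply the one-line remark that the theorem ``follow[s] immediately from Theorems \ref{xxthm0.1} and \ref{xxthm0.3} and Proposition \ref{xxpro3.7},'' and you have unpacked precisely those citations. The technical concern you flag about Lemma \ref{xxlem3.2}(1) is legitimate and is resolved just as you suggest---the paper's remark after Proposition \ref{xxpro3.7} invokes Lemma \ref{xxlem3.2}(1), but the honest justification is your base-change observation: since the whole of Section \ref{xxsec4} is written over an arbitrary commutative domain $k$ with $2$ invertible, one may replace $k$ by the domain $k[\underline{t}_m^{\pm 1}]$ and rerun Theorem \ref{xxthm4.9} and Lemma \ref{xxlem4.3} verbatim to obtain the affine conclusion for $k[\underline{t}_m^{\pm 1}]$-linear automorphisms.
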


Further results can be found in \cite{CPWZ2}.

\begin{question}\label{xxque4.12}
In the above we don't need the exact computation of the
discriminant $d(W_n/R)$, but it would be nice to have.
Let $x_{123\cdots n} = \Omega(\{x_1,\dots,x_n\})$ be defined
as in \eqref{eqn-omega}, let
\[
M= \begin{pmatrix} 2x_1^2& 1&\cdots &1\\
1& 2x_2^2&\cdots&1\\
\vdots &\vdots & \cdots & \vdots\\
1&1& \cdots &2x_n^2\end{pmatrix},
\]
and let $D = \det M$. We have the following questions (or conjectures).
\begin{enumerate}
\item
Is $x_{123\cdots n}^2=_{k^\times} D$?
\item
Is $d(W_n/k[x_1^2,\dots,x_n^2])=_{k^{\times}} D^{2^{n-1}}$?
\end{enumerate}
Both formulas have been verified by computer for even integers $n \leq 6$ (see
also Example~\ref{xxex1.7}(1) for $n=2$). It also appears that if we use
the basis
\[
\{\Omega(x_{i_1},\dots,x_{i_s}) \mid i_{1} < \dots < i_{s} \},
\]
ordered by $s$, to compute the discriminant, then the corresponding
matrix of traces is block diagonal, and the $j$th block is the matrix
of $j \times j$ minors of $M$. Verifying this last statement would
give the above computation of the discriminant, by the
Sylvester-Franke theorem (see \cite{To}, for example).
\end{question}

\section{Comments and examples}
\label{xxsec5}

In this section we provide some comments, remarks, examples and questions
related to automorphisms. To save space, some details are omitted.
By Theorem \ref{xxthm0.3}, if $A$ is in the category $\Af$,
then we can compute its automorphism group. In this section
we would like to show that there are many algebras in $\Af$.

First of all, a dominating discriminant may be in a
form different from the one given in Lemma \ref{xxlem2.2}(1).

\begin{example}
 \label{xxex5.1}
Consider the algebra $S(p):=k\langle x,y \rangle/
(y^2 x-p x y^2,y x^2+p  x^2 y)$ where $p\in k^\times$.
Suppose $k$ is a field.
By \cite[(8.11)]{AS}, $S(p)$
is a noetherian Artin-Schelter regular domain of global
dimension 3, which is of type $S_2$ in the classification
given in \cite{AS}.  Setting $\deg x=\deg y=1$,
$S(p)$ is graded
and its Hilbert series is
\[
H_{S(p)}(t)=\frac{1}{(1-t)^2(1-t^2)}.
\]
It is known that $\GKdim S(p)=\Kdim S(p)=3$.
We are interested in the case when $p=1$, so we set $A=S(1)$.
One can check that the center of $A$ is the commutative
polynomial subring  $R:=k[x^4, y^2,\Omega]$ where
$\Omega= (xy)^2+(yx)^2$.
As an $R$-module, $A$ is free of rank 16. A computation (omitted)
shows that
\[
d(A/R)=_{k^\times} (x^4)^8
(\Omega^2+4x^4 y^4)^8.
\]
We claim that this element is dominating.

Note that, in the algebra $A$, $d(A/R)$ has different presentations
\[
(x^4)^8  (\Omega^2+4x^4 y^4)^8
=(x^4)^8 (xy+i yx)^{32}=(x^4)^8 (xy-i yx)^{32}
\]
where $i^2=-1$. Let $B$ be any ${\mathbb N}$-filtered algebra
such that $\gr B$ is a domain. Let $y_1,y_2$ be any elements
in $B$ of degree at least 1. If $\deg y_1>1$ or $\deg y_2>1$,
then either $\deg(y_1 y_2-iy_2y_1)>2$ or
$\deg(y_1 y_2+iy_2y_1)>2$. Assume the former by symmetry.
Then $\deg (y_1^4)^8 (y_1y_2-i y_2y_1)^{32}> \deg d(A/R)$.
Therefore $d(A/R)$ is dominating. Consequently, $A$ is in $\Af$
and Theorem \ref{xxthm0.3} applies. One can then easily check that
$\Aut(A)=(k^\times)^2$.
\end{example}

Next we show that $\Af$ is closed under tensor products.
We start with a few easy lemmas.

\begin{lemma}
\label{xxlem5.2}
Let $A$ and $B$ be algebras such that their centers $C(A)$ and $C(B)$
are $k$-flat. Then $C(A\otimes B)=C(A)\otimes C(B)$.
\end{lemma}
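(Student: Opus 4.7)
The plan is to prove $C(A\otimes_k B)=C(A)\otimes_k C(B)$ by establishing the two inclusions separately. The inclusion $C(A)\otimes_k C(B)\subseteq C(A\otimes_k B)$ is formal once we know that the canonical map $C(A)\otimes_k C(B)\to A\otimes_k B$ is injective, since an elementary tensor $c\otimes d$ with $c\in C(A)$ and $d\in C(B)$ manifestly commutes with every $a\otimes b$ in $A\otimes_k B$. Injectivity is where the $k$-flatness hypotheses enter: tensoring the inclusion $C(A)\hookrightarrow A$ with the flat $k$-module $C(B)$ produces an embedding $C(A)\otimes_k C(B)\hookrightarrow A\otimes_k C(B)$, which one pushes further into $A\otimes_k B$ by a symmetric flatness argument.

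For the reverse inclusion, the central tool is the decomposition of inner derivations on the tensor product. For fixed $a\in A$ and $b\in B$, the commutator $[a\otimes 1,\,-\,]$ on $A\otimes_k B$ coincides with $\mathrm{ad}_a\otimes\mathrm{id}_B$, and $[1\otimes b,\,-\,]$ coincides with $\mathrm{id}_A\otimes\mathrm{ad}_b$, where $\mathrm{ad}_x(y):=xy-yx$. Thus $z\in C(A\otimes_k B)$ if and only if $(\mathrm{ad}_a\otimes\mathrm{id})(z)=0$ for every $a\in A$ and $(\mathrm{id}\otimes\mathrm{ad}_b)(z)=0$ for every $b\in B$. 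The goal is to turn the first family of equations into the membership $z\in C(A)\otimes_k B$ and the second into $z\in A\otimes_k C(B)$; combined with the injectivity recorded in the first paragraph, these imply $z\in C(A)\otimes_k C(B)$.

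The cleanest way I see to carry out the conversion is to pass to the fraction field $K$ of $k$. Over $K$, one picks a $K$-basis $\{b_\alpha\}$ of $B_K:=B\otimes_k K$, writes $z=\sum_\alpha a_\alpha\otimes b_\alpha$ uniquely, and uses $\sum_\alpha[a,a_\alpha]\otimes b_\alpha=0$ together with $K$-linear independence of $\{b_\alpha\}$ to force $a_\alpha\in C(A_K)$ for every $a$; the symmetric argument on the $B$-side places $z$ in $C(A_K)\otimes_K C(B_K)$. The main obstacle is then the descent step: verifying that $k$-flatness of $C(A)$ and $C(B)$ makes $C(A)\otimes_k K=C(A_K)$ (which amounts to showing that forming the kernel of the full family of commutator maps $\mathrm{ad}_a\colon A\to A$ commutes with the flat base change $k\to K$), and that $C(A)\otimes_k C(B)$ is the preimage of $C(A_K)\otimes_K C(B_K)$ under the injection $A\otimes_k B\hookrightarrow A_K\otimes_K B_K$. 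Once this flatness bookkeeping is in place, the two inclusions combine to yield the equality.
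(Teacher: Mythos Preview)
The paper does not supply a proof of this lemma; it is simply listed among ``a few easy lemmas'' preceding the tensor-product results, so there is nothing to compare your argument against.

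That said, the descent step you flag as the ``main obstacle'' is a genuine gap, and flatness of $C(A)$ and $C(B)$ alone does not close it. Two concrete places where the stated hypotheses fall short:
\begin{itemize}
\item The equality $C(A_K)=C(A)\otimes_k K$ requires $A$ to be $k$-torsion-free. From $x/s\in C(A_K)$ you only deduce that for each $a\in A$ there is some nonzero $u_a\in k$ with $u_a[a,x]=0$; without torsion-freeness of $A$ you cannot conclude $[a,x]=0$. Flatness of $C(A)$ says nothing about torsion in $A$ outside the center.
\item The map $A\otimes_k B\to A_K\otimes_K B_K$ is the localization map on $A\otimes_k B$, and it is injective only when $A\otimes_k B$ is $k$-torsion-free --- again not forced by the hypotheses.
\end{itemize}
Even the forward inclusion has the same defect: your chain $C(A)\otimes_k C(B)\hookrightarrow A\otimes_k C(B)\hookrightarrow A\otimes_k B$ uses flatness of $C(B)$ for the first arrow, but the second arrow comes from tensoring the inclusion $C(B)\hookrightarrow B$ with $A$, and that needs flatness of $A$, not of $C(A)$.

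In the paper's actual applications (Lemma~\ref{xxlem5.3} and Theorem~\ref{xxthm5.5}) the algebras $A$ and $B$ are free over their flat centers, hence themselves $k$-flat and torsion-free; with that extra input your fraction-field argument --- or a direct flatness argument that avoids $K$ altogether --- goes through without difficulty. The cleanest repair is therefore to assume in addition that $A$ and $B$ are $k$-flat (as is the case for every algebra in $\Af$), or at least to note that this holds wherever the lemma is invoked.
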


\begin{lemma}
\label{xxlem5.3} Suppose that $A$ is a free module over $C(A)$ of rank
$m$, and $B$ is a free module over $C(B)$ of rank $n$. Assume that
both $C(A)$ and $C(B)$ are flat over $k$. Then $A\otimes B$ is a free
module over $C(A\otimes B)$ of rank $mn$ and
\[
d(A\otimes B/C(A\otimes B))=d(A/C(A))^n d(B/C(B))^m.
\]
\end{lemma}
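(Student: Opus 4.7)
The plan is to reduce the discriminant of a tensor product to a Kronecker product of the individual discriminant matrices, and then apply the classical determinant identity $\det(M_A \otimes M_B) = (\det M_A)^n (\det M_B)^m$ for $M_A$ of size $m$ and $M_B$ of size $n$.

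First I would pick a $C(A)$-basis $\{a_1,\dots,a_m\}$ of $A$ and a $C(B)$-basis $\{b_1,\dots,b_n\}$ of $B$. By Lemma~\ref{xxlem5.2}, $C(A\otimes B)=C(A)\otimes C(B)$, and flatness of the centers lets me conclude that $\{a_i\otimes b_j\}_{i,j}$ is a free $C(A\otimes B)$-basis of $A\otimes B$ of rank $mn$. This takes care of the freeness claim and supplies the basis with which to compute the discriminant.

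The key step is to identify the regular trace on $A\otimes B$ as the tensor product of the regular traces on $A$ and $B$, that is,
\[
\tr_{A\otimes B}(x\otimes y)=\tr_A(x)\,\tr_B(y)
\]
for $x\in A$, $y\in B$. This follows because left multiplication by $x\otimes y$ on the free $C(A\otimes B)$-module $A\otimes B$, expressed in the basis $\{a_i\otimes b_j\}$, is the Kronecker product of left multiplication by $x$ (on $A$, in the basis $\{a_i\}$) with left multiplication by $y$ (on $B$, in the basis $\{b_j\}$); and the trace of a Kronecker product of square matrices is the product of their traces. Here I will need to invoke flatness of $C(A)$ and $C(B)$ to make sense of the identification $\End_{C(A)}(A)\otimes \End_{C(B)}(B)\hookrightarrow \End_{C(A\otimes B)}(A\otimes B)$ on the relevant elements.

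Applying this to the pairs $((a_i\otimes b_j),(a_k\otimes b_l))$, the discriminant matrix becomes
\[
\bigl(\tr((a_i\otimes b_j)(a_k\otimes b_l))\bigr)=\bigl(\tr(a_ia_k)\tr(b_jb_l)\bigr)=M_A\otimes M_B,
\]
where $M_A=(\tr(a_ia_k))$ and $M_B=(\tr(b_jb_l))$. The standard Kronecker-determinant identity then yields
\[
d(A\otimes B/C(A\otimes B))=_{C(A\otimes B)^\times}\det(M_A\otimes M_B)=(\det M_A)^n(\det M_B)^m=d(A/C(A))^n\,d(B/C(B))^m,
\]
as desired. The main obstacle I expect is the bookkeeping in the second step: confirming that the regular-trace construction of Example~\ref{xxex1.2}(3) really is compatible with tensor products over the respective centers, since $A$ and $B$ need not be commutative and one must track carefully the left-multiplication maps under the isomorphism $\End_{C(A)}(A)\otimes_k\End_{C(B)}(B)\cong \End_{C(A)\otimes C(B)}(A\otimes B)$ that uses the flatness hypothesis.
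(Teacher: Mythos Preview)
Your proposal is correct and follows essentially the same route as the paper's proof: both pick tensor-product bases, verify $\tr(a\otimes b)=\tr(a)\tr(b)$ by examining left multiplication in coordinates, and finish with the Kronecker determinant identity $\det(M_A\otimes M_B)=(\det M_A)^n(\det M_B)^m$. The paper sidesteps your anticipated bookkeeping obstacle by simply writing $ax_i=\sum_{i'} r_{ii'}x_{i'}$, $by_j=\sum_{j'} s_{jj'}y_{j'}$ and reading off $(a\otimes b)(x_i\otimes y_j)=\sum_{i',j'} r_{ii'}s_{jj'}\,x_{i'}\otimes y_{j'}$ directly, so no abstract identification of endomorphism rings is required.
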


\begin{proof} Pick a basis $\{x_i\}$ of $A$ over $C(A)$ and basis
$\{y_j\}$ of $B$ over $C(B)$. For any $a\in A, b\in B$, write $ax_i=\sum_{i'}
r_{ii'} x_{i'}$ and $by_j=\sum_{j'} s_{jj'} y_{j'}$. Then
$\tr(a)=\sum_i r_{ii}$ and $\tr(b)=\sum_j s_{jj}$. Using $\{x_i\otimes y_j\}$
as a basis of $A\otimes B$ over $C(A)\otimes C(B)$, we have
\[
(a\otimes b)(x_i\otimes y_j)=\sum_{i'}\sum_{j'} r_{ii'} s_{jj'}
x_{i'}\otimes y_{j'}
\]
which implies that $\tr(a\otimes b)=\sum_i \sum_j r_{ii}s_{jj}=\tr(a)\tr(b)$.
Now
\begin{align*}
d(A\otimes B/C_A\otimes C_B)&=\det (\tr((x_i\otimes y_j)
(x_{i'}\otimes y_{j'})))
=\det (\tr(x_i x_{i'})\tr(y_j y_{j'}))\\
&=\det (\tr(x_i x_{i'}))^n
\det(\tr(y_jy_{j'}))^m=d(A/C(A))^n d(B/C(B))^m.
\end{align*}
\end{proof}

\begin{lemma}
\label{xxlem5.4} Retain the hypotheses of Lemma \ref{xxlem5.3}.
Suppose that $d(A/C(A))$ and $d(B/C(B))$ are
dominating. Then  so is $d(A\otimes B/C(A\otimes B))$.
\end{lemma}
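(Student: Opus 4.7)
The plan is to reduce the dominating property of $h := d(A\otimes B/C(A\otimes B))$ to the dominating properties of $f := d(A/C(A))$ and $g := d(B/C(B))$, via the multiplicative formula of Lemma~\ref{xxlem5.3}. Write $m = \rk(A/C(A))$ and $n = \rk(B/C(B))$, so $h = f^n g^m$. Let $\{x_1,\dots,x_p\}$ and $\{z_1,\dots,z_q\}$ be generating sets for $A$ and $B$ respectively, so $A\otimes B$ is generated by their union (viewing $x_i$ as $x_i\otimes 1$ and $z_j$ as $1\otimes z_j$). With the standard filtration on $A\otimes B$ assigning degree one to all of these generators, $\gr(A\otimes B)\cong \gr A\otimes \gr B$, which we assume is a connected graded domain; this makes degree additive in $A\otimes B$ and gives $\deg h = n\deg f + m\deg g$.

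To verify Definition~\ref{xxdef2.1}(2) for $h$, fix any filtered PI algebra $T$ with $\gr T$ a connected graded domain, and any set $\{y_1,\dots,y_p,w_1,\dots,w_q\}\subset T$ that is linearly independent in $T/F_0 T$. The subsets $\{y_i\}$ and $\{w_j\}$ are then each linearly independent modulo $F_0 T$. Applying the dominating hypothesis for $f$ to the testing data $(T,\{y_i\})$ produces a lift $F\in k\langle x_1,\dots,x_p\rangle$ of $f$ such that either $F(y)=0$ or $\deg F(y)\geq \deg f$, with strict inequality whenever some $\deg y_{i_0}>1$. Applying the dominating hypothesis for $g$ to $(T,\{w_j\})$ produces an analogous lift $G\in k\langle z_1,\dots,z_q\rangle$ of $g$.

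Set $H := F^n G^m$ in the free algebra $k\langle x_1,\dots,x_p,z_1,\dots,z_q\rangle$. This is a lift of $h$: the surjection onto $A\otimes B$ sends $F\mapsto f\otimes 1$ and $G\mapsto 1\otimes g$, and $(f\otimes 1)^n(1\otimes g)^m = f^n g^m = h$. Evaluating at the testing elements yields $H(y,w) = F(y)^n G(w)^m$ in $T$. If either $F(y)=0$ or $G(w)=0$, then $H(y,w)=0$ and the vanishing alternative of Definition~\ref{xxdef2.1}(2) holds. Otherwise, additivity of degree in $T$ gives
\[
\deg H(y,w) \;=\; n\deg F(y) + m\deg G(w) \;\geq\; n\deg f + m\deg g \;=\; \deg h,
\]
which is condition (a). For condition (b), suppose some element of $\{y_i,w_j\}$ has degree $>1$; by symmetry assume it is some $y_{i_0}$. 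Dominance of $f$ then forces $\deg F(y) > \deg f$, whence $\deg H(y,w) > \deg h$.

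The main obstacle I would expect is not in the degree bookkeeping above but in confirming that $\gr(A\otimes B)\cong\gr A\otimes\gr B$ really is a connected graded domain, since over a general commutative domain $k$ the tensor product of two domains need not be a domain. This prerequisite is needed even for $h$ to qualify as ``dominating'' at all, and it is presumably handled in the surrounding context of Theorem~\ref{xxthm5.5}, for which Lemma~\ref{xxlem5.4} serves as a step.
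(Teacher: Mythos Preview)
Your proof is correct and follows the paper's approach: the paper's own argument simply invokes Lemma~\ref{xxlem5.3} to write $h=f^n g^m$ and then declares that ``it is routine to check'' this product is dominating; you have carried out that routine check carefully, choosing lifts $F,G$ for each testing datum and combining them as $H=F^nG^m$. Your closing remark about needing $\gr A\otimes\gr B$ to be a connected graded domain is well taken---the paper supplies exactly this as an explicit hypothesis in Theorem~\ref{xxthm5.5}.
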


\begin{proof} Since $d(A/C(A))$ is
a dominating element, $A\neq C(A)$ (unless $A=k$), so $m:=\rk(A/C(A))>1$.
Similarly, $n:=\rk(B/C(B))>1$.
By Lemma \ref{xxlem5.3}, the discriminant of $A\otimes B$
over its center is $d(A/C(A))^n d(B/C(B))^m$. By hypothesis,
both $d(A/C(A))$ and $d(B/C(B))$ are dominating, and it is routine to
check that $d(A/C(A))^n d(B/C(B))^m$ is dominating.
\end{proof}

\begin{theorem}
 \label{xxthm5.5} Retain the hypotheses of Lemma \ref{xxlem5.3}.
Suppose that $\gr A\otimes \gr B$ is a connected graded domain.
If $A$ and $B$ are in $\Af$, so is $A\otimes B$.
\end{theorem}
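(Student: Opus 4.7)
The proof amounts to verifying the three conditions in Definition \ref{xxdef2.4} for the tensor product, since each of the ingredients has essentially been prepared by the preceding lemmas. The plan is to check these conditions in order and lean on Lemmas \ref{xxlem5.2}--\ref{xxlem5.4} to do most of the work.

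First, I would equip $A\otimes B$ with the tensor product filtration, $F_n(A\otimes B)=\sum_{i+j=n} F_iA\otimes F_jB$, induced from the standard filtrations on $A$ and $B$ with respect to their generating subspaces $Y_A$ and $Y_B$. The natural generating subspace is $Y:=(Y_A\otimes 1)\oplus(1\otimes Y_B)$, and the associated graded ring is $\gr(A\otimes B)\cong \gr A\otimes \gr B$, which is a connected graded domain by the hypothesis of the theorem. This gives condition (1) of Definition \ref{xxdef2.4}, and also ensures $k$-flatness of $A\otimes B$ since $A$ and $B$ are $k$-flat.

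Next, by Lemma \ref{xxlem5.2} the center of $A\otimes B$ is $C(A)\otimes C(B)$, and by Lemma \ref{xxlem5.3} the module $A\otimes B$ is free over this center of rank $mn$, which yields condition (2). Finally, Lemma \ref{xxlem5.3} also gives the explicit formula
\[
d(A\otimes B/C(A\otimes B))=_{k^\times} d(A/C(A))^n\, d(B/C(B))^m,
\]
and Lemma \ref{xxlem5.4} asserts that this product is a dominating element, which is condition (3).

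The only part requiring any thought is verifying that dominating elements are preserved under the product formula of Lemma \ref{xxlem5.3}, but this is already claimed in Lemma \ref{xxlem5.4}, whose proof I would invoke directly; the key point is that if $f_A\in A$ and $f_B\in B$ are dominating, then, for any testing algebra $T$ in Definition \ref{xxdef2.1}(2) and any linearly independent elements $\{y_i\}\subset T$ corresponding to the generators of $A\otimes B$, splitting the generators into the two groups from $Y_A$ and $Y_B$ lets us apply the dominating property of $f_A$ and $f_B$ separately on each block, and the product of the lifts $f_A^n f_B^m$ then inherits the required strict degree inequality as soon as any single generator has degree $>1$. With all three conditions verified, $A\otimes B\in\Af$, completing the proof.
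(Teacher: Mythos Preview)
Your proof is correct and follows essentially the same route as the paper: the paper's proof is the one-line ``This follows from Lemmas \ref{xxlem5.3} and \ref{xxlem5.4},'' and you have simply unpacked this by explicitly checking conditions (1)--(3) of Definition \ref{xxdef2.4} using those lemmas (together with Lemma \ref{xxlem5.2}, which Lemma \ref{xxlem5.3} already relies on). Your additional sketch of why the product $f_A^n f_B^m$ remains dominating is exactly the ``routine'' check the paper alludes to in the proof of Lemma \ref{xxlem5.4}.
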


\begin{proof}
This follows from Lemmas \ref{xxlem5.3} and \ref{xxlem5.4}.
\end{proof}

Another property of $\Af$ is that, if $A$ is in $\Af$, then so is
the opposite ring of $A$, denoted by $A^{\op}$. We identity $A^{\op}$
with $A$ as a $k$-module, and the multiplication of $A^{\op}$, denoted
by $*$, is defined by
\[
a*b= ba \quad \forall \; a,b\in A.
\]
The regular trace of $A^{\op}$ is denoted by $\tr^{\op}_{\reg}$.
We may also use right multiplication on $A$ to define a right-hand
version of the regular trace, denoted by $\tr'_{\reg}$.

\begin{lemma}
\label{xxlem5.6}
Let $A$ be a PI domain and $F$ be the field of fractions of the center
$C(A)$. Let $\tr: A\to F$ be a trace function.
\begin{enumerate}
 \item
$\tr$ is uniquely determined by $\tr(1)$.
\item
Identifying $A^{\op}$ with $A$ as a $C(A)$-module, then
$\tr_{\reg}=\tr'_{\reg}=\tr^{\op}_{\reg}$.
\end{enumerate}
\end{lemma}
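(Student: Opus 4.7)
The plan is to reduce part (1) to the one-dimensionality of the space of trace functionals on a central simple algebra, and then deduce part (2) by verifying that the three relevant traces all agree at $1$ and all satisfy the trace identity.

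For part (1), since $A$ is a PI domain, Posner's theorem gives that $A_F := A \otimes_{C(A)} F$ is a central simple $F$-algebra. Any trace $\tr: A \to F$ extends uniquely to an $F$-linear map $\tr_F: A_F \to F$ by $\tr_F(a \otimes c^{-1}) = c^{-1}\tr(a)$, and the cyclicity identity $\tr_F(xy) = \tr_F(yx)$ persists. So the question reduces to showing that on a central simple $F$-algebra $B$ the space of $F$-linear traces is one-dimensional; equivalently, $[B,B]$ has codimension one in $B$. This is classical: after extending scalars to an algebraic closure $\bar F$, one has $B \otimes_F \bar F \cong M_n(\bar F)$, and $[M_n(\bar F), M_n(\bar F)]$ is exactly the kernel of the internal matrix trace, hence a hyperplane. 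Faithfully flat descent along $F \hookrightarrow \bar F$ gives the same for $B$. Since $\tr_{\reg}(1)=n$, any $F$-linear trace is a scalar multiple of $\tr_{\reg}$ (under the mild and implicit assumption $n \neq 0$ in $F$), and is therefore pinned down by its value at $1$.

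For part (2), first check that $\tr'_{\reg}$ actually is a trace in the sense of Definition \ref{xxdef1.1}. Letting $r_a : A_F \to A_F$ denote right multiplication by $a$, right multiplication is an anti-homomorphism, so $r_{ab} = r_b r_a$; cyclicity of the internal matrix trace then gives
\[
\tr'_{\reg}(ab) = \tr_{\INT}(r_b r_a) = \tr_{\INT}(r_a r_b) = \tr'_{\reg}(ba).
\]
Next, under the $C(A)$-module identification of $A^{\op}$ with $A$, left multiplication in $A^{\op}$ is literally the same $F$-linear operator on $A_F$ as right multiplication in $A$, so $\tr^{\op}_{\reg} = \tr'_{\reg}$ by definition. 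Finally, both $\tr_{\reg}$ and $\tr'_{\reg}$ send $1$ to $\rk_F(A_F) = n$, since multiplication by $1$ from either side is the identity operator on $A_F$. Part (1) then forces $\tr_{\reg} = \tr'_{\reg} = \tr^{\op}_{\reg}$.

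The main obstacle is the codimension-one statement for $[B,B]$ in a central simple $F$-algebra $B$. It is standard, but genuinely requires Posner's theorem together with a descent argument via scalar extension to an algebraic closure (or, equivalently, the theory of reduced traces). The rest of the argument—the extension of $\tr$ to $A_F$, the anti-homomorphism property of right multiplication, and the evaluation at $1$—is essentially mechanical.
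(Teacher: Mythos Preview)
Your proof is correct and follows essentially the same approach as the paper: extend the trace $F$-linearly to $A_F$, invoke the one-dimensionality of the trace space on a central simple algebra via a splitting-field (or algebraic-closure) argument, and for part (2) observe that all three traces agree at $1$. Your version is in fact more careful than the paper's, since you explicitly verify that $\tr'_{\reg}$ satisfies the trace identity and you flag the implicit assumption that $\rk_F(A_F)\neq 0$ in $F$; the paper's proof leaves both of these points unstated.
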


\begin{proof} (1) Let $R=C(A)$. Then the $R$-linear trace $\tr: A\to F$
can be extended to an $F$-linear trace $\tr: A\otimes_R F\to F$
uniquely. So we may assume that $A$ is a division ring
with center $F$. It is well-known that the trace on a simple
algebra is uniquely determined by $\tr(1)$ (by using a spliting
field). Therefore $\tr$ is uniquely determined by $\tr(1)$ by
restriction.

(2) Since $\tr'_{\reg}(1)=\rk(A/C(A))$, the first equality follows
from part (1). The second equality follows from the definition.
\end{proof}

\begin{proposition}
 \label{xxpro5.7} Let $A$ be in $\Af$ and let $B=A^{\op}$
 \begin{enumerate}
  \item
$d(A/C(A))=d(B/C(B))$.
\item
$B$ is in $\Af$.
\item
Every anti-automorphism $g$ of $A$ is affine, namely, $g(F_1 A)\subset F_1 A$.
\item
Every anti-automorphism $h$ of the polynomial extension $A[t]$
is triangular, namely, there is an anti-automorphism $g$ of $A$,
$c\in k^\times$ and
$r\in R$ such that
\[
h(t)=ct+r \quad {\text{and}}\quad
h(x)=g(x)\in A \quad {\text{for all $x\in A$}}.
\]
 \end{enumerate}
\end{proposition}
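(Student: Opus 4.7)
The strategy is to view an anti-automorphism of $A$ as a $k$-algebra isomorphism $A \to B := A^{\op}$, and then transfer the automorphism-based results of Sections~\ref{xxsec2}--\ref{xxsec3} to algebra isomorphisms between algebras in $\Af$. Part (1) is immediate from Lemma~\ref{xxlem5.6}(2), which identifies the regular traces of $A$ and $B$ under the canonical $k$-module identification. For any $R$-basis $Z = \{z_i\}$ of $A$, the defining matrix of $d(B/R)$ has $(i,j)$-entry
\[
\tr_{\reg}^{\op}(z_i * z_j) = \tr_{\reg}(z_j z_i) = \tr_{\reg}(z_i z_j),
\]
where the last equality is the trace identity $\tr(ab) = \tr(ba)$; hence this matrix coincides with the one defining $d(A/R)$, giving $d(B/R) = d(A/R)$.

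For part (2), I would check the three conditions of Definition~\ref{xxdef2.4} for $B$: the filtration and the center coincide with those of $A$; $\gr B \cong (\gr A)^{\op}$ is again a connected graded domain; $B$ is free of the same rank over the common center $R$; and by (1) the discriminants agree. The only subtle point is confirming that the common element $d := d(A/R)$ remains dominating in $B$. A free-algebra lift of $d$ with respect to $B$'s multiplication is obtained from a lift for $A$ by reversing each monomial, and in any filtered testing algebra $T$ with $\gr T$ a domain the degree of a monomial $y_{i_1}\cdots y_{i_s}$ depends only on the multiset of indices, so the bounds in Definition~\ref{xxdef2.1}(2) are preserved under reversal.

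For part (3), view $g$ as an isomorphism $A \to B$ between two algebras in $\Af$ sharing the same discriminant $d$. The argument of Lemma~\ref{xxlem1.8}(6) extends to any algebra isomorphism between algebras in $\Af$, since the regular trace is functorial under such isomorphisms; hence $g(d) = c\,d$ for some $c \in k^\times$. Applying the dominating property of $d$ with testing algebra $T = B$ and $y_i := g(x_i)$, and using the equality $\deg g(d) = \deg d$, the strict-inequality clause of Definition~\ref{xxdef2.1}(2)(b) is ruled out, forcing $\deg y_i \leq 1$ for each $i$; hence $g(Y) \subset Y \oplus k$, as required.

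Part (4) parallels the proof of Theorem~\ref{xxthm3.5}(1), now with $h$ viewed as an isomorphism $A[t] \to B[t]$. The analog of Lemma~\ref{xxlem3.2}(1), established with the refined filtration $\deg' t = 2,\ \deg' x_i = 1$, yields $h(Y) \subset Y \oplus k$; the same argument applied to $h^{-1}$ gives $h(A) = A$, so $h|_A$ is an anti-automorphism of $A$. Since $t$ is central, $h(t)$ commutes with $h(A[t]) = A[t]$ and thus lies in $C(A[t]) = R[t]$; a $t$-degree comparison in $h^{-1}(h(t)) = t$ (using the anti-automorphism rule) forces the $t$-degree of $h(t)$ to equal $1$ with leading coefficient a unit in $R^\times = k^\times$, giving $h(t) = ct + r$ with $c \in k^\times$ and $r \in R$. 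The main obstacle throughout is verifying that the discriminant-invariance statements (Lemma~\ref{xxlem1.8}(6), Lemma~\ref{xxlem3.2}) and the dominating-element arguments (Theorem~\ref{xxthm2.7}) extend cleanly from automorphisms to algebra isomorphisms between $A$ and $A^{\op}$; once that extension is in place, the remaining steps are direct adaptations of the proofs in Sections~\ref{xxsec2} and \ref{xxsec3}.
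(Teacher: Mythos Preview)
Your proposal is correct and follows essentially the same route as the paper's proof: part~(1) via Lemma~\ref{xxlem5.6}(2), part~(2) by checking Definition~\ref{xxdef2.4} directly, and parts~(3) and~(4) by recasting anti-automorphisms as isomorphisms $A\to A^{\op}$ and rerunning the arguments of Lemmas~\ref{xxlem1.8}, \ref{xxlem2.6}, \ref{xxlem3.2} and Theorem~\ref{xxthm3.5}(1) with testing algebra $T=A^{\op}$ (or $A^{\op}[t]$). Your treatment is in fact more explicit than the paper's, which is quite terse; one small sharpening for part~(2): rather than arguing monomial-by-monomial that degrees survive reversal (which does not by itself rule out new cancellations), it is cleaner to observe that $f^{\op}(y_1,\dots,y_n)$ computed in a testing algebra $T$ is literally $f(y_1,\dots,y_n)$ computed in $T^{\op}$, and $T^{\op}$ is again an admissible testing algebra.
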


\begin{proof}
(1) This follows from Lemma \ref{xxlem5.6}(2) and the definition.

(2) Follows from part (1) and the definition.

(3) Modifying the proof of Lemma \ref{xxlem1.8}, one sees that
$g(\tr_{\reg}(x))=\tr'_{\reg} (g(x))=\tr_{\reg}(g(x))$ where
 the second equality is Lemma \ref{xxlem5.6}(2). By the proof of
 Lemma \ref{xxlem1.8}(3), one sees that $d(A/C(A))$ is
 $g$-invariant up to a scalar in $k^\times$. Modifying
 the proof of Lemma \ref{xxlem2.6} and using the dominating
 element $d(A/C(A))$ and the testing algebra $T=A^{\op}$, one
 can show that $g$ is affine.

(4) Modify the original proof for automorphisms and use an
idea similar to the proof of part (3). Details are omitted.
\end{proof}

Therefore the following algebras are in $\Af$:
\begin{enumerate}
 \item
All $V_n({\mathcal A})$ when $n$ is even [Theorem \ref{xxthm4.9}].
Special cases are $k_{-1}[x_1,\dots,x_n]$ and $W_n$ when $n$ is even.
\item
$A=k\langle x,y \rangle/
(y^2 x-x y^2,y x^2+x^2 y)$ [Example \ref{xxex5.1}].
\item
Any skew polynomial ring $A=k_{p_{ij}}[x_1,\dots,x_n]$ satisfying the
properties that (a) $x_i$ are not central for all $i$ and (b) $A$ is a
finitely generated free module over its center \cite{CPWZ2}.
\item
Quantum Weyl algebras $A_q:=k\langle x,y\rangle/(yx-qxy-1)$
where $q\neq 1$ and $q$ is a root of unity \cite{CPWZ2}.
\item
Any tensor product of the algebras listed above.
\item
Any opposite ring of $A$ in $\Af$ is again in $\Af$.
\end{enumerate}

In Section~\ref{xxsec2} we used standard filtrations in the
definitions of dominating elements and affine automorphisms. In
practice one might have to use non-standard filtrations in order to
determine automorphism groups. Here is an example.

\begin{example}
\label{xxex5.8}
Suppose $2$ is invertible in $k$.
Let $D$ be the fixed subring $k_{-1}[x_1,x_2]^{S_2}$
where the group $S_2$ is generated by the permutation
$\sigma: x_1\leftrightarrow x_2$. Hence $D$ is a graded PI
domain. A presentation of $D$ is given by
\[
D\cong k\langle x,y\rangle/(x^2 y- yx^2, xy^2-y^2 x,
 2x^6-3 x^3 y - 3 y x^3 + 4 y^2)
\]
where $x=x_1+x_2, y=x_1^3+x_2^3$ \cite[Example 3.1]{KKZ}.
Replacing $y$ by $4y-3x^3$, $D$ has a better
presentation
\[
D\cong k\langle x,y\rangle/(x^2 y- yx^2, xy^2-y^2 x,
 x^6-y^2)
\]
which we will use for the rest of this example.  Then $D$ is a
connected graded algebra with $\deg x=1$ and $\deg y=3$. If we use a
standard filtration for any possible generating set $Y$, the
associated graded ring will not be a domain due to the third
relation. Therefore it is not a good idea to use the standard
filtration as we need to use \eqref{2.0.1} in our argument. A
computation shows that the center of $D$ is the polynomial ring
generated by $x^2$ and $z:=xy+yx$, and the discriminant of $d(D/C(D))$
is $f:=(xy-yx)^4$. Using the relations of $D$, one has
\[
f=((xy-yx)^2)^2=((xy+yx)^2-4x^2y^2)^2=(z^2-4x^8)^2=
(z-2x^4)^2(z+2x^4)^2.
\]
Let $g$ be any automorphism of $D$. By Lemma \ref{xxlem1.8}(6),
$g(f)=cf$ for some $c\in k^\times$. Since the polynomial ring
$k[x^2,z]$ is a unique factorization domain, we have
\[
\begin{cases}
g(z-2x^4) = a (z-2 x^4)&\\
g(z+2x^4) = b (z+2 x^4)&
\end{cases} \quad {\text{or}}\quad
\begin{cases}
g(z-2x^4) = a (z+2 x^4)&\\
g(z+2x^4) = b (z-2 x^4)&
\end{cases}
\]
for some $a,b\in k^\times$. Hence $g(z\pm 2x^4)$ has degree 4.
Consequently, $g(x^4)$ has degree (at most) 4,
which implies that $g(x)$ has degree 1.
By the third relation of $D$,
$g(y)$  has degree 3. From this it is easy to check that
\[
\begin{cases}
g(x) = a x&\\
g(y) = a^3 y&
\end{cases} \quad {\text{or}}\quad
\begin{cases}
g(x) = a x&\\
g(y) =- a^3 y&
\end{cases}
\]
for some $a\in k^\times$. Therefore
$\Aut(D)=k^\times \rtimes S_2$.

We could modify the definition of $\Af$ so that $D$ is
in the category $\Af$, but the definition would be more
complicated in order to keep the tensor product
property [Theorem \ref{xxthm5.5}]. At this point we
would like to treat $D$ separately.  We have checked that
all conclusions of Theorem \ref{xxthm0.3} hold for $D$.

Note that $k_{-1}[x_1,x_2]$ is in $\Af$ and
$D=k_{-1}[x_1,x_2]^{S_2}$. We may ask the following question:
does $k_{-1}[x_1,\dots,x_{2m}]^{S_{2m}}$ have an
``affine'' automorphism group for all $m\geq 2$?
\end{example}

\begin{example}
\label{xxex5.9}
Let $\ell\geq 3$ and $q$ be a primitive $\ell$th root of unity.
Let $A$ be the algebra $(k_q[x_1,x_2])[x_3]$.  Then $A$ is a connected graded
domain with $\deg (x_i)=1$ for $i=1,2,3$. Since $x_3$ is central,
it is not hard to check that the center of $A$ is
$R=k[x_1^{\ell},x_2^{\ell},x_3]$. Hence $A$ is finitely generated
free over its center with an $R$-basis $\{x_1^a x_2^b \mid 0\leq a,b
\leq \ell-1\}$. Therefore (1) and (2) of Definition \ref{xxdef2.4} hold.
By a computation, the discriminant $d(A/R)$ is equal to
$(x_1 x_2)^{\ell^2 (\ell-1)}$, which is not dominating. Therefore
(3) of Definition \ref{xxdef2.4} fails. With some
effort, one can show that every automorphism $g$ of $A$ is of the form
\[
g(x_i)=\begin{cases} a_1 x_1 & i=1, \\
a_2 x_2 &i=2, \\ a_3 x_3+f(x_1^\ell,x_2^\ell)& i=3,\end{cases}
\]
where $a_i\in k^\times$ and $f$ is a polynomial of two variables, and every
locally nilpotent derivation $\partial$ of $A$ is of the form
\[
\partial(x_i)=\begin{cases}\quad 0\quad &\quad i=1, \\
\quad0 \quad & \quad i=2, \\\quad
f(x_1^\ell,x_2^\ell) \quad &\quad i=3.\end{cases}
\]

\end{example}

By Theorem \ref{xxthm0.3}(4), if $k$ is a field, then
$\Aut: A\mapsto \Aut(A)$ defines a functor from $\Af$ to the
category of algebraic groups over $k$. There are some interesting questions
about this functor. It is well-known that the symmetry index $si$
(defined after Theorem~\ref{xxthm0.3})
is neither additive nor multiplicative. For example,
if $A$ and $A^{\otimes n}$ are both in $\Af$, then
$si(A^{\otimes n})\geq n! (si(A))^n$. What about the
symmetry rank?

\begin{question}
 \label{xxque5.10} Let $k$ be a field and let $A$ and $B$ be in $\Af$.
 Is $sr(A\otimes B)
 =sr(A)+sr(B)$?
\end{question}

\begin{remark}
 \label{xxrem5.11} In \cite{CPWZ2} we use the discriminant to propose another
 category $\Af_{-1}$ that has the following properties:
 \begin{enumerate}
  \item
  If $A$ is in $\Af$, then the polynomial extension $A[t]$ is
  in $\Af_{-1}$ (and there are many other algebras in $\Af_{-1}$),
  \item
  If $B$ is in $\Af_{-1}$, then $\Aut(B)$ is tame.
 \end{enumerate}
Therefore the automorphism groups of the algebras in $\Af_{-1}$
can be understood (in theory).
\end{remark}

We now consider $W_n=k\langle x_1, \dots, x_n \rangle /
(x_ix_j+x_jx_i-1, \, \forall \, i\neq j)$ again, when $n$ is odd or
$\ch k=2$.

\begin{example}\label{xxex5.12}
Consider the standard filtration of $W_n$  defined
by $Y=\bigoplus_{i=1}^n kx_i$. As stated in Theorem \ref{xxthm0.1},
if $n$ is even and $\ch k\neq 2$, then every automorphism of
$W_n$ is affine.  Here are some examples of non-affine automorphisms
in other cases.
\begin{enumerate}
\item
If $\ch k=2$, then for any nonzero polynomial
$f(t_1,\dots,t_{n-1})$, the following determines a non-affine
algebra automorphism of $W_n$:
\[
x_i \mapsto
\begin{cases}
x_i & \text{if $i<n$}, \\
x_n+f(x_1^2,\dots,x_{n-1}^2) & \text{if $i=n$}.
\end{cases}
\]
The associated locally nilpotent derivation is determined by
\[
x_i \mapsto
\begin{cases}
0& \qquad \text{if $i<n$}, \\
f(x_1^2,\dots,x_{n-1}^2) &\qquad  \text{if $i=n$}.
\end{cases}
\]
\item
As in ~\eqref{eqn-omega}, define
\[
\Omega(x_1,x_2,\dots, x_n)=\sum_{\sigma\in S_n}
(-1)^{|\sigma|} x_{\sigma(1)}\cdots x_{\sigma(n)}.
\]
Then we claim that $x_i
\Omega(x_1,\dots,x_{2m})=-\Omega(x_1,\dots,x_{2m})x_i$
for all $i=1,2,\dots, 2m$: see Lemma \ref{xxlem5.13} below.
Given this, if $n$ is odd, say $n=2m+1$, then
for any nonzero polynomial $f(t_1,\dots,t_{2m})$,
the following determines
a non-affine algebra automorphism $\sigma$ of $W_n$:
\[
x_i \mapsto
\begin{cases}
x_i & \text{if $i<2m+1$}, \\
x_{2m+1}+f(x_1^2,\dots,x_{2m}^2)
\Omega(x_1,\dots,x_{2m}) & \text{if $i=2m+1$.}
\end{cases}
\]
The associated locally nilpotent derivation $\partial$ is determined by
\[
x_i \mapsto
\begin{cases}
\quad 0 \quad & \qquad \text{if $i<2m+1$}, \\
\quad f(x_1^2,\dots,x_{2m}^2)
\Omega(x_1,\dots,x_{2m}) \quad & \qquad \text{if $i=2m+1$},
\end{cases}
\]
and $\sigma=\exp(\partial)$.
\end{enumerate}
The automorphisms in (1) and (2) are examples of
\emph{elementary} automorphisms -- see \cite{SU}.
\end{example}

\begin{lemma}
\label{xxlem5.13} Let $W_n$ and $\Omega_{n} := \Omega(x_1,\dots,x_n)$
be defined as in Example \textup{\ref{xxex5.12}}.
Then $x_i \Omega_n=(-1)^{n-1}\Omega_n x_i$
for all $i=1,2,\dots, n$.
\end{lemma}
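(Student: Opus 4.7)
The plan is to prove the identity by induction on $n$, with the trivial base case $n = 1$. For the inductive step, I would expand $\Omega_n$ via the Laplace-type formula
\[
\Omega_n = \sum_{k=1}^n (-1)^{k-1} x_k \, \Omega_n^{(k)}, \quad \text{where } \Omega_n^{(k)} := \Omega(x_1, \dots, \widehat{x_k}, \dots, x_n),
\]
obtained by grouping the terms of \eqref{eqn-omega} by their leading factor (the analogous expansion for the final factor also holds, and is obtained symmetrically).

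To compute $x_i \Omega_n$, the key structural point is that $x_i^2$ is central in $W_n$, a direct consequence of $x_i x_j + x_j x_i = 1$ for $i \neq j$. This makes the $k = i$ summand $(-1)^{i-1} x_i^2 \Omega_n^{(i)}$ well-behaved. For $k \neq i$, I would use the relation $x_i x_k = 1 - x_k x_i$ together with the inductive hypothesis $x_i \Omega_n^{(k)} = (-1)^{n-2} \Omega_n^{(k)} x_i$, which is available because $i$ lies in the $(n-1)$-element index set of $\Omega_n^{(k)}$. After recombining the resulting Laplace-type sum to reassemble $\Omega_n x_i$, the desired identity reduces to showing
\[
(-1)^{i-1}\bigl[\, x_i^2 \Omega_n^{(i)} + (-1)^n x_i \Omega_n^{(i)} x_i \,\bigr] + \sum_{k \neq i} (-1)^{k-1} \Omega_n^{(k)} = 0.
\]

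The bracket is delicate because $i$ is absent from the index set of $\Omega_n^{(i)}$, so the inductive hypothesis does not apply directly. To handle it, I would establish a separate, non-inductive identity — proved by iteratively applying $x_i x_j = -x_j x_i + 1$ — valid for any subset $J \subseteq \{1,\dots,n\}$ with $i \notin J$:
\[
x_i \Omega_J = (-1)^{|J|} \Omega_J x_i + |J| \sum_{p=1}^{|J|}(-1)^{p-1} \Omega_{J \setminus \{j_p\}}.
\]
Substituting this into the bracket and exploiting $(-1)^{n-1} + (-1)^n = 0$, the $x_i \Omega_n^{(i)} x_i$ piece collapses, leaving a signed sum of $x_i \Omega_{J'}$ terms with $|J'| = n-2$. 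The main obstacle will be verifying that these terms cross-match exactly with the Laplace expansions (pulling $x_i$ out of the front) of each $\Omega_n^{(k)}$ in the residual sum, so that everything cancels pairwise. I have verified the full cancellation by hand for $n = 2$ and $n = 3$, where the final collapse invokes the quadratic relation $x_i x_j + x_j x_i = 1$ once more; the general pattern should then follow by careful sign and indexing bookkeeping, with the inductive step applicable because the subalgebra of $W_n$ generated by any subset of the $x_j$ is canonically isomorphic to $W_{|J|}$.
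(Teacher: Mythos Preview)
Your setup is on the right track and largely matches the paper's: induction on $n$, the Laplace expansion
$\Omega_n=\sum_k(-1)^{k-1}x_k\Omega_n^{(k)}$, the centrality of $x_i^2$, and the inductive
hypothesis applied to $\Omega_n^{(k)}$ for $k\neq i$.  Your reduction of
$x_i\Omega_n-(-1)^{n-1}\Omega_n x_i$ to the expression
\[
E=(-1)^{i-1}\bigl[\,x_i^2\Omega_n^{(i)}+(-1)^n x_i\Omega_n^{(i)}x_i\,\bigr]
+\sum_{k\neq i}(-1)^{k-1}\Omega_n^{(k)}
\]
is correct, and the auxiliary identity
$x_i\Omega_J=(-1)^{|J|}\Omega_J x_i+|J|\sum_p(-1)^{p-1}\Omega_{J\setminus\{j_p\}}$
for $i\notin J$ is also correct.

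The gap is in the final step.  After you substitute the auxiliary identity, the bracket
contributes $(n-1)\,x_i\sum_p(-1)^{p-1}\Omega_{J\setminus\{j_p\}}$, carrying a
coefficient $(n-1)$.  Your plan is to cancel this against the residual sum
$\sum_{k\neq i}(-1)^{k-1}\Omega_n^{(k)}$ by ``pulling $x_i$ out of the front'' of each
$\Omega_n^{(k)}$ via Laplace expansion.  But that Laplace expansion produces only \emph{one}
term with $x_i$ in front for each $k$, so after the first pass you are left with
$(n-2)\,x_i\sum_p(\cdots)$ together with a new batch of terms $x_j\Omega_{J'}$ with $j\neq i$.
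This is not a pairwise cross-match; the coefficients do not line up, and iterating the
procedure spawns further terms rather than terminating.  Your hand checks for $n=2,3$ succeed
only because $n-2\le 1$ there; for $n\ge 4$ the cancellation is not the routine bookkeeping
you describe.

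The paper avoids this difficulty entirely.  Rather than introducing a commutation formula
for $x_i$ with an $\Omega_J$ when $i\notin J$, it uses the \emph{back} Laplace expansion
$\Omega_n=\sum_k(-1)^{n-k}\Omega_n^{(k)}x_k$
(which you mention but never deploy) to compute the same commutator a second time.
The same manipulations---the centrality of $x_i^2$, the relation $x_kx_i=1-x_ix_k$, and the
inductive hypothesis---now yield $x_i\Omega_n-(-1)^{n-1}\Omega_n x_i=-E$.  Comparing with the
front-expansion result $E$ gives $2E=0$, hence $E=0$ after reducing to $k=\mathbb{Z}$.  So the
identity you were struggling to verify directly follows in one line from symmetry between
the two expansions.
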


\begin{proof}
It is easy to reduce to the case when
$k={\mathbb Z}$.

We proceed by induction. It is easy to check that the
assertion holds when $n=2$ by using the fact that
$x_i^2$ is central. Now assume the assertion holds
for $n-1\geq 2$ and we want to show that it holds for $n$.
Note that, for every $\sigma\in S_n$,
$\Omega(x_{\sigma(1)},\dots, x_{\sigma(n)})=(-1)^{|\sigma|}
\Omega(x_1,\dots,x_n)$. By symmetry, it suffices to show
that $x_1 \Omega_n =(-1)^{n-1} \Omega_n x_1$.
The argument below is dependent on the parity of $n$, and we only give
a proof when $n$ is odd. The proof when $n$ is even is very
similar, and we omit it. Since $n$ is odd, it suffices to show that
$x_1\Omega_n-\Omega_n x_1=0$. We compute
 $x_1\Omega_n-\Omega_n x_1$ in two different ways.

It follows from the definition that
$\Omega_n=\sum_{i=1}^n (-1)^{i-1} x_i
\Omega(x_1,\dots,\widehat{x}_i,\dots,x_n)$.
Then, by using the induction hypothesis,
\[
\begin{aligned}
x_1\Omega_n&-\Omega_n x_1\\
&=x_1(x_1
\Omega(\widehat{x}_1,x_2,,\dots,x_n))-x_1\Omega(\widehat{x}_1,x_2,,\dots,x_n)x_1\\
&\quad +\sum_{i\geq 2} (-1)^{i-1} [x_1 x_i
\Omega(x_1,\dots,\widehat{x}_i,\dots,x_n)-x_i
\Omega(x_1,\dots,\widehat{x}_i,\dots,x_n)x_1]\\
&=x_1^2
\Omega(\widehat{x}_1,x_2,,\dots,x_n)-x_1\Omega(\widehat{x}_1,x_2,,\dots,x_n)x_1\\
&\quad +\sum_{i\geq 2} (-1)^{i-1} [x_1 x_i
\Omega(x_1,\dots,\widehat{x}_i,\dots,x_n)+x_ix_1
\Omega(x_1,\dots,\widehat{x}_i,\dots,x_n)]\\
&=x_1^2
\Omega(\widehat{x}_1,x_2,,\dots,x_n)-x_1\Omega(\widehat{x}_1,x_2,,\dots,x_n)x_1\\
&\quad +\sum_{i\geq 2} (-1)^{i-1}
\Omega(x_1,\dots,\widehat{x}_i,\dots,x_n).
\end{aligned}
\]
On the other hand,
\[
\begin{aligned}
\Omega_n&=\sum_{i=1}^n (-1)^{i-n} \Omega(x_1,\dots,\widehat{x}_i,\dots,x_n) x_i\\
&=\sum_{i=1}^n (-1)^{i-1} \Omega(x_1,\dots,\widehat{x}_i,\dots,x_n) x_i
\end{aligned}
\]
as $n$ is odd. So we have
\[
\begin{aligned}
x_1\Omega_n&-\Omega_n x_1\\
&=x_1(\Omega(\widehat{x}_1,x_2,,\dots,x_n)x_1)-\Omega(\widehat{x}_1,x_2,,\dots,x_n)x_1^2\\
&\quad +\sum_{i\geq 2} (-1)^{i-1} [x_1
\Omega(x_1,\dots,\widehat{x}_i,\dots,x_n)x_i-
\Omega(x_1,\dots,\widehat{x}_i,\dots,x_n)x_ix_1]\\
&=-x_1^2
\Omega(\widehat{x}_1,x_2,,\dots,x_n)+x_1\Omega(\widehat{x}_1,x_2,,\dots,x_n)x_1\\
&\quad +\sum_{i\geq 2} (-1)^{i-1} [-
\Omega(x_1,\dots,\widehat{x}_i,\dots,x_n)x_1 x_i-
\Omega(x_1,\dots,\widehat{x}_i,\dots,x_n)x_ix_1]\\
&=-x_1^2
\Omega(\widehat{x}_1,x_2,,\dots,x_n)+x_1\Omega(\widehat{x}_1,x_2,,\dots,x_n)x_1\\
&\quad +\sum_{i\geq 2} (-1)^{i}
\Omega(x_1,\dots,\widehat{x}_i,\dots,x_n)\\
&=-(x_1\Omega_n-\Omega_n x_1).
\end{aligned}
\]
Since $2\neq 0$ in ${\mathbb Z}$, $x_1\Omega_n-\Omega_n x_1=0$ as required.
\end{proof}

\begin{remark}
\label{xxrem5.14}
By the previous example, when $n$ is odd or when $\ch k = 2$,
there are non-affine automorphisms. Thus the automorphism group looks
complicated. Also, it appears that the discriminant does not provide
useful information in either case: a (nontrivial) computation shows
that the discriminant ideal of $W_3$ over its center contains 1, and
hence it is trivial.  We
conjecture that this holds for any odd integer $n\geq 3$.  We also
note when $n$ is odd, the center $R$ contains $\Omega(x_{1}, \dots, x_{n})$, so
$W_{n}$ is not free over $R$.  When $\ch k = 2$, Lemma~\ref{xxlem4.5}(1)
says that $\tr (1)=0$ in $k$, and computer calculations suggest that
the discriminant is zero (whence trivial)  in general.  (For more evidence, see
Remark~\ref{xxrem4.7} -- some of these computations remain valid in
characteristic 2.) In conclusion, new invariants are needed to understand
(or control) $\Aut(W_n)$ when $n$ is odd or when $\ch k = 2$.
\end{remark}

We conclude this paper with the following question.

\begin{question}
\label{xxque5.15} If $n$ is odd and/or $\ch k=2$, what is the group
$\Aut(W_n)$?
\end{question}

\subsection*{Acknowledgments}
The authors would like to thank   Ken Goodearl, Colin Ingalls,
Rajesh Kulkarni, and Milen Yakimov for several conversations on this
topic during the Banff workshop in October 2012 and the NAGRT
program at MSRI in the Spring of 2013. S. Ceken was supported by the
Scientific and Technological Research Council of Turkey (TUBITAK),
Science Fellowships and Grant Programmes Department (Programme no.
2214). Y.H. Wang was supported by the Natural Science Foundation of
China (grant no. 10901098, 11271239). J. J. Zhang was supported by
the US National Science Foundation (NSF grant No. DMS 0855743).

\providecommand{\bysame}{\leavevmode\hbox to3em{\hrulefill}\thinspace}
\providecommand{\MR}{\relax\ifhmode\unskip\space\fi MR }
\providecommand{\MRhref}[2]{%

\href{http://www.ams.org/mathscinet-getitem?mr=#1}{#2} }
\providecommand{\href}[2]{#2}

\end{document}